\documentclass[12pt]{article}
\title{Borel complexity of modules}
\author{ Michael C.\
Laskowski  
and Danielle S.\ Ulrich
 \thanks{Both authors partially supported
by NSF grants DMS-1855789 and DMS-2154101.}
\\
Department of Mathematics\\University of Maryland
}

\usepackage{amsmath, amssymb, amsthm, enumerate, fullpage, bm}
\usepackage{verbatim, enumitem, bbm, etoolbox}
\usepackage{latexsym}
\def\bp{\par{\bf Proof.}$\ \ $}

\def\includeE#1{{\lhook\kern-3.5pt\joinrel\smash{
    \mathop{\longrightarrow}\limits^{#1}}}}

\def\efor/{Example~\ref{E4}}

\def\BL/{Baldwin--Lachlan}
\def\Bu/{Buechler}
\def\Hr/{Hrushovski}
\def\lm/{locally modular}
\def\wm/{weakly minimal}
\def\nm/{non--modular}
\def\ss/{superstable}
\def\ud/{unidimensional}
\def\sm/{strongly minimal}
\def\E{{\cal E}}

\def\abar{\overline{a}}

\def\cbar{\overline{c}}

\def\hbar{\overline{h}}

\def\xbar{\overline{x}}

\def\Mbar{\overline{M}}

\def\cl{{\rm cl}}

\def\tr/{trivial}
\def\nt/{non--trivial}
\def\st/{strong type}

\def\abar{\bar{a}}

\def\cbar{\bar{c}}

\def\phi{\varphi}

\def\FF{{\bf F}}

\def\Z{{\mathbb Z}}

\def\Fa0{{\FF^a_{\aleph_0}}}

\def\bp{{\bf Proof.}\quad}

\def\<{\langle}
\def\>{\rangle}

\def\FF{{\mathbb F}}
\def\pp{{\mathfrak p}}
\def\K{{\bf K}}
\def\aa{{\bf a}}
\def\bb{{\bf b}}
\def\cc{{\bf c}}
\def\dd{{\bf d}}
\def\ee{{\bf e}}
\def\gg{{\bf g}}
\def\jj{{\bf j}}
\def\Rhat{\hat{R}}
\def\eps{\epsilon}
\def\m{{\mathfrak m}}

\def\O{{\cal O}}
\def\Vbar{\overline{V}}
\def\Mod{\mbox{Mod}}
\def\L{{\mathcal L}}

\newtheorem{Theorem}{Theorem}[section]
\newtheorem{Proposition}[Theorem]{Proposition}
\newtheorem{Definition}[Theorem]{Definition}

\newtheorem{Remark}[Theorem]{Remark}

\newtheorem{Lemma}[Theorem]{Lemma}
\newtheorem{Corollary}[Theorem]{Corollary}

\newtheorem{Fact}[Theorem]{Fact}

\begin{document}

\date{\today}

\maketitle

\begin{abstract}  We prove that for a countable, commutative ring $R$, the class of countable $R$-modules either has only countably many isomorphism types, or else it is Borel complete.
The machinery gives a succinct proof of the Borel completeness of \mbox{TFAB}, the class of torsion-free abelian groups.  We also prove that for any countable ring $R$, both the class 
of left $R$-modules
endowed
with an endomorphism and the class of left $R$-modules with four named submodules are Borel complete.
\end{abstract}

\section{Introduction}  The notion of {\em Borel reducibility} was introduced by Friedman and Stanley~\cite{FS} to measure the complexity of 
quotients of Polish spaces by natural equivalence relations.  In model theory, the natural context is the Polish space $Str_\L$ of all structures with universe
$\omega$ in a countable language $\L$.  With the natural topology on $Str_\L$, for any $\L$-theory $T \in \mathcal{L}_{\omega_1 \omega}$, the space $\mbox{Mod}(T)$ of models of $T$ forms a Borel subset of $Str_\L$ and the equivalence relation of isomorphism, considered as a subset of $\Mod(T)^2$, is analytic.
Given an $L$-theory $T$ and an $\L'$-theory $S$, Friedman and Stanley say {\em $\Mod(T)$ is Borel reducible to $Mod(S)$,}
written $T\le_B S$, if there is a Borel function $f:\Mod(T)\rightarrow \Mod(S)$ such that for all models $M,N\models T$, $f(M)\cong_{\L'} f(N)$ if and only if $M\cong_\L N$.
Say the theories $T,S$ are {\em Borel equivalent} if both $T\le_B S$ and $S\le_B T$.  Thus, we get a notion of {\em Borel cardinality} comparing different classes of isomorphism types of countable structures.
Friedman and Stanley proved that there is a maximal class of theories with respect to Borel reducibility.  Theories in this maximal class are called {\em Borel complete}.
It is easily seen that if $T$ is Borel complete, then there are $2^{\aleph_0}$ isomorphism types of countable models of $T$. But, in general, Borel cardinality is a finer measure of complexity
than counting isomorphism types.  There are many theories with $2^{\aleph_0}$ non-isomorphic countable models that are Borel incomparable, but our main theorem here says that this distinction does not occur among classes of $R$-modules for any countable, commutative ring $R$.

%
%


In this paper, we investigate the Borel cardinality of theories of  left $R$-modules as we vary the countable ring $R$. We will only ever consider rings with unit, and when $R$ is noncommutative we will only ever consider left $R$-modules. We obtain results for several non-commutative rings, but our main theorem completely solves the problem in the commutative case (in which case the left versus right distinctions vanish). The following was already known, see e.g., Chapter 11 of \cite{Prest}.

\begin{Theorem}  \label{classical}
	Let $R$ be any countable, commutative ring. Then the following are equivalent:
	
	\begin{enumerate}
		\item $R$ is an Artinian principal ideal ring;
		\item $R$ is of finite representation type (every module is a direct sum of indecomposable modules, and there are only finitely many indecomposable modules up to isomorphism);
		\item For every $R$-module $M$, $\mbox{Th}(M)$ is $\omega$-stable;
		\item There are fewer than continuum many isomorphism classes of countable $R$-modules;
		\item There are only countably many isomorphism classes of countable $R$-modules.
	\end{enumerate}
\end{Theorem}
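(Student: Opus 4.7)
The plan is to prove the cycle $(1)\Rightarrow(2)\Rightarrow(5)\Rightarrow(4)\Rightarrow(3)\Rightarrow(1)$. Most of these implications are either easy counting or invocations of standard results from the algebraic and model-theoretic literature, in particular Prest's book; the main content lies in the final implication.

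For $(1)\Rightarrow(2)$, the plan is to invoke the structure theorem for commutative Artinian rings: $R$ decomposes as a finite product $R_1\times\cdots\times R_k$ of local Artinian principal ideal rings. Over each local factor $R_i$ with maximal ideal $\pp_i$, the indecomposable modules are exactly the cyclic quotients $R_i/\pp_i^{\,j}$ with $0<j\leq n_i$, where $n_i$ is the nilpotency index of $\pp_i$, and every module is a direct sum of such indecomposables. Gluing across the factors yields finite representation type for $R$. Then $(2)\Rightarrow(5)$ is a counting argument: once the finite list of indecomposables is fixed, every countable $R$-module is determined up to isomorphism by its tuple of multiplicities in $\{0,1,\ldots,\aleph_0\}$, giving countably many isomorphism types in total. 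The implication $(5)\Rightarrow(4)$ is immediate.

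For $(4)\Rightarrow(3)$, I would argue the contrapositive. By a classical theorem of Fisher, Baur, and Monk, the complete theory of any module is stable. Thus if $\mbox{Th}(M)$ failed to be $\omega$-stable, it would be strictly stable, and the standard spectrum results for strictly stable theories provide $2^{\aleph_0}$ non-isomorphic countable models, contradicting $(4)$.

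The real obstacle is $(3)\Rightarrow(1)$. The central tool is Garavaglia's criterion: $\mbox{Th}(M)$ is $\omega$-stable (equivalently, totally transcendental) if and only if $M$ has the descending chain condition on pp-definable subgroups. Applying $(3)$ to carefully chosen modules converts this into ring-theoretic chain conditions on $R$. Taking $M=R$ as a module over itself gives DCC on principal ideals and on annihilators; taking $M=\bigoplus_n R/I_n$ for a purported ascending chain of ideals rules out such chains, forcing $R$ to be both Noetherian and Artinian; and a finer analysis of the pp-subgroups of modules of the form $R/I$ forces every ideal to be principal in the local components. Rather than reprove these structural results from scratch, my proposal is to follow the treatment in Chapter 11 of Prest.
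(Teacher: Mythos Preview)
The paper does not actually prove this theorem; it simply records it as already known, with a pointer to Chapter~11 of Prest. Your proposal ultimately does the same, so at the level of approach you and the paper are aligned.

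One caution on your sketch of $(4)\Rightarrow(3)$: the phrase ``strictly stable'' normally means stable but not superstable, and it is only for that class that Shelah's spectrum theorem automatically yields $2^{\aleph_0}$ countable models. A theory that is superstable but not $\omega$-stable (and such theories of modules exist, e.g.\ $\mathrm{Th}(\mathbb{Z},+)$) is not covered by that general argument as you have phrased it. The implication $\neg(3)\Rightarrow\neg(4)$ does hold, but it requires the module-specific machinery---pp-definable subgroups and the representation-type dichotomy---rather than a blanket appeal to spectrum results for stable theories. Since you are deferring to Prest anyway, this is precisely the place where that deferral is doing real work rather than being a formality.
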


Our main theorem adds the failure of Borel completeness to the above list.

\begin{Theorem} \label{big} Let $R$ be any countable, commutative ring (with unit).  Then either the theory of $R$-modules is Borel complete, or
	else there are only countably many isomorphism types of countable $R$-modules.
\end{Theorem}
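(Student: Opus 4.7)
The plan is to prove the nontrivial direction of the dichotomy: if $R$ is a countable commutative ring with uncountably many isomorphism types of countable modules, then $\mathrm{Mod}(R)$ is Borel complete. By Theorem \ref{classical}, the hypothesis is equivalent to $R$ failing to be an Artinian principal ideal ring, so I need to produce a Borel reduction from a known Borel complete source -- for instance countable graphs -- into $\mathrm{Mod}(R)$ under that purely algebraic hypothesis on $R$.

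The next step is a structural case analysis on the failure of Artinian-PIR. There are two broad regimes: either (A) $R$ is not Artinian, i.e.\ there exists an infinite strictly descending chain $I_0 \supsetneqq I_1 \supsetneqq \dots$ of ideals, or (B) $R$ is Artinian but some local factor has a non-principal maximal ideal. Case (A) is most transparent when $R$ contains a regular element that is neither a unit nor nilpotent, since then a subring behaving like $\Z$ sits inside $R$ and a TFAB-style construction can be pulled back; specialising to $R = \Z$ would recover the succinct proof of Borel completeness of TFAB promised in the abstract. In case (B), $R$ has elements $x,y$ of some maximal ideal $\m$ that are linearly independent modulo $\m^2$, so one can reduce to a quotient $R/(\m^2+W)$ for suitable $W$, a length-two local ring whose module category is of wild representation type by classical results. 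In both cases the intended strategy is to transport a universal wild input (a graph, or a pair of $k$-linear maps) along a functorial construction which is Borel on codes.

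The main obstacle is case (A) when no regular non-nilpotent non-unit is available -- e.g., in a ring like $k[x_1,x_2,\dots]/(x_i x_j : i,j \in \omega)$ where every non-unit is a zerodivisor. The TFAB template breaks down, and I would instead exploit the descending chain of ideals directly, assigning to each countable structure $X$ an $R$-module built out of quotients $R/I_n^X$ along a chain selected Borel-uniformly in $X$. The delicate step is ensuring isomorphism of the resulting modules reflects isomorphism of the inputs $X$. I expect to invoke the auxiliary Borel-completeness results advertised in the abstract -- modules endowed with an endomorphism, or modules with four named submodules -- as black boxes, reducing matters to the purely module-theoretic task of simulating such extra unary structure inside plain $R$-modules. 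That uniform simulation, across every countable commutative non-Artinian-PIR $R$, is what I expect to constitute the core technical content.
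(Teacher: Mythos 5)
Your high-level framing is right — you want the contrapositive, you invoke Theorem~\ref{classical} to translate the hypothesis into "not Artinian principal ideal ring," and you correctly sense that the core content is a family of Borel reductions into plain $R$-modules, triggered by different algebraic defects of $R$. But the proposal stops exactly where the proof has to begin, and the parts you do sketch have real problems.

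First, the case split you propose (non-Artinian vs.\ Artinian-but-non-PIR) does not line up with any reduction you can actually carry out. The paper's split is (i) infinitely many maximal ideals, handled by a clever ultrafilter argument that eventually funnels into either a descending chain of annihilator ideals or an identified non-maximal prime; (ii) finitely many maximal ideals, where one quotients to a finite product of local rings and then treats a single local ring, showing the ideals are linearly ordered, $\m$ is principal, etc. That local analysis is where the Artinian-PIR conclusion actually falls out. Your "length-two local ring with wild representation type" idea has two gaps: you need to quotient down to a commutative ring whose modules are still hard, but passing from "wild" (a finite-dimensional representation-theoretic notion) to "Borel complete for countable modules" is not automatic and is not how the paper argues; and you have not checked that your $x,y$ independent modulo $\m^2$ actually satisfy the hypotheses ($(x)\cap(y)=0$ and $1\notin\mathrm{Ann}(x)+\mathrm{Ann}(y)$) that a reduction like Theorem~\ref{B} needs.

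Second, and more seriously, invoking "$R$-modules with an endomorphism" or "$R$-modules with four named submodules" as black boxes is circular in spirit. Those corollaries are \emph{derived} from the main hard reduction (Theorem~\ref{A}, via $R[x]$): they cannot serve as a starting point for producing a Borel reduction into plain $R$-modules, because the additional unary structure is precisely what makes them easy. The "purely module-theoretic task of simulating such extra unary structure inside plain $R$-modules" that you defer to is exactly Theorems~\ref{A}, \ref{B}, \ref{C}, each of which requires a substantial inverse-limit / $\hat R$-completion / generic-parameter construction. The paper also needs a separate nontrivial ingredient you do not mention at all: that \emph{tagged} $R$-modules are Borel complete for every countable ring (Theorem~\ref{taggedBC}), and then Borel reductions from tagged modules down to plain ones \emph{contingent on $R$ having a specific defect}. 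Since tagged $R$-modules are Borel complete even when $R$ is a field, the "simulation" you gesture at cannot exist uniformly; pinning down when it does exist is the whole theorem, and your outline leaves that blank.
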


\medskip

The proof of Theorem~\ref{big} splits into two disjoint pieces.  The first, which does not require $R$ to be commutative,
concerns classes of {\em tagged  $R$-modules}.
 This concept was introduced by G\"obel and Shelah in \cite{GS} under a different name.  

\begin{Definition} \label{above} {\em  Let $R$ be any countable ring (not necessarily commutative).  A {\em tagged (left)  $R$-module} is a left $R$-module $M$, expanded by adding unary predicates for 
countably many left $R$-submodules of $M$.
}
\end{Definition}  

In Section~\ref{taggedsection} we prove 

\begin{Theorem} \label{taggedBC}  For any countable ring $R$ (not necessarily commutative) the theory of tagged  $R$-modules is Borel complete.
\end{Theorem}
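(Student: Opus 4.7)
The plan is to construct an explicit Borel reduction from a known Borel-complete class to the class of tagged left $R$-modules. A convenient source is the class of countable graphs $G=(V,E)$ with $V=\omega$, which is Borel complete by Friedman--Stanley.

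Given $G$, I would define $\Phi(G)=(M(G);P_0,P_1,\ldots)$ by taking $M(G)$ to be the free left $R$-module on basis $\{x_v:v\in V\}\cup\{y_e:e\in E\}$ together with countably many auxiliary generators introduced for rigidification. Introduce a small number of ``structural'' tags encoding the vertex submodule $\sum_v R x_v$, the edge submodule $\sum_e R y_e$, and the incidence submodule $\sum_{e=\{v,w\}}R(y_e-x_v-x_w)$. Supplement these with countably many further tags involving the auxiliary generators, chosen so that any tag-preserving isomorphism $\Phi(G)\to\Phi(G')$ is forced to carry each $x_v$ to a unit multiple of some $x_{v'}$.

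The forward direction is straightforward: any graph isomorphism $G\cong G'$ extends $R$-linearly to an isomorphism $\Phi(G)\cong\Phi(G')$, and $\Phi$ is evidently Borel. The main difficulty lies in the reverse direction. A generic $R$-module isomorphism between free modules is controlled by $\mathrm{GL}$ over $R$ and may mix basis vectors through nontrivial $R$-linear combinations; even preservation of the incidence tag is not enough on its own to force such a map to come from a graph isomorphism, since one can often exhibit ``pseudo-automorphisms'' that respect only the three structural tags above but do not correspond to any permutation of $V$ (this is easy to see by hand for small graphs once $R$ has a few units available). The role of the additional rigidifying tags is precisely to rule out such pathological isomorphisms: once one has secured that $x_v\mapsto u_v x_{\sigma(v)}$ for some unit $u_v$ and some bijection $\sigma:V\to V'$, preservation of the incidence tag immediately translates into $\sigma$ being a graph isomorphism.

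The main obstacle is therefore this rigidification step. I expect it to be achievable uniformly for every countable ring $R$ because we only ever manipulate free $R$-modules and submodules defined by explicit $R$-linear combinations; no additional structure on $R$ beyond the existence of a free module of countably infinite rank is used. The resulting argument generalizes and cleanly packages the G\"obel--Shelah encoding that originally motivated the concept of a tagged module.
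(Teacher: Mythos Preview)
Your proposal correctly locates the crux of the argument---the rigidification step---but then leaves it as an unsubstantiated expectation (``I expect it to be achievable''). This is the entire substantive content of the theorem, and your sketch supplies no mechanism for achieving it.

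To see why ``adding further rigidifying tags'' is not routine, take $R$ a field. In the free module $\bigoplus_\omega R$, every nonzero vector is a unit multiple of a basis vector in \emph{some} basis, so no collection of named subspaces can single out the set $\{Rx_v : v\in V\}$ unless you name each $Rx_v$ individually---but then a tagged-module isomorphism must fix each tag, the structure becomes completely rigid, and $\Phi(G)\cong\Phi(G')$ forces $G=G'$ rather than $G\cong G'$. What is needed is a tagged module in which the family of ``vertex lines'' is invariant \emph{as a set} under the automorphism group (so that automorphisms may permute them), while simultaneously the automorphism group is rich enough that every permutation of this family actually lifts. These two requirements pull in opposite directions, and reconciling them is the real work; your outline does not address it.

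The paper's argument is organized quite differently and does not use free modules. It fixes a maximal left ideal $I$ and works with semisimple modules $V\cong\bigoplus R/I$ equipped with an $I$-basis $X$. The key device is to tag submodules so that $R*X=\{rx:r\notin I,\ x\in X\}$ coincides with $\Delta(V)\setminus\bigcup_n V_n$ (where $\Delta(V)=\{a:\mathrm{Ann}(a)=I\}$), which makes $R*X$ invariant without naming the individual lines. A Fraisse-type limit construction (carried out in the appendix) then produces such a tagged module $M$ together with an equivalence relation $E$ on $X$ for which every permutation of $X/E$ lifts to an automorphism of $M$. A doubling of $M$, with two further tags, yields an engine $N$ in which a graph on the $E$-classes can be encoded by a single additional submodule $U_G$; both directions of the reduction $G\mapsto(N,U_G)$ are then formal. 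None of this machinery is visible from, or replaced by, your outline.
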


One of the celebrated open questions from \cite{FS} was whether or not \mbox{TFAB}, the theory of torsion-free abelian groups, is Borel complete.  Modern progress on this question was made by Shelah and the second author in \cite{ShU}.  There, they prove that the class of tagged abelian groups (i.e., tagged $\Z$-modules in Definition~\ref{above}) is Borel equivalent to \mbox{TFAB}.  Thus, a positive answer to Friedman and Stanley's question follows directly from Theorem~\ref{taggedBC}.  In particular, given the results in \cite{ShU}, none of the Borel reductions given in Section~\ref{transfer} here are needed to prove that \mbox{TFAB} is Borel complete.
Alternatively, see Remark~\ref{integral} below.
Prior to this paper, Paolini and Shelah~\cite{PS}  announced a proof that 
\mbox{TFAB} is Borel complete, but their arguments are extremely intricate and earlier versions have had some  shortcomings.

Following this, in Section~\ref{transfer} we give several Borel reductions that are used to prove Borel completeness of $R$-modules when $R$ has some sort of `defect':
\begin{itemize}

\item (Theorem~\ref{A}) If a countable ring $R$ has a central, non-zero divisor, non-unit $r\in R$, then the theory of left $R$-modules is Borel complete.

\item (Theorem~\ref{B}) If a countable ring $R$ has central elements $x,y\in R$ with disjoint principal ideals $(x)\cap(y)=0$,
and if the two-sided ideal $I=\mbox{Ann}(x)+\mbox{Ann}(y)$ is proper, i.e., $1\not\in I$,
then the theory of left $R$-modules is Borel complete.
\item (Theorem~\ref{C}) If a countable, commutative ring $R$ has a descending chain $(I_n:n\in\omega)$ of annihilator ideals with $\bigcap_{n\in\omega} I_n=0$,
then the theory of $R$-modules is Borel complete.

\end{itemize}

\begin{Remark}  \label{integral} {\em As a special case of Theorem~\ref{A}, if $R$ is an integral domain that is not a field, then the theory of $R$-modules is Borel complete (take $r$ to be any nonzero nonunit). Moreover, if $R = \mathbb{Z}$ (so $R$-modules are the same as abelian groups) and if we take $r$ to be a prime, then the Borel reduction given by Theorem~\ref{A} actually only produces torsion-free groups, so we get a self-contained proof that \mbox{TFAB} is Borel complete.
}
\end{Remark}

Finally, in Section~\ref{TWO} we  use all of the above results to prove Theorem~\ref{big}, that if a countable, commutative ring $R$ is not an Artinian, principal ideal ring, then the
theory of $R$-modules is Borel complete.  



\medskip

We observe some interesting consequences of Theorem~\ref{A}:

\begin{Corollary} \label{R[x]} For any countable ring $R$, the theory of $R[x]$-modules is Borel complete.
\end{Corollary}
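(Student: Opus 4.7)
The plan is to invoke Theorem~\ref{A} on the countable ring $R[x]$, using the indeterminate $x$ itself as the distinguished element. Since $R[x]$ is countable whenever $R$ is, the verification of the corollary reduces to checking that $x \in R[x]$ satisfies the three hypotheses of Theorem~\ref{A}: that $x$ is central, that $x$ is a non-zero divisor, and that $x$ is not a unit.

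I would carry these three checks out in order. First, by the very construction of the polynomial ring $R[x]$, the indeterminate $x$ is defined to commute with every element of $R$ (and with itself), so $x$ lies in the center of $R[x]$ regardless of whether $R$ itself happens to be commutative. Second, left multiplication by $x$ shifts the coefficient sequence of any polynomial up by one index, so an equation $x \cdot p(x) = 0$ forces every coefficient of $p(x)$ to vanish; the symmetric computation handles multiplication on the right. Third, for any $q(x) \in R[x]$ the product $x \cdot q(x)$ has vanishing constant term, whereas $1 \in R[x]$ has constant term $1$, so $x$ cannot be a unit. (The degenerate case $R = 0$, for which Borel completeness would fail trivially since there is only one module, is implicitly excluded by the convention that rings have $1 \neq 0$.)

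Once these routine observations are in hand, Theorem~\ref{A} applied to the countable ring $R[x]$ with $r = x$ immediately yields Borel completeness for the theory of left $R[x]$-modules. I do not foresee any genuine obstacle; the content of the corollary is really just the observation that the hypothesis of Theorem~\ref{A} is automatically satisfied by $x \in R[x]$, uniformly in $R$.
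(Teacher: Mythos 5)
Your proposal is correct and matches the paper's proof exactly: both apply Theorem~\ref{A} to $R[x]$ with $r = x$, observing that $x$ is central, a non-zero divisor, and a non-unit. The paper leaves these three checks as immediate, while you spell them out, but the argument is the same.
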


\begin{proof}  The indeterminate $x\in R[x]$ is a central, non-zero divisor, non-unit, so we are done by Theorem~\ref{A}. \qed
\end{proof}

\begin{Definition}  {\em  For $R$ any countable ring, we consider the class of all {\em left $R$-endomorphism structures} $(V,T)$, i.e. the class of left $R$-modules $V$ equipped with an endomorphism $T: V \to V$.
}
\end{Definition}

\begin{Corollary} \label{endo}  For any countable ring $R$, the theory of left $R$-endomorphism structures is Borel complete.
\end{Corollary}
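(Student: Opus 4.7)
The plan is to Borel-reduce $R[x]$-modules to $R$-endomorphism structures and then invoke Corollary~\ref{R[x]}. The underlying observation is the standard identification: specifying a left $R[x]$-module structure on an abelian group $V$ is the same as specifying a left $R$-module structure on $V$ together with an $R$-linear endomorphism $T: V\to V$, where $T$ corresponds to multiplication by the indeterminate $x$. Given $(V,T)$, one recovers the $R[x]$-action via $\bigl(\sum a_i x^i\bigr)\cdot v = \sum a_i T^i(v)$, and conversely from an $R[x]$-module $M$ one sets $T(v) = x\cdot v$.

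First I would set up the Borel reduction $f: \Mod(R[x]\text{-mod})\to \Mod(R\text{-endo})$ explicitly. Fix universe $\omega$ on both sides. Given an $R[x]$-module $M$ with universe $\omega$, define $f(M)$ to be the reduct of $M$ to the language of $R$-modules, together with the unary function symbol $T$ interpreted by $T(v):= x\cdot v$. Each atomic fact about $f(M)$ is a fact about $M$, so $f$ is Borel (indeed continuous). Clearly if $M\cong_{R[x]} N$ via $\pi$, then $\pi$ intertwines the $R$-action and $T$, so $f(M)\cong f(N)$; conversely, any isomorphism $\pi:(V_1,T_1)\to (V_2,T_2)$ of $R$-endomorphism structures automatically respects all polynomial actions and hence is an $R[x]$-isomorphism of the associated modules. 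Thus $f$ is a Borel reduction.

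Composing with the Borel reduction witnessing Corollary~\ref{R[x]} yields that the theory of left $R$-endomorphism structures is Borel complete, as required. There is essentially no obstacle here beyond bookkeeping: the only thing to verify is that the map $M\mapsto (M,x\cdot -)$ is Borel and that the isomorphism notions on the two sides coincide, both of which are immediate from the definitions.

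\begin{proof}
By Corollary~\ref{R[x]}, the theory of $R[x]$-modules is Borel complete, so it suffices to exhibit a Borel reduction from $R[x]$-modules to $R$-endomorphism structures. Given an $R[x]$-module $M$ (with universe $\omega$), let $f(M) = (V, T)$ where $V$ is the $R$-module reduct of $M$ and $T(v) := x\cdot v$. The map $f$ is continuous, since each atomic relation of $f(M)$ is given by an atomic relation of $M$. If $\pi: M\to N$ is an $R[x]$-isomorphism, then it commutes with multiplication by $x$, hence is an isomorphism $f(M)\to f(N)$. Conversely, any isomorphism $\pi:(V_1,T_1)\to (V_2,T_2)$ of $R$-endomorphism structures satisfies $\pi(T_1^i(v)) = T_2^i(\pi(v))$ for all $i$, and is $R$-linear, so it respects the action of every polynomial $\sum a_ix^i$; thus $\pi$ is an isomorphism of the corresponding $R[x]$-modules. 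Therefore $f$ is a Borel reduction, and Borel completeness transfers. \qed
\end{proof}
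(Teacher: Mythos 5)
Your proof is correct and matches the paper's argument exactly: both reduce from $R[x]$-modules (Borel complete by Corollary~\ref{R[x]}) to $R$-endomorphism structures by passing to the $R$-reduct and interpreting $T$ as multiplication by $x$. You spell out the routine verification (using centrality of $x$ so that $T$ is $R$-linear, and the equivalence of isomorphism on the two sides) that the paper compresses into ``Easily, this works.''
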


\begin{proof}  Fix any countable ring $R$.  By Corollary~\ref{R[x]}, it is enough to Borel reduce the class of countable left $R[x]$-modules to the class of left $R$-endomorphism structures.
Send the left $R[x]$-module $V$ to the pair $(V', T)$, where $V'$ is the reduct of $V$ to an $R$-module, and $T(a) := xa$. Easily, this works.  \qed
\end{proof}

From this we obtain the following sharpening of Theorem~\ref{taggedBC}:

\begin{Corollary}  For any countable ring $R$, the theory of left $R$-modules with 4 distinguished submodules is Borel complete.
\end{Corollary}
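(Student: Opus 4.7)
The plan is to apply Corollary~\ref{endo}: it suffices to Borel reduce the class of countable left $R$-endomorphism structures $(V,T)$ to the class of countable left $R$-modules equipped with four distinguished submodules. I would send $(V,T)$ to the $R$-module $W := V \oplus V$ equipped with the four submodules
\begin{align*}
U_1 := V \oplus 0, \qquad U_2 := 0 \oplus V, \qquad U_3 := \{(v,v) : v \in V\}, \qquad U_4 := \{(v,T(v)) : v \in V\}.
\end{align*}
After fixing once and for all a Borel bijection between $\omega$ and $\omega \sqcup \omega$, this assignment is manifestly Borel, so the entire content is to verify that it faithfully tracks isomorphism.

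The forward direction is immediate: any isomorphism $\phi : (V,T) \to (V',T')$ induces $\phi \oplus \phi : W \to W'$, which carries each $U_i$ to $U'_i$.

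For the reverse direction, suppose $\psi : (W, U_1, U_2, U_3, U_4) \to (W', U'_1, U'_2, U'_3, U'_4)$ is an isomorphism of tagged $R$-modules. Since $W = U_1 \oplus U_2$ is an internal direct sum decomposition, and likewise for $W'$, and $\psi$ respects the labels on the first two submodules, $\psi$ splits as $\psi_1 \oplus \psi_2$ where each $\psi_i$ restricts to an isomorphism $U_i \to U'_i$. Identifying $U_i \cong V$ and $U'_i \cong V'$ through the obvious projections, one obtains two maps $\phi_1, \phi_2 : V \to V'$. The condition $\psi(U_3) = U'_3$ forces $\phi_1(v) = \phi_2(v)$ for all $v$, so there is a single map $\phi := \phi_1 = \phi_2 : V \to V'$. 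Finally, $\psi(U_4) = U'_4$ translates into $\phi \circ T = T' \circ \phi$, so $\phi$ is an isomorphism of $R$-endomorphism structures. I do not anticipate any real obstacle here: the argument is a classical ``four-subspace encoding'' of a single endomorphism, and the only care required is the routine Borel bookkeeping in presenting the map on the relevant Polish spaces of structures with universe $\omega$.
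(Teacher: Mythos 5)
Your proposal is correct and is exactly the construction the paper uses (which the paper simply labels ``easy'' without verification): reduce from left $R$-endomorphism structures via Corollary~\ref{endo} and send $(V,T)$ to $V\oplus V$ tagged with the two factors, the diagonal, and the graph of $T$. The verification you supply for the reverse direction is the intended one and fills in the detail the paper omits.
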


\begin{proof}  Fix any countable ring $R$.  In light of Corollary~\ref{endo} it suffices to define a Borel reduction from left $R$-endomorphism structures $(V,T)$ to left $R$-modules $(W,U_0,U_1,U_2,U_3)$
	with four distinguished submodules.  But this is easy.  Given $(V,T)$, put $W=V\times V$ as a left $R$-module and interpret the $U_n$'s as:
	$U_0:=V \times 0$; $U_1=0 \times V$; $U_2:=\{(a,a):a\in V\}$ (the diagonal); and $U_3:=\{(a,T(a)):a\in V\}$ (the graph of $T$). \qed
\end{proof}

It is not hard to show that if $R$ is a field, then $R$ adjoined by three distinguished submodules is not Borel complete, in fact it has only countably many countable models. This  follows from the representation theory of quivers, in particular the subspace problem;  see e.g., Theorem 13.A and the preceding discussion in \cite{Prest}.

We do not know whether a dichotomy similar to Theorem~\ref{big} holds for countable, non-commutative rings $R$.  Other than the cases covered by Theorems~\ref{A} and \ref{B}, we do not have tools for determining Borel completeness.  In particular, we do not know of a similar dichotomy for quasi-simple rings (i.e., rings with no proper 2-sided ideals).

 \section{Tagged left $R$-modules are Borel complete}  \label{taggedsection}
 
 
 The main tool used in the proof of Theorem~\ref{taggedBC} is building structures with highly controlled automorphism groups.
 We recall the following terminology.
 
 \begin{Definition}  {\em  Given a structure $M$ and a subset $X\subseteq M^k$, we call $X$ {\em invariant} if it is preserved setwise under all automorphisms $\sigma\in Aut(M)$.
 }
 \end{Definition}
 
 It follows as an easy consequence of Scott's Isomorphism Theorem that if $M$ is any countable structure in a countable language $\L$ and if $X\subseteq M^k$ is any subset, then $X$ is invariant if and only
 if $X$ is definable by an infinitary  formula of $\L_{\omega_1,\omega}$.  In what follows, we will use this fact without comment.  In many cases, we will prove that some set is infinitarily definable and
 conclude that it is invariant; this is immediate, and does not use countability of $M$.

%
%

Fix any countable ring $R$, not necessarily commutative.  Throughout this section, we only consider left $R$-modules. 
Let $\L_R:=\{+,0,\cdot_r\}_{r\in R}$ be the usual language for (left) $R$-modules. 

\begin{Definition}  {\em  
A {\em tagged (left) $R$-module} is an expansion  $M=(V,+,0,\cdot_r,U_i)_{r\in R,i\in I}$ of a left $R$-module $(V,+,0 \cdot_r)_{r\in R}$
by a countably infinite set of unary predicates $U_i$, where the interpretation $U_i^M$ of each $U_i$ is a left $R$-submodule of $V$.
}
\end{Definition}

This notion is not new; for instance in  \cite{GS},  G\"obel and Shelah denote the class of countable tagged (left) $R$-modules as $R_\omega\mbox{-mod}$.


\begin{Remark}  \label{shift}
	{\em  Note that in the definition of tagged $R$-modules, we did not specify in advance what symbols to use as the new unary predicates. This has the following notational advantage: if $M$ is a tagged $R$-module,
		then so is any expansion $M^*=(M,W_i)_{i \in I}$ of $M$ by any countable sequence of submodules. This convention is inessential; we could alternatively just shuffle the named submodules of $M$ to make room.
	}
\end{Remark}

The goal of this section is to prove Theorem~\ref{taggedBC}, that the class of tagged $R$-modules is Borel complete for every countable ring $R$.
We accomplish this goal in the following steps. First, we isolate a suitable family $\mathbf{K}$ of tagged $R$-modules which satisfies disjoint amalgamation. Using a general theorem from the appendix, we construct a Fraisse-like limit $M$ of $\mathbf{K}$ whose automorphism group is very rich.
Next, we use this $M$ to construct a tagged $R$-module $N$ which serves as an engine for coding. Specifically, we will obtain a Borel operation assigning, to each graph $G$ on $\omega$, an $R$-submodule $U_G$ of $N$. This is arranged so that $(\omega, G) \mapsto (N, U_G)$ is the desired reduction.


\begin{quotation}
\noindent {\bf 
Throughout the whole of this section, fix a countable ring $R$ (possibly non-commutative) and also fix a maximal left ideal $I\subseteq R$.}
\end{quotation}

We will work with structures in three languages.


\begin{itemize}  \item  $\L=\{+,0\}\cup\{\cdot_r:r\in R\}$ is the language of left $R$-modules;
\item  $\L^+$ is the language of tagged left $R$-modules, i.e., $L$ adjoined with countably many unary predicate symbols $\{V_n:n\in\omega\}$; and
\item  $\L^+_X$ is $\L^+$ with a unary predicate symbol $X$ adjoined.
\end{itemize}


\subsection{$I$-Bases of $R$-modules}


In this first subsection, we deal only with (left) $R$-modules. Recall that if $V$ is an $R$-module and $X \subseteq V$ then $RX$ is the $R$-submodule of $V$ generated by $X$.

\begin{Definition}
{\em Suppose $V$ is an $R$-module. Then say that $X \subseteq V$ is {\em $I$-independent} if for all distinct $\{x_i: i < n\}\subseteq X$ and for all $\{r_i: i < n\}\subseteq R$, $\sum_{i < n} r_i x_i = 0$ if and only if each $r_i \in I$.   [Note the `if and only if'.  Being $I$-independent also conveys positive information.]
Say that $X$ is an {\em $I$-basis for $V$} if it is $I$-independent, and also $RX = V$.

Let $\Delta(V)$ denote the set of all $a \in V$ with $\mbox{Ann}(a) = I$.  As a set $\Delta(V)$ does not have pleasant closure properties, e.g., $0\not\in \Delta(V)$.
}
\end{Definition}
The following facts are easily established.

\begin{Fact}  \label{ringfacts}
Suppose $V$ is an $R$-module. Then:
\begin{enumerate}
\item  For every $a\in \Delta(V)$,  $Ra$ is a simple $R$-module.
\item   For any $a,b\in \Delta(V)$, either $Ra=Rb$ or $Ra\cap Rb=0$.
\item If $X$ is $I$-independent then $X\subseteq \Delta(V)$ and  $Rx\cap Ry=0$ for distinct $x,y\in X$.
\item  If $X$ is an $I$-basis then $V$ is a semisimple (left) $R$-module.  In fact, $V\cong\bigoplus_X R/I$ via the map $f(\sum r_ix_i)(x_i)=(r_i+I)$.
\item  If $X$ is an $I$-basis of $V$ and $X'$ is an $I$-basis of $V'$ and $V \cap V' = 0$, then $X \cup X'$ is an $I$-basis of $V \oplus V'$.
\end{enumerate}
\end{Fact}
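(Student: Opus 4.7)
My plan is to derive each of the five items directly from the maximality of the left ideal $I$ together with the defining ``if and only if'' in $I$-independence, taking them in the order stated since each uses the previous.

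For (1), the assignment $r \mapsto ra$ gives an $R$-module isomorphism $R/\mbox{Ann}(a) \cong Ra$; since $a \in \Delta(V)$ means $\mbox{Ann}(a) = I$ and $I$ is a maximal left ideal, $R/I$ is simple, hence so is $Ra$. For (2), both $Ra$ and $Rb$ are simple by (1), so $Ra \cap Rb$ is a submodule of $Ra$ that is either $0$ or all of $Ra$, and in the latter case simplicity of $Rb$ forces $Ra = Rb$. For (3), applying $I$-independence to the singleton $\{x\}$ gives $\mbox{Ann}(x) = I$ (both directions of the biconditional are needed, since we must know $I \subseteq \mbox{Ann}(x)$ as well as $\mbox{Ann}(x) \subseteq I$), so $x \in \Delta(V)$. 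If $x \neq y$ lie in $X$ and $v = r_1 x = r_2 y \in Rx \cap Ry$, I would rewrite this as $r_1 x + (-r_2)y = 0$; $I$-independence then forces $r_2 \in I = \mbox{Ann}(y)$, so $v = r_2 y = 0$.

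For (4), I would verify that the map $f : V \to \bigoplus_X R/I$ sending $\sum r_i x_i$ to $(r_i + I)_i$ is well-defined and injective (both reducing to the ``only if'' direction of $I$-independence, applied to the difference of two representations) and is surjective because $RX = V$; since each $R/I$ is simple, $V$ is a direct sum of simples and hence semisimple. For (5), any finite sum along $X \cup X'$ splits uniquely as a sum $w$ in $V$ plus a sum $w'$ in $V'$, and if the total is $0$ then $w = -w' \in V \cap V' = 0$, forcing each piece to be $0$; the $I$-bases of $V$ and of $V'$ then supply the required coefficient condition for $X \cup X'$, in both directions.

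I do not expect any genuine obstacle here --- the definition of $\Delta(V)$ and the phrasing of $I$-independence are engineered precisely so that each item is essentially immediate. The only step requiring any care is (3), where one must apply the biconditional of $I$-independence to the two-term combination $r_1 x + (-r_2)y$ rather than trying to reason inside $Rx$ alone.
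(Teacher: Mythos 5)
The paper offers no proof of this Fact, dismissing it with ``easily established,'' so there is nothing to compare against; but your argument is correct and is the natural one. Items (1), (2), (3), and (5) are handled exactly as one would expect. The only slip is in the parenthetical for (4): well-definedness of $f$ does reduce to the ``only if'' direction of $I$-independence (a vanishing combination forces all coefficients into $I$), but injectivity reduces to the \emph{``if''} direction --- from $f(v) = f(w)$ one gets $r_i - s_i \in I$ for each $i$, and one then needs ``all coefficients in $I$ implies the combination vanishes'' to conclude $v = w$. This is a trivial misattribution rather than a gap; the surrounding reasoning makes clear you understand both directions are in play, and the proof is sound.
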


\begin{Definition}  {\em  Suppose $V$ is an $R$-module.  For any $Y\subseteq V$, let 
$$R*Y:=\{ry:r\in R, y\in Y, \hbox{and $ry\neq 0\}$}.$$ 
To contrast, note that for any $Y \subseteq V$, $RY$ is by definition the $R$-submodule of $V$ generated by $Y$, i.e. all finite sums from $R * Y$.

For $u,v\in V$, put $u{\sim}v$ if and only if $Ru=Rv$.
}
\end{Definition}


Clearly, $\sim$ is an invariant equivalence relation on $V$, and if $X$ is an $I$-basis then the elements of $X$ are pairwise $\sim$-inequivalent. Moreover, for any $u,v\in R*X$,
$u{\sim} v$ iff $(\exists r\in R) ru=v$ iff $(\exists s\in R) u=sv$ (since $Ru$ and $Rv$ are simple); and $R*X$ is the $\sim$-saturation of $X$.


\begin{Lemma}  \label{algebra}  Suppose $V$ is an $R$-module, $X$ is an $I$-independent subset of $V$ and $Y \subseteq X$. Then $(R* X) \cap RY = R*Y$.
\end{Lemma}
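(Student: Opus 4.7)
The plan is to dispose of the inclusion $R*Y \subseteq (R*X) \cap RY$ immediately from $Y \subseteq X$ (any $ry \in R*Y$ is nonzero of the form ``ring element times $X$-element'' and visibly lies in $RY$), and then attack the reverse inclusion by exploiting the biconditional built into the definition of $I$-independence. Concretely, I would take $v \in (R*X) \cap RY$, so that $v = rx$ for some $x \in X$ and $r \in R$ with $rx \neq 0$, while also $v = \sum_{i=1}^n s_i y_i$ for some distinct $y_1,\dots,y_n \in Y$ and $s_i \in R$ (combining repeated terms first if necessary). Rearranging yields the single linear relation $rx + \sum_i (-s_i) y_i = 0$ among elements of $X$.

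Next I would split on whether $x$ coincides with some $y_j$. If $x \notin \{y_1,\dots,y_n\}$, then $x, y_1,\dots,y_n$ are distinct elements of $X$, so $I$-independence forces every coefficient---in particular $r$---to lie in $I$. But by Fact~\ref{ringfacts}(3), $x \in \Delta(V)$ and hence $\mbox{Ann}(x) = I$, giving $rx = 0$ and contradicting $v \neq 0$; so this case cannot occur. If instead $x = y_j$ for some $j$, I would collect the $x$-terms to rewrite the relation as $(r - s_j) x + \sum_{i \neq j}(-s_i) y_i = 0$ over the (still-distinct) $X$-elements $x, y_i$. $I$-independence then delivers $r - s_j \in I$ and $s_i \in I$ for each $i \neq j$; since $\mbox{Ann}(y_i) = I$ as well, the terms $s_i y_i$ with $i \neq j$ all vanish, leaving $v = s_j y_j$ with $y_j = x \in Y$ and $s_j y_j \neq 0$, which is precisely the form required for $v \in R*Y$.

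The main (and mild) obstacle is simply to register that $I$-independence is strictly stronger than ordinary linear independence: its defining biconditional has an ``only if'' direction which is exactly what forces $r \in I$ in the first case and short-circuits the bad configuration, and which also lets us discard the unwanted $y_i$ terms in the second case without having to argue anything further. Beyond that, the proof is bookkeeping together with a single invocation of Fact~\ref{ringfacts}(3).
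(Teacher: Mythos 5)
Your proof is correct and takes essentially the same route as the paper: both use the ``only if'' direction of $I$-independence to force the coefficient of $x$ into $I$ when $x$ is absent from the $Y$-expansion, hence $rx=0$. The paper is slightly more economical in your Case B --- once $x=y_j\in Y$, the conclusion $rx\in R*Y$ is immediate from $rx\neq 0$, so there is no need to argue that the other terms $s_iy_i$ vanish --- but that extra bookkeeping is harmless.
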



\begin{proof} Clearly $R* Y \subseteq (R* X) \cap RY$. Conversely, suppose $rx \in (R*X) \cap RY$, where $x \in X$. It suffices to show $x \in Y$. Suppose it weren't. Since $rx \in RY$ we can write $rx = \sum_{i < n} r_i y_i$ where $(y_i:i < n)$ are distinct elements of $Y$. By $I$-independence, we have that $r \in I$, but this implies $rx = 0$, contradicting $rx \in R*X$. \qed
\end{proof}

\subsection{Tagged left $R$-modules with nearly invariant bases}


We now discuss tagged left $R$-modules, i.e., $\L^+$-structures $(V,V_n)_{n\in\omega}$ and their expansions by a unary predicate $X$.

\begin{Definition}  {\em  Let $\Phi_0$ be the $(\L^+_X)_{\omega_1,\omega}$-sentence asserting of its model $(V, X, V_n)_{n \in \omega}$ that $V$ is a left $R$-module, each $V_n$ is an $R$-submodule, and that $X$ is an $I$-basis of $V$.}
\end{Definition}

Thus, models of $\Phi_0$ are tagged left $R$-modules with named $I$-basis. Now our goal is to produce complicated tagged $R$-modules, and to do so we will eventually have to forget $X$, since $X$ is not a submodule. Ideally we would want to be able to recover $X$ from the rest of the structure, but this turns out to be too much to ask. So instead we just try to make $R*X$ recoverable from the rest of the structure; this is good enough in practice, because $R*X/\sim$ is in bijection with $X$.

The key trick is as follows. Let $\Phi_1$ be the $(\L_X^+)_{\omega_1,\omega}$-sentence asserting
$$\Phi_1:= \Phi_0\wedge   \hbox{``For all $a\in \Delta(V)$, $a\in R*X$ if and only if $a\not\in \bigcup\{V_n:n\in\omega\}$.''}$$


\begin{Lemma}  \label{RXinvariant}  Suppose that $(V,X, V_n)_{n\in\omega}\models \Phi_0$.
Then $(V,X,V_n)_{n\in\omega}\models \Phi_1$ if and only if $R*X=R*(\Delta(V)\setminus \bigcup\{V_n:n\in\omega\})$. In particular, this implies $R*X$ is invariant in $(V, V_n)_{n \in \omega}$.
\end{Lemma}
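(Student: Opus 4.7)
The plan is to verify the stated equivalence by unpacking $\Phi_1$ and showing that $R*X$ coincides with the set $\Delta(V)\setminus\bigcup_n V_n$, which is visibly invariant in the reduct.

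First I would note that, under $\Phi_0$, we always have $R*X\subseteq\Delta(V)$. Indeed, each $x\in X$ lies in $\Delta(V)$ by $I$-independence (so $\mbox{Ann}(x)=I$), and for any nonzero $rx\in R*\{x\}$, Fact~\ref{ringfacts}(1) gives that $Rx$ is simple, hence $R(rx)=Rx$ and $\mbox{Ann}(rx)=\mbox{Ann}(x)=I$. Second, I would check the helpful identity
\[
R*(\Delta(V)\setminus \bigcup_n V_n)=\Delta(V)\setminus\bigcup_n V_n.
\]
The inclusion $\supseteq$ is immediate since each $a\in\Delta(V)$ is nonzero (as $I\ne R$), so $a=1\cdot a$. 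For $\subseteq$, fix $y\in\Delta(V)\setminus\bigcup_n V_n$ and $r\in R$ with $ry\ne 0$. As in the first step, $R(ry)=Ry$, so $ry\in\Delta(V)$; and if $ry\in V_n$ for some $n$, then since $V_n$ is a submodule, $Ry=R(ry)\subseteq V_n$, forcing $y\in V_n$, contradiction.

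With these in hand, the equivalence is essentially a restatement. Assume $\Phi_1$. If $a\in R*X$ then $a\in\Delta(V)$ by the first step, and $a\notin\bigcup_n V_n$ by the forward direction of $\Phi_1$, so $a\in\Delta(V)\setminus\bigcup_n V_n=R*(\Delta(V)\setminus\bigcup_n V_n)$. Conversely, if $a\in\Delta(V)\setminus\bigcup_n V_n$, then $a\in R*X$ by the backward direction of $\Phi_1$; combined with the first step, this yields the other containment. For the converse direction, if $R*X=R*(\Delta(V)\setminus\bigcup_n V_n)$, then by the second step this common set equals $\Delta(V)\setminus\bigcup_n V_n$, and one reads off directly that for $a\in\Delta(V)$, $a\in R*X$ iff $a\notin\bigcup_n V_n$, which is $\Phi_1$.

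The ``in particular'' clause is then immediate: the set $\Delta(V)\setminus\bigcup_n V_n$ is defined purely in terms of the $R$-module structure of $V$ and the predicates $V_n$, so it is invariant under $\mbox{Aut}(V,V_n)_{n\in\omega}$, and hence so is $R*X$. The main (and really only) obstacle is the identity $R*(\Delta(V)\setminus\bigcup_n V_n)=\Delta(V)\setminus\bigcup_n V_n$; once one sees that simplicity of $Ry$ together with the submodule property of each $V_n$ prevents scalar multiplication from escaping the complement of $\bigcup_n V_n$, the rest is bookkeeping.
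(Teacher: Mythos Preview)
Your proof is correct and follows essentially the same approach as the paper: both hinge on the simplicity of $Ry$ for $y\in\Delta(V)$ together with the fact that each $V_n$ is a submodule. Your organization is slightly different---you isolate the identity $R*(\Delta(V)\setminus\bigcup_n V_n)=\Delta(V)\setminus\bigcup_n V_n$ as a preliminary step and then read off $\Phi_1$ as literally asserting $R*X=\Delta(V)\setminus\bigcup_n V_n$---whereas the paper verifies the two inclusions of $R*X=R*(\Delta(V)\setminus\bigcup_n V_n)$ directly and only invokes simplicity in the converse direction; but the content is the same.
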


\begin{proof}  First, assume $(V,X,V_n)_{n\in\omega}\models \Phi_1$.  To see `$\subseteq$', choose any $x\in X$.  Then $x\in \Delta(V)$ and also $x \in R*x$.
Since $(V,X,V_n)_{n\in\omega}\models\Phi_1$, we get $x\not\in \bigcup\{V_n:n\in\omega\}$, so $x\in \Delta(V)\setminus\bigcup\{V_n:n\in\omega\}$.
Thus, $R*x\subseteq R*(\Delta(V)\setminus \bigcup\{V_n:n\in\omega\})$.  As this holds for all $x\in X$, 
$R*X\subseteq R*(\Delta(V)\setminus \bigcup\{V_n:n\in\omega\})$.
To see `$\supseteq$', choose any $a\in (\Delta(V)\setminus \bigcup\{V_n:n\in\omega\})$.
Since $(V,X,V_n)_{n\in\omega}\models\Phi_1$, $a\in R*X$.  Thus, $Ra\subseteq R*X$.  So  `$\supseteq$' follows.

Conversely, assume that $R*X=R*(\Delta(V)\setminus \bigcup\{V_n:n\in\omega\})$.
To see that $(V,X,V_n)_{n\in\omega}\models \Phi_1$, choose any $a\in \Delta(V)$.  First, assume $a\in R*X$.  By our assumption, there is $e\in (\Delta(V)\setminus \bigcup\{V_n:n\in\omega\})$ such that
$a\in R*e$.  As both $a,e\in \Delta(V)$, simplicity implies that $Ra=Re$.  Thus, if $a\in V_n$ for some $n$, then $e\in Ra\subseteq V_n$, which is a contradiction.  Thus,
$a\not\in\bigcup\{V_n:n\in\omega\}$.  Finally, suppose $a\not\in\bigcup\{V_n:n\in\omega\}$.  As we chose $a\in \Delta(V)$, our assumption implies $a\in R*X$. \qed
\end{proof}

\medskip


As an indexing device, fix a partition  $\omega=\bigsqcup\{J_k\in\omega\}$ with each $J_k$ infinite.  
Let $\K$ be the class of all $\L_X^+$-structures $(V, X, V_n)_{n \in \omega}$ such that:
\begin{itemize}
	\item $(V, X, V_n)_{n \in \omega} \models \Phi_1$;
\item There is some $k_*$ such that $V_n=0$ for every $n\in \bigcup_{k \geq k_*} J_k$;
\item $X^K$ is finite.
\end{itemize}

\begin{Lemma}  \label{modsuit} The class $\K$ is suitable (see Definition~\ref{GrowingDef}).  In fact, $\K$ is strongly suitable.
\end{Lemma}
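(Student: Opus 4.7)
The plan is to verify, one at a time, the conditions constituting (strong) suitability in Definition~\ref{GrowingDef}: closure of $\K$ under isomorphism, a hereditary property (closure under the natural substructures), joint embedding, and (disjoint) amalgamation, plus whatever canonicity or bookkeeping conditions the ``strong'' variant demands. The straightforward conditions are immediate: $\Phi_1$, finiteness of $X$, and the eventual-vanishing condition on the $V_n$'s are all isomorphism-invariant, and the natural substructures of $(V,X,V_n)\in \K$ obtained by picking a finite $X_0\subseteq X$ and restricting to $(RX_0, X_0, V_n\cap RX_0)$ lie in $\K$; verifying $\Phi_1$ for such a restriction uses Lemma~\ref{algebra} to identify $R*X_0$ inside $R*X$.

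The heart of the argument is disjoint amalgamation. Given $M_0\hookrightarrow M_1,M_2$ in $\K$, with modules $V_0\subseteq V_i$ and bases $X_0\subseteq X_i$ (arranged so that $X_1\cap X_2=X_0$), I would form the $R$-module pushout $V:=V_1\oplus_{V_0}V_2$ and set $X:=X_1\cup X_2$; iterated application of Fact~\ref{ringfacts}(5) yields that $X$ is an $I$-basis of $V$. For the submodule predicates, I would first set $V_n:=V_n^{M_1}+V_n^{M_2}$; then $V_n\cap V_i=V_n^{M_i}$ (using $V_1\cap V_2=V_0$ together with the hereditary property), so the inclusions $M_i\hookrightarrow (V,V_n)$ are $\L^+_X$-embeddings, and $V_n\cap R*X=0$ (intersect into each $V_i$ separately, then invoke $\Phi_1$ for $M_i$).

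These $V_n$'s will not in general satisfy $\Phi_1$: the ``mixed sums'' $v=v_1+v_2$ with $v_i\in V_i\setminus V_0$ lie in $\Delta(V)\setminus R*X$ but need not belong to any $V_n$. To repair this, I would enumerate the countably many $\sim$-classes of such uncovered elements and, using the freedom of Remark~\ref{shift} and the infinite reservoir of unused indices in $J_{k_*}$ where $k_*:=\max(k_*^{M_1},k_*^{M_2})$, tag each simple submodule $Rv$ via a fresh previously-unused predicate $V_n$. The verification of $\Phi_1$ in the amalgam then reduces to two checks: every new tag $Rv$ is disjoint from $R*X$ (by Fact~\ref{ringfacts}(2), since $v\notin R*X$ forces $Rv\neq Rx$ for each $x\in X$); and every nonzero $w\in\Delta(V)\setminus R*X$ either lies in $V_1\cup V_2$ (handled by old tags via $\Phi_1$ for each $M_i$) or is a mixed sum (handled by a new tag). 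Joint embedding is the special case where $M_0$ is trivial.

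The main obstacle I anticipate is precisely this bookkeeping for mixed sums --- ensuring that the enumeration covers every uncovered $\sim$-class without disturbing $R*X$, and organizing the $J_k$-indexing so that the eventual-vanishing condition persists in the amalgam (one takes $k_{\mathrm{amalg}}:=k_*+1$, since all new tags can be placed into $J_{k_*}$). Beyond this, \emph{strong} suitability (presumably demanding a canonical form of the amalgam or a uniform enumeration of the presentations) should follow from the same construction by fixing at the outset a canonical enumeration of the countably many simple submodules that can arise in any amalgam of elements of $\K$, so that the choice of fresh tags becomes functorial in the input diagram.
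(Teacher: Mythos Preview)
Your disjoint amalgamation argument is essentially the paper's: form the free amalgam on $X_1\cup X_2$, set $V_n'=V_n^{M_1}+V_n^{M_2}$, observe this may fail $\Phi_1$ because of ``mixed'' elements of $\Delta(V')\setminus R*X'$ not covered by any $V_n'$, and repair by tagging each such simple submodule with a fresh index from $J_{k_*}$. The organization differs only cosmetically---the paper defines the bad set $E$ directly as $\Delta(V')\setminus\bigl(\bigcup_n V'_n\cup R*X'\bigr)$ rather than via your mixed-sum description, but these coincide.

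Two points deserve correction. First, you are checking a condition that is not there: neither Definition~\ref{GrowingDef} (suitable) nor Definition~\ref{suitabledef} (strongly suitable) requires any hereditary/substructure closure, so that paragraph is unnecessary. Second, and more importantly, your last paragraph misreads what \emph{strongly suitable} means. It does not ask for a canonical or functorial amalgam; the extra requirements beyond ``suitable'' are just that $X^A$ is finite and generates $A$, and that $\K$ is unbounded. All of these are immediate from the definition of $\K$ (each $A\in\K$ has finite $X^A$ by fiat, $X^A$ is an $I$-basis so $RX^A=A$, and unboundedness comes from adding one new basis element). Your proposed ``canonical enumeration'' is irrelevant to the lemma.

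One small omission in your amalgamation check: to see that $M_i$ remains a substructure of the repaired amalgam $N$, you must verify $Re\cap M_i=0$ for each newly tagged $e$, not merely $Re\cap R*X=\emptyset$. This is easy---if $0\neq re\in M_i$ then simplicity gives $e\in M_i$, whence $\Phi_1$ for $M_i$ forces $e$ into some old $V_m^{M_i}\subseteq V'_m$, contradicting the choice of $e$---but it is the substance of the paper's final paragraph and should be stated.
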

\begin{proof}  The only interesting part is verifying disjoint amalgamation.
Choose $M,M_0,M_1\in \K$ with $M$ an $\L_X^+$-substructure of both $M_0$ and $M_1$ and $M_0\cap M_1=M$.
 Write $X_i = (X)^{M_i}$. Let $V'$ be the $R$-module generated  by $X':=X_0 \cup X_1$, subject to the relations $\sum r_i x_i =0 $ precisely when each $r_i \in I$.  Thus, each $M_i$ is a submodule of $V'$. 
Let  $N' = (V',X', V'_n)_{n\in\omega}$, where $V'_n = V_n^{M_0} + V_n^{M_1}$. It is readily checked that $N'\models \Phi_0$, $X'$ is finite, and there is some least $k^*$ so that $V'_n=0$  for all
$n\in \bigcup_{k \geq k_*} J_k$.


\medskip
\noindent{\bf Claim.}  For $i=0,1$, if $a\in M_i\cap V_n'$ for some $n$, then $a\in V^{M_i}_n$.


\begin{proof}  By symmetry, take $i=0$.  Say $a\in M_0$ and $a=b_0+b_1$ with $b_0\in V_n^{M_0}$ and $b_1\in V_n^{M_1}$.  
Since $b_1=a-b_0$, $b_1\in M_0$. Since the structures $M_0$ and $M_1$ agree on their intersection, this implies $b_1 \in V_n^{M_0}$. Thus $a =b_0+b_1\in V_n^{M_0}$. \qed
\end{proof}

It follows immediately from the Claim that both $M_0$ and $M_1$ are $\L_X^+$-substructures of $N'$.  Unfortunately, 
 $N'$ need not be a model of $\Phi_1$ (nor $\Phi_2$) but a slight alteration of $N'$ will be.
 The problem is that there may be `extra' elements $e\in \Delta(V')\setminus\bigcup\{V_n':n\in\omega\})$ that are not in $R*X'$.
 Let $E$ be the set of these problematic elements.  As $E$ is countable and $J_{k^*}$ is countably infinite, 
associate  distinct $n(e)\in J_{k^*}$ to every $e\in E$.  


Let $N$ be the $\L_X^+$-structure with the same $(V',X')$ as $N'$, where, for each $e\in E$, $V_{n(e)}^N$ is interpreted as the (simple) submodule $Re$,
and
$V_n^N$ interpreted as $V_n'$ when $n\neq n(e)$ for any $e\in E$.

As $N$ and $N'$ have the same underlying $R$-module and same interpretation of $X$, $N\models\Phi_0$.  To see that $N\models \Phi_1$, choose any $a\in \Delta(V')$.
First, if $a\in (V'\setminus\bigcup\{V_n^N:n\in\omega\})$, then $a\not\in E$, lest $a\in V_{n(a)}^N$.   So, by definition of $E$, $a\in R*X'$.
Conversely, assume $a\in R*X'$.  Then, for any $e\in E$ it follows from simplicity that $Ra\cap Re=\emptyset$, hence $a\not\in V_{n(e)}^N$ for any $e\in E$.
Thus, it suffices to show $a\not\in V_m'$ for any $m \in \bigcup_{k < k_*} J_k$.  By way of contradiction, if $a=rx$ for some $x\in M_i$ for some $i < 2$, then it follows from the Claim
that $a\in V_m^{M_i}$, contradicting $M_0\models\Phi_1$.  Thus, $N\models \Phi_1$.


But also, by incrementing $k^*$ by 1, we see that $N\in \K$.

%
%

It remains to show that both $M_0, M_1$ are $\L_X^+$-substructures of $N$.  As we know already that both are substructures of $N'$, 
the only non-trivial point to check is that $V_{n(e)}^N\cap M_i=0$ for each $e\in E$ and $i<2$.  
Say $re\in V_{n(e)}^N$ is non-zero.    If $re$ were in (say) $M_0$, then as $Re$ is a simple module, we would also have $e\in M_0$.
But, as $e\in E$, $e\not\in R*X'$; hence $e \not \in R * X_0$.   Since $M_0\models\Phi_1$, we would have $e\in V_m^{M_0}\subseteq V_m'$ for some $m$, contradicting $e\in E$. \qed
\end{proof}

\begin{Proposition}  \label{M}  There is a countable tagged left $R$-module $M= (V, V_n)_{n \in \omega}$ with $I$-basis $X$ and an equivalence relation $E$ on $X$ satisfying: 
\begin{enumerate}
\item  $(V, X, V_n)_{n \in \omega} \models \Phi_1$, and so in particular $R*X$ is invariant in $(V, V_n)_{n \in \omega}$;
\item  $X/E$ is infinite; and
\item  Every $h\in Sym(X/E)$ lifts to an $\L^+_X$-automorphism of $(M, X)$, i.e., there is $\sigma \in \mbox{Aut}(M, X)$ such that for every $x\in X$,
$\sigma(x)/E=h(x/E)$.
\end{enumerate}
\end{Proposition}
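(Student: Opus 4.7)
The plan is to realize $M$ as a direct limit of structures in $\K$, using the Fraisse-like construction from the appendix. By Lemma~\ref{modsuit}, $\K$ is strongly suitable, so that construction applies and produces a countable $\L_X^+$-structure $M^* = (V,X,V_n)_{n \in \omega}$, expressible as the union of an $\omega$-chain from $\K$, together with an equivalence relation $E$ on $X$ such that $X/E$ is infinite and every $h \in Sym(X/E)$ lifts to an $\L_X^+$-automorphism of $M^*$. The tagged $R$-module in the conclusion is then the $\L^+$-reduct of $M^*$, and clauses (2) and (3) come for free from the appendix construction.

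What remains is clause (1), i.e.\ that $M^* \models \Phi_1$; by Lemma~\ref{RXinvariant} this will give the invariance of $R*X$ in $(V,V_n)_{n\in\omega}$. The $\Phi_0$ part is routine, since ``$X$ is an $I$-basis of $V$'' is preserved under unions of chains of $\L_X^+$-substructures (an alleged $I$-dependence would already be witnessed at some finite stage). For the $\Phi_1$ biconditional I would argue element-by-element: given $a \in \Delta(V)$, choose $n$ so that $a \in V^{M_n}$; since annihilators are computed identically in $M_n$ and in $M^*$, we have $a \in \Delta(V^{M_n})$, and since each $M_n$ is an $\L_X^+$-substructure of $M^*$, the predicates ``$a \in R*X$'' and ``$a \in V_k$'' agree between $M_n$ and $M^*$. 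Applying $M_n \models \Phi_1$ then transfers the biconditional to $M^*$.

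I expect the main obstacle to lie not in this transfer but in invoking the appendix theorem with the correct equivariance output: one needs a limit construction that produces not merely a homogeneous structure but a distinguished equivalence relation on the skeleton $X$ whose quotient supports a full symmetric-group action by automorphisms of $M^*$. This is precisely what the ``strongly suitable'' hypothesis is designed to deliver, and the disjoint amalgamation proved in Lemma~\ref{modsuit}---in particular the device of reserving the infinite block $J_{k^*}$ to absorb the newly-arising simple submodules $Re$ lying outside $R*X' \cup \bigcup V'_n$---was crafted to be compatible with such symmetries. Once the appendix statement is in hand, the rest is the short transfer argument described above.
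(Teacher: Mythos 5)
Your proposal is correct and matches the paper's proof: both apply Theorem~\ref{GeneralSuitable} (justified by Lemma~\ref{modsuit}), obtain clauses (2) and (3) directly from the appendix construction, and then observe that clause (1) follows because $\Phi_1$ is preserved under unions of chains of $\L_X^+$-substructures. Your element-by-element transfer argument for the $\Phi_1$ biconditional is the intended content of that preservation claim, with the one unstated detail — that for $a \in V^{M_n}$ the predicate $a \in R*X$ is the same computed in $M_n$ as in $M^*$ — being exactly what Lemma~\ref{algebra} (with $Y = X^{M_n} \subseteq X^{M^*}$) supplies.
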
  


\begin{proof}  Apply Theorem~\ref{GeneralSuitable} to get
an $\L_X$-structure $M$ and equivalence relation $E$ on $X^M$ as there.
Items (2) and (3) are immediate; item (1) uses that models of $\Phi_1$ are closed under unions of chains. \qed
\end{proof}



Fix $M$ as in the previous proposition.

\begin{Definition}\label{E}
	{\em  Let $\E$ be the equivalence relation on $R*X$ defined via $rx \E sy$ if and only if $x E y$, for all $x, y \in X$ and $r, s \in R \backslash I$. So the classes of $\E$ are just the $\sim$-saturations of the classes of $E$, which gives a bijection between the classes of $\E$ and the classes of $E$.}
\end{Definition}

It follows that every $h\in Sym((R*X)/\E)$ lifts to an automorphism of $(M, X)$.


\subsection{Constructing the engine $N$}

We continue; so we have fixed the countable ring $R$, the maximal left ideal $I$, the model $(M, X, V_n)_{n \in \omega}$ and equivalence relations $E, \E$, as in Proposition ~\ref{M} and Definition~\ref{E}.

\begin{Proposition}  \label{N}  There is a countable tagged left $R$-module 
$N = (V', V'_n)_{n \in \omega}$ with distinguished
$I$-basis $X_0 \sqcup X_1$; equivalence relations $\E_0,\E_1$ on $R*X_0,R*X_1$, respectively;
and  a bijection $k:[(R*X_0)/{\sim}]^2\rightarrow R*X_1/\E_1$
 satisfying:
\begin{enumerate}
\item  $(R*X_0)/\E_0$ is infinite; and
\item  The following sets are invariant in $N$:
 \begin{enumerate}
 \item  $R* X_0$ and $R*X_1$;
 \item $\E_0$ and $\E_1$; and
 \item  the set $K \subseteq R*X_0 \times R* X_0 \times R* X_1$ of all triples $(rx, sy, tz)$ satisfying that $k(rx/{\sim}, sy/{\sim}) = tz/\mathcal{E}_1$. (We would like to say $k$ is invariant but it isn't of the right form.)
 \end{enumerate}
 \item  Every $h\in Sym((R*X_0)/\E_0)$ lifts to an automorphism of $(N, X_0, X_1)$ (i.e. an automorphism of $N$ fixing $X_0$ and $X_1$ setwise).
 \end{enumerate}
 \end{Proposition}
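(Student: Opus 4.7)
My plan is to mirror the argument of Proposition~\ref{M} in a slightly enriched language, building $N$ via Theorem~\ref{GeneralSuitable} applied to a class $\K'$ of finite tagged modules that carries both basis pieces $X_0, X_1$ together with an encoding of the bijection $k$.

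First, I would augment $\L^+_X$ with a second unary predicate $Y$ to form $\L^{++}$, and let $\K'$ consist of finite $\L^{++}$-structures $(V', X, Y, V'_n)_{n \in \omega}$ such that: $V'$ is a finite $R$-module with $X \cup Y$ an $I$-basis and $X, Y$ disjoint; $\Phi_1$ holds for the basis $X \cup Y$; a designated tag $V'_{n_0}$ equals $RY$ (making $R*X$ and $R*Y$ separately recoverable up to $\sim$); a second designated tag $V'_{m_0}$ is the submodule generated by all elements $a + b + c$, with $a, b \in X$, $a \neq b$, $c \in Y$, drawn from some finitely-supported ``partial matching'' --- a partial injection from $2$-subsets of $X$ into $Y$; and, as in $\K$, only finitely many $V'_n$ are nonzero, with an infinite tail of indices reserved.

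Second, I would verify that $\K'$ is strongly suitable. Disjoint amalgamation mirrors Lemma~\ref{modsuit}: combine the underlying $R$-modules over their shared substructure, take the union of the two partial matchings (consistent by the amalgamation), and assign fresh indices from the reserved tail to any ``extras'' $\delta \in \Delta(V') \setminus R*(X \cup Y)$ not captured by the edge tag. Applying Theorem~\ref{GeneralSuitable} yields a countable $\L^{++}$-structure $N^*$ with distinguished basis $X_0 \sqcup X_1 = X^{N^*} \sqcup Y^{N^*}$ and an equivalence relation $E$ on $X_0 \cup X_1$ respecting the partition. Take $N$ to be the reduct of $N^*$ to $\L^+$ (forgetting the predicates $X, Y$); set $E_0 = E|_{X_0}$, $E_1 = E|_{X_1}$; and let $\E_0, \E_1$ be their $\sim$-saturations per Definition~\ref{E}.

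Third, I would extract $k_0 : [X_0]^2 \to X_1/E_1$ by the rule $k_0(\{a, b\}) = [c]_{E_1}$ iff $a + b + c \in V'_{m_0}$. The key calculation, using $I$-independence: if $a + b + c = \sum_i r_i(a_i + b_i + c_i)$ with each $(a_i, b_i, c_i)$ a valid triple, then after discarding summands with $r_i \in I$ (which contribute $0$) the remaining indices satisfy $c_i = c$ by independence of the $X_1$-basis, forcing $\{a_i, b_i\} = \{\alpha, \beta\}$ where $\{\alpha, \beta\}$ is the pair generically matched with $[c]_{E_1}$; since $a_i + b_i = \alpha + \beta$ as an unordered sum, the $X_0$-part evaluates to $(\sum_i r_i)(\alpha + \beta) = \alpha + \beta$ regardless of the characteristic of $R/I$, forcing $\{a, b\} = \{\alpha, \beta\}$. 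Thus $k_0$ is well-defined, and Fraisse-style genericity makes it a total bijection; then $k$ is the induced bijection between $[(R*X_0)/\sim]^2$ and $R*X_1/\E_1$.

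The conditions follow: (1) from $|X_0/E_0| = \omega$; (2a) from $\Phi_1$ together with the tag $V'_{n_0} = RX_1$; (2b) from the equivalence-relation structure delivered by Theorem~\ref{GeneralSuitable} for $\E_0$, with $\E_1$ additionally definable from $K$ and $\sim$; (2c) from invariance of the edge tag $V'_{m_0}$ combined with that of $X_0, X_1, \E_1$; and (3) from the theorem's lifting property: given $h \in \mathrm{Sym}((R*X_0)/\E_0) \cong \mathrm{Sym}(X_0/E_0)$, pick any set-theoretic lift $\sigma_0$ of $h$ to $X_0$, let $\bar h_1$ be the $k_0$-induced permutation of $X_1/E_1$ (i.e., $\bar h_1([c]_{E_1}) = k_0(\{\sigma_0(a), \sigma_0(b)\})$ for $\{a, b\} = k_0^{-1}([c]_{E_1})$), and apply Theorem~\ref{GeneralSuitable} to the partition-respecting permutation $(h, \bar h_1)$ of $(X_0 \cup X_1)/E$ to obtain an automorphism of $N^*$, which as a reduct is an automorphism of $N$ fixing $X_0, X_1$ setwise. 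The main obstacle will be step one: setting up $\K'$ so that the individual tags required to enforce $\Phi_1$ on ``non-edge'' extras mesh correctly with the edge-tag encoding during amalgamation, without placing spurious constraints on the automorphism group of the resulting limit.
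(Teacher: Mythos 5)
Your plan diverges structurally from the paper's proof of Proposition~\ref{N}. The paper builds $N$ in two stages: first, Proposition~\ref{M} supplies a single model $M=(V,X,V_n)_{n\in\omega}$ with equivalence relation $E$; then $N^-$ is defined \emph{externally} as the doubling $V\times V$ with $X_0 = X\times 0$, $X_1 = 0\times X$; and only after this does one choose $k_0:[X_0]^2\to R*X_1/\E_1$ as an arbitrary bijection between two sets that are already known to be countably infinite. The submodules $RT$ and $RQ$ are then appended as two further tags. This ordering matters: $\E_1$ must already exist before $k_0$ can be chosen to hit each $\E_1$-class exactly once and before $T=\{x+y+z: \{x,y\}\in[X_0]^2,\, z\in k_0(\{x,y\})\}$ can be defined so as to absorb the full $\E_1$-saturation of each target.

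Your attempt to fold $k_0$ into the Fra\"iss\'e class $\K'$ via an edge tag $V'_{m_0}$ encoding a partial injection $[X]^2\hookrightarrow Y$ runs into a genuine gap at precisely the step you flag as the ``main obstacle.'' The equivalence relation $E$ (and hence $E_1$ and $\E_1$) is \emph{not} in the language $\L^{++}$; in the proof of Theorem~\ref{Suitable}, $E$ lives in the auxiliary class $\K_*$ and is added externally, so the finite structures in $\K'$ cannot reference it. Consequently the edge-tag matching is a map into $X_1$, not into $X_1/E_1$, and in the generic limit the composite $[X_0]^2\to X_1\to X_1/E_1$ is a highly non-injective surjection: $E_1$-classes are uncoordinated with the matching, so each class absorbs infinitely many matched elements. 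But injectivity of $k_0$ is load-bearing --- the proof of Theorem~\ref{taggedBC} recovers $\E_1$ from the relation $a_0\,\E_1\, a_1 \iff K^{-1}(\{a_0\})=K^{-1}(\{a_1\})$, which collapses if distinct $\E_1$-classes share a $k_0$-preimage. Your appeal to ``Fra\"iss\'e-style genericity'' cannot repair this, since genericity of the limit is oblivious to a relation that is absent from the language. To rescue your approach you would have to place a binary relation for $E$ directly into the language of $\K'$ and constrain the edge tag to meet each $E$-class of $Y$ exactly once, then re-prove disjoint amalgamation and strong suitability under that constraint; that is a substantive redesign of Lemma~\ref{modsuit} and of the appeal to Theorem~\ref{GeneralSuitable}, not a routine variant. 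The paper's two-stage construction sidesteps all of this by deferring the choice of $k_0$ until after $\E_0,\E_1$ are in hand, at the small cost of having to separately verify that the externally added tags $RT$, $RQ$ are preserved by the product automorphisms $\sigma_0\times\sigma_1$.
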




\begin{proof}  We begin by defining a doubling $N^-$ of $M$, and then we name some additional submodules. To begin, let the underlying $R$-module $V'$ be $V \times V$, where recall $V$ is the underlying module of $M$. Let $X_0 = X \times 0$ and let $X_1 = 0 \times X$, so $X_0 \cup X_1$ is an $I$-basis of $V'$. Let $N^-$ be $V'$ equipped with the named submodules $V \times 0, 0 \times V, V_n \times 0$ and $0 \times V_n$, for $n < \omega$.  Note that the projection maps $\pi_0, \pi_1: V' \to V$ are invariant in $N^-$. Further, since $R*X$ is invariant in $M$, we get that $R* X_0$ and $R* X_1$ are both invariant in $N^-$. Finally, note that given $\sigma, \tau \in \mbox{Aut}(M)$, $\sigma \times \tau$ is an automorphism of $N^-$, and in fact every automorphism of $N^-$ is of this form. Let $E_i, \E_i$ be the copies of $E, \E$.
	
As both $X_0$  and $R*X_1/\E_1$ are countably infinite by Corollary~\ref{E},  choose any bijection $k_0:[X_0]^2\rightarrow R*X_1/\E_1$. 
Recall that since each $Rx$ is simple, there is a natural bijection between $X_0$ and $(R*X_0)/{\sim}$, namely $x\mapsto x/{\sim}$.  Composing $k_0$ with this bijection yields a bijection
 $$k:[(R*X_0)/{\sim}]^2\rightarrow (R*X_1)/\E_1$$


Additionally, let $$T:=\{x+y+z:\{x,y\}\in [X_0]^2, z\in k_0(\{x,y\})\}$$
and let $$Q:=\{z\in X_1:\hbox{there exist $\{x,y\}\in [X_0]^2$ with $x+y+z\in T$ and $\E_0(x,y)$}\}.$$

Let $N:=(N^-,RQ, RT)$, an expansion of $N^-$ by two submodules.  We verify that $N, X_0, X_1, \E_0, \E_1$ satisfy the requirements of the Proposition.  We already noted that $N$ is a countable tagged $R$-module with $I$-basis $X_0 \cup X_1$, and that $R* X_0$ and $R* X_1$ are invariant.

We first establish the following claim.

\medskip





\medskip
\noindent{\bf Claim.}  $R*T$ is invariant in $N$.
\begin{proof} It is enough to show that $w \in R*T$ if and only if $w \in RT$ and $\pi_1(w) \in R*X_1$, since $RT, \pi_1$ and $R*X_1$ are all invariant in $N$. The forward direction is clear. For the reverse, suppose $w \in RT$ has $\pi_1(w) \in R*X_1$. Write $w=\sum_{i<n} r_it_i$ with each $r_i\in R$ and each $t_i\in T$.
By combining repeats, we may assume $\{t_i:i<n\}$ is without repetition, and by eliminating zero terms, we may assume each $r_i\in R\setminus I$.
Write each $t_i$ as $(x_i+y_i)+z_i$ and note that $\pi_1(t_i)=z_i$. Further, note that $(z_i: i < n)$ are all distinct, as if $z_i = z_j$, then also $t_i = t_j$, contrary to arrangement.


We have $\pi_1(w)=\sum_{i<n} r_iz_i$.  But also, $\pi_1(w)\in R*X_1$ implies that $\sum_{i<n} r_iz_i=r^*z^*$ for some $r^*\in R\setminus I$ and some $z^*\in X_1$.
By $I$-independence of $X_0 \cup X_1$, we conclude that $n=1$, as desired.  \qed
\end{proof}

It now follows that $K$ is invariant, since it is the set of all tuples $(a, b, c) \in R* X_0 \times R*X_0 \times  R*X_1$ such that there exist $r, s, t$ with $ra + sb + tc \in R * T$ (using simplicity of $Ra, Rb$ and $Rc$).

We next show that $\E_1$ and $\E_0$ are invariant in $N$. Towards this, we make the following definition, which will also be used in the next section.

\begin{Definition}\label{Kdef}
{\em Suppose $A \subseteq R * X_1$. Then let $K^{-1}(A) = \{(b_0, b_1) \in R* X_0 \times R * X_0: (b_0, b_1, a) \in K\}$. }
\end{Definition}

\medskip
\noindent{\bf Claim.} $\E_1$ and $\E_0$ are invariant in $N$.

\begin{proof}
	Note that if $a \in R* X_1$, say $a \in k_0(\{x, y\})$, then $K^{-1}(a) = (x/\sim) \times (y/\sim)$, which depends precisely on $a/\mathcal{E}_1$. Hence, given $a_0, a_1 \in R * X_1$, we have $a_0 \E_1 a_1$ if and only if $K^{-1}(\{a_0\}) = K^{-1}(\{a_1\})$, and so $\E_1$ is invariant in $N$. 
	
	Also, note that $R * Q$ is invariant in $N$, since by Lemma~\ref{algebra} applied to $Q \subseteq X_1$ it is equal to $RQ \cap R*X_1$. Hence $K^{-1}(R*Q)$ is invariant, but $K^{-1}(R*Q) = \E_0$, so $\E_0$ is invariant.  \qed
\end{proof}

Finally, it suffices to show that every $h\in Sym((R*X_0)/\E_0)$ lifts to an automorphism of $N$.  Given such an $h$, by Corollary~\ref{E} there is an automorphism
$\sigma_0\in Aut(M \times 0, X_0)$ lifting $h$. Note that $\sigma_0$ must also preserve $\E_0$.


 Let $h_1\in Sym(X_1/E_1)$ be the unique permutation satisfying
$$h_1(k_0(\{x,y\}))=k_0(\{\sigma_0(x),\sigma_0(y)\})$$
for all $\{x,y\}\in [X_0]^2$. By Proposition~\ref{M}, we can find an automorphism $\sigma_1\in Aut(0 \times M, X_1)$ that is a lifting of $h_1$.
Then we can define $\sigma\in \mbox{Aut}(N^-)$ by $\sigma(a+b)=\sigma_0(a)+\sigma_1(b)$ for all $a\in M \times 0$, $b\in 0 \times M$.
We know that $\sigma$ preserves $X_0, X_1$ and $\E_0$ setwise, and by arrangement $\sigma$ commutes with $k_0$. Since $RT$ and $RQ$ can be defined from $X_0, X_1, k_0$ and $\E_0$, $\sigma$ also preserves $RT$ and $RQ$ setwise; hence $\sigma\in Aut(N, X_0, X_1)$.   \qed
\end{proof}

\subsection{Tagged $R$-Modules Are Borel complete for every countable ring $R$}

Using the engine $N$ given in Proposition~\ref{N}, we prove Theorem~\ref{taggedBC}, that the theory of tagged left $R$-modules is Borel complete for any countable ring $R$.

\begin{proof}  Given $R$, fix a maximal left ideal $I\subseteq R$ and choose $N, X_0, X_1, \E_0, \E_1, K$ as in Proposition~\ref{N}.  Note that by Remark~\ref{shift},
any expansion $(N,U)$ by a submodule is still a tagged left $R$-module.
By a countable graph we mean an irreflexive, symmetric binary relation.  Since $R*X_0 / \E_0$ is countably infinite,
we may take this set to be the universe of our graphs.
 
 Now, for every such graph $(R*X_0/\E_0, G)$, let $Q_G$ be the set of all $z \in X_1$ such that there are $a,b \in R*X_0 $ with $(a, b, z) \in K$ and with $(a/\E_0, b/\E_0) \in G$. Let  $U_G=RQ_G$, the smallest $R$-submodule containing $Q_G$.  
 
 The map $G\mapsto (N,U_G)$ is clearly Borel, so we must show it preserves isomorphism in both directions.   
 
 First, note that by Lemma~\ref{algebra}, $R*Q_G$ is invariant in $(N, U_G)$, as it is equal to $U_G \cap X_1$. Hence $G$ can be recovered from $(N, U_G)$, since in the notation of Definition~\ref{Kdef}, $K^{-1}(R*Q_G)$ is the set of all $(a, b) \in R*X_0 \times R*X_0$ with $(a/\E_0, b/\E_0) \in G$, and $\E_0$ is invariant in $N$. This shows that if $(N, U_G) \cong (N, U_{G'})$ then also $G \cong G'$.
 
Conversely, suppose $h:G\rightarrow G'$ is a graph isomorphism. Then $h\in Sym(R*X_0/\E_0)$ is a permutation.  Let $\sigma\in Aut(N, X_0, X_1)$ be a lifting of $h$. An inspection of the definition of $Q_G$ shows that $\sigma$ takes $Q_G$ to $Q_{G'}$, which shows that $(N, U_G) \cong (N, U_{G'})$.   \qed
\end{proof}

 \section{The main Borel reductions}  \label{transfer}

 This section gives a number of Borel reductions that are used in Section~\ref{TWO} to prove Theorem~\ref{big}.

 \subsection{Quotient rings and free-like tagged $R$-modules}
 
  Although incredibly simple, our first reduction will be used many times.
 
 \begin{Lemma}  \label{R/I}  For any countable ring $R$ and any 2-sided ideal $I\subseteq R$, there is a Borel reduction from 
 the class of countable left $R/I$-modules to the class of countable left $R$-modules.  Thus, if the theory of left $R/I$-modules is Borel complete,
 then so is the theory of left $R$-modules.
 \end{Lemma}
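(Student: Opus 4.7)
The plan is to define $f$ on an $R/I$-module $M$ with universe $\omega$ by letting $f(M)$ have the same underlying set and addition as $M$, with scalar multiplication by $r \in R$ defined via the quotient map, i.e.\ $r \cdot^{f(M)} m := (r+I) \cdot^M m$. The $R$-module axioms for $f(M)$ follow immediately from the $R/I$-module axioms for $M$ together with the fact that $r \mapsto r+I$ is a ring homomorphism. Moreover $f(M)$ is an $R$-module in which every element of $I$ acts as $0$.

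Next, I would check Borelness. For each fixed $r \in R$, the interpretation $\cdot_r^{f(M)} \subseteq \omega^3$ is literally equal to $\cdot_{r+I}^M$, and the addition symbol is unchanged, so the map $M \mapsto f(M)$ is a fixed relabeling of the predicates that define structures in $Str_{\L_{R/I}}$ and $Str_{\L_R}$, which is continuous, hence Borel. Since $R$, $I$ and the quotient $R/I$ are all fixed countable objects, no further measurability issues arise.

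Finally I would verify that $f$ reduces isomorphism. In the forward direction, any $R/I$-module isomorphism $\phi : M \to N$ is in particular a bijection of $\omega$ preserving addition and satisfying $\phi((r+I) \cdot^M m) = (r+I) \cdot^N \phi(m)$ for every coset $r+I$; unpacking the definition of $f$, this says exactly that $\phi$ is an $R$-module isomorphism $f(M) \to f(N)$. Conversely, an $R$-module isomorphism $\phi : f(M) \to f(N)$ preserves addition and satisfies $\phi(r \cdot^{f(M)} m) = r \cdot^{f(N)} \phi(m)$ for every $r \in R$; since both actions factor through $R/I$, this is the same as $\phi((r+I) \cdot^M m) = (r+I) \cdot^N \phi(m)$, so $\phi$ is an $R/I$-module isomorphism.

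There is essentially no obstacle: the construction is purely formal, and the second sentence of the lemma follows immediately by composing the Borel reduction constructed above with any Borel reduction witnessing Borel completeness of the theory of $R/I$-modules.
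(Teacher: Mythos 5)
Your proposal is correct and is essentially identical to the paper's proof, which also expands an $R/I$-module to an $R$-module via $ra := (r+I)a$, observes that the identity map is the reduction, and invokes transitivity for the second sentence. The only difference is that you spell out the routine verifications (Borelness and preservation of isomorphism in both directions) that the paper leaves implicit.
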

 
 \begin{proof}  Any $R/I$-module $M$ can be naturally expanded to an $R$-module by positing $ra:=(r+I)a$ for every $a\in M$.  Thus, the identity map
 gives a Borel reduction from left $R/I$-modules to left $R$-modules.  The second statement is immediate because of the transitivity of Borel reductions.  \qed
 \end{proof}


\begin{Definition}  {\em  Let $R$ be any countable ring.  A tagged left $R$-module $(M,M_n)_{n\in\omega}$ is {\em free-like} if the following  conditions hold:
\begin{enumerate}
\item  The universe of $M$ is isomorphic to the free left $R$-module $\bigoplus_\omega R$;
\item For every $n\in\omega$, both  $M_n$ and the quotient module $M/M_n$ are isomorphic to $\bigoplus_\omega R$.
\end{enumerate}
}
\end{Definition}  

\begin{Remark}\label{freelikeImpliesPure}
If $(M,M_n)_{n \in \omega}$ is free-like, then each $M_n$ is a direct summand of $M$ (since $M/M_n \cong \bigoplus_\omega R$ is projective), and so each $M_n$ is pure in $M$.
\end{Remark}

\begin{Proposition}   \label{freelikeBC} For any countable ring $R$, the class of free-like, tagged left $R$-modules is Borel complete.
\end{Proposition}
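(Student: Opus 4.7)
The plan is to Borel-reduce the class of arbitrary tagged left $R$-modules, which is Borel complete by Theorem~\ref{taggedBC}, to the class of free-like tagged left $R$-modules. The key technical observation I would use is: if $F$ and $F'$ are free $R$-modules of countably infinite rank and $d\colon F'\to F$ is any $R$-linear map, then the graph $\Gamma_d = \{(d(y),y): y\in F'\}$ is a free-like submodule of $F\oplus F'$. Indeed, $(a,b) = (a-d(b),0) + (d(b),b)$ is the unique decomposition witnessing the internal direct sum $F\oplus F' = (F\oplus 0)\oplus \Gamma_d$, and both summands are isomorphic to $\bigoplus_\omega R$. This trick lets me name, indirectly via graphs, submodules of $F$ that are themselves neither free nor direct summands.

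Construction: given a tagged module $(V,V_n)_{n\in\omega}$, choose a surjection $\pi\colon F\twoheadrightarrow V$ with $F = \bigoplus_{v\in V}Re_v$ (adjoining an auxiliary $\bigoplus_\omega R$ summand to force countably infinite rank when $V$ is finite), and let $K=\ker \pi$. Then choose a surjection $d\colon F'\twoheadrightarrow K$ with $F' = \bigoplus_{k\in K}Rf_k$ (again adjoining a free summand to guarantee infinite rank), with $d(f_k)=k$. Set $N = F\oplus F'\cong \bigoplus_\omega R$, and declare the named submodules to be
\begin{enumerate}
\item $N_0 = F\oplus 0$ and $N_1 = 0\oplus F'$, the two coordinate summands;
\item $N_2 = \Gamma_d$, the graph of $d$;
\item $N_{n+3} = F_n\oplus 0$ for each $n\in\omega$, where $F_n := \bigoplus_{v\in V_n}Re_v\subseteq F$.
\end{enumerate}
Each of $N_0,N_1$ and each $F_n\oplus 0$ is spanned by a subset of the chosen basis of $N$ and hence is a free direct summand with free complement, while $\Gamma_d$ is free-like by the initial observation. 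So $(N,N_k)_{k\in\omega}$ is indeed a free-like tagged $R$-module, and the assignment $(V,V_n)\mapsto (N,N_k)$ is manifestly Borel.

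To check that this is a reduction I verify both directions. For reflection of isomorphism, I recover $(V,V_n)$ from $(N,N_k)$: read off $F=N_0$ and $F'=N_1$; recover $d$ from $N_2$, viewed as a functional graph over $N_1$; identify $K=\operatorname{im}(d)$ inside $F$ as $(N_2+N_1)\cap N_0$; then $V\cong F/K$ and $V_n\cong (F_n+K)/K$ where $F_n\oplus 0 = N_{n+3}$. For preservation of isomorphism, any isomorphism $\phi\colon (V,V_n)\to (V',V'_n)$ lifts canonically to $\bar\phi\colon F\to \tilde F$ by $e_v\mapsto e_{\phi(v)}$; this carries $K$ to $\tilde K$ and each $F_n$ to $\tilde F_n$, and the induced isomorphism $K\to \tilde K$ then lifts to $\bar\phi'\colon F'\to \tilde F'$ by $f_k\mapsto f_{\bar\phi(k)}$, intertwining $d$ and $\tilde d$. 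Hence $\bar\phi\oplus \bar\phi'$ is an isomorphism of the output free-like tagged modules.

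The main obstacle is conceptual rather than technical: because $K=\ker\pi$ is generally neither free nor a direct summand of $F$, it cannot itself be among the named free-like submodules, so the naive attempt to name $\tilde V_n = \pi^{-1}(V_n)$ collapses. The graph trick is precisely what circumvents this — by passing to the doubled module $F\oplus F'$ and using $\Gamma_d$, we encode the non-free-like datum $K$ using only free-like submodules, and $\tilde V_n$ is then recovered as $F_n + K$ rather than named directly.
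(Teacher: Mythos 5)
Your construction is sound in spirit and genuinely more economical than the paper's two-step proof, but as written it has a gap in verifying that the output is free-like. The paper first passes to tagged modules with universe $\bigoplus_\omega R$ by naming $\ker\delta$ together with the full preimages $\delta^{-1}(M_n)$ (which need not be direct summands), and then introduces a separate auxiliary free summand for \emph{each} tag, encoding each one individually by the graph trick of Lemma~\ref{help}. You observe that with the basis of $F$ indexed by $V$, the lifts $F_n=\bigoplus_{v\in V_n}Re_v$ are already free direct summands of $F$ with free complement, so the graph trick is needed only once, to code $K$; the actual preimage $\pi^{-1}(V_n)=F_n+K$ is reassembled at recovery time. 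This collapses the paper's countably many auxiliary summands into a single one, and your recovery and lifting arguments are correct.

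The gap: ``free direct summand with free complement'' is not enough. The definition of free-like requires that each $N_k$ and each $N/N_k$ be isomorphic to $\bigoplus_\omega R$, i.e.\ have countably infinite rank, not merely be free. When some $V_n$ is finite --- for instance $V_n=\{0\}$, which always occurs when a tag is trivial --- $F_n=\bigoplus_{v\in V_n}Re_v$ has finite rank, so $N_{n+3}=F_n\oplus 0$ is not isomorphic to $\bigoplus_\omega R$ over any ring with the invariant basis number property. Your auxiliary $\bigoplus_\omega R$ summand only patches the case where $V$ itself is finite, not the case where some $V_n$ is; the paper addresses exactly this point (replacing $\{d_{n,\mathbf{a}}:\mathbf{a}\in M_n\}$ by $\{d_{n,\mathbf{a},k}:\mathbf{a}\in M_n,\,k\in\omega\}$ when $M_n$ is finite). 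A clean fix for your version is to index the basis of $F$ by $V\times\omega$ rather than $V$, setting $\pi(e_{v,i})=v$ and $F_n=\bigoplus_{(v,i)\in V_n\times\omega}Re_{v,i}$; since $V_n$ always contains $0$, this makes every $F_n$ have rank exactly $\omega$ and simultaneously disposes of the finite-$V$ case (do the analogous thing for $F'$ when $K$ is finite). With that amendment your argument is complete.
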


\begin{proof}  Fix a countable ring $R$.  For the ease of notation and understanding we break this reduction into two parts.
We first show that the class of tagged left $R$-modules is Borel reducible to the class of tagged left $R$-modules whose universe is isomorphic to  $\bigoplus_\omega R$.
To see this, suppose we are given a countable, tagged left $R$-module $(M,M_n)_{n\in\omega}$.  
Let $M^*:=\bigoplus_M R$, i.e., the free left $R$-module with basis $\{\ee_a:a\in M\}$.  
Define an $R$-module homomorphism
$$\delta:M^*\rightarrow M$$
via $\delta(\sum_{a\in M} r_a \ee_a)=\sum_{a\in M} r_a a$.  [Since $M^*$ is a direct sum, all but finitely many $r_a=0$, so this definition makes sense.]
Let $$f(M):=(M^*,\ker(\delta),\delta^{-1}(M_n))_{n\in\omega}$$
Visibly, $f(M)$ is a tagged left $R$-module whose universe is isomorphic to $\bigoplus_\omega R$.  It is easily checked that the function $f$ is Borel and that 
$f(M)\cong f(M')$ whenever $M$ and $M'$ are isomorphic tagged $R$-modules.
For the converse, it suffices to show we can recover $M/{\cong}$ from $f(M)/{\cong}$. But this is clear, since $(M, M_n)_{n \in \omega} \cong (M_*/\ker(\delta), \delta^{-1}(M_n)/\ker(\delta))_{n \in \omega}$.

This works, except in the case when $M^*$ is finite-dimensional (i.e. $M$ is finite); in that case, replace $M^*$ by $\bigoplus_{M \times \omega} R$.

For the second step, we note the following general fact.

\begin{Lemma}   \label{help} Suppose $M=M_0\oplus M_1$ is any direct sum of left $R$-modules, and let $h:M_0\rightarrow M_1$ be any $R$-module homomorphism.
Let $N=\{a-h(a):a\in M_0\}$.  
Then $N\cong M_0$ and $M=N\oplus M_1$, hence $M/N\cong M_1$.  In particular, if both $M_0$ and $M_1$ are free, then so are
$N$ and $M/N$.
\end{Lemma}

\begin{proof}  There is a natural surjective homomorphism $f:M_0\rightarrow N$ given by $f(a)=a-h(a)$.  However, if $b\in \ker(f)$, then $b\in M_0$ and $b=h(b)$.
As $M=M_0\oplus M_1$, $b=0$.  Thus, $f$ is an $R$-module isomorphism.   This isomorphism extends naturally to an automorphism $f^*:M\rightarrow M$
as $f^*(a+b)=a-f(a)+b$. Since $f^*$ is the identity on $M_1$ and takes $M_0$ to $N$, we get that $M=N\oplus M_1$.  It follows immediately that $M/N\cong M_1$.  \qed
\end{proof}


We now give a Borel reduction from tagged left $R$-modules with universe $\bigoplus_\omega R$ to free-like tagged left $R$-modules.  Fix a basis $\{\ee_i:i\in\omega\}$ for $\bigoplus_\omega R$.
Given $M=(\bigoplus_\omega R,M_n)_{n\in\omega}$, 
let $$J:=\{\ee_i:i\in\omega\}\cup \bigcup_{n\in\omega} \{\dd_{n,\aa}:\aa\in M_n\}$$
[if some $M_n$ is finite, take $\{d_{n,\aa,k}:\aa\in M_n,k\in\omega\}$ in place of  $\{\dd_{n,\aa}:\aa\in M_n\}$.]
Let $\bigoplus_J R$ be the free $R$-module with basis $J$; this will be the underlying module of our output $f(M)$.

Let $U_*$ be the submodule of $\bigoplus_J R$ generated by $(\mathbf{e}_i: i \in \omega\}$. Note that this is the domain of $M$, so each $M_n$ is a submodule of $U_*$.
Also, for each $n\in\omega$, let $U_n$ denote the $R$-submodule of $\bigoplus_J R$ generated by $\{d_{n,\aa}:\aa\in M_n\}$.

Let $\epsilon_n:U_n\rightarrow U_*$ be the $R$-module homomorphism generated by the map $\dd_{n,\aa}\mapsto \aa$.  
Note that the image of $\epsilon_n$ is $M_n$. Finally, let $V_n:=\{b-\epsilon_n(b):b\in U_n\}$ and let $N(M)$ be the tagged left $R$-module
$$(\bigoplus_J R, U_*, U_n,V_n)_{n\in\omega}$$
Clearly, $N(M)$ is a tagged left $R$-module whose universe is isomorphic to $\bigoplus_\omega R$, with $U_*$, $N(M)/U_*$, $U_n$ and $N(M)/U_n$ all isomorphic to $\bigoplus_\omega R$.
As well, it  follows from Lemma~\ref{help} that each $V_n$ and each $N(M)/V_n$ is isomorphic to $\bigoplus_\omega R$.
Thus, $N(M)$ is a free-like tagged left $R$-module.

It is easily checked that the mapping $M\mapsto N(M)$ is Borel and that $M\cong M'$ implies $N(M)\cong N(M')$.  For the reverse direction, it suffices to show we can recover $M/{\cong}$ from $N(M)/{\cong}$. In fact, we recover $M$ on the nose from $N(M)$. To see this, note that for a given $n$, and for all $(b, c) \in N(M)$, we have that $(b, c) \in \mbox{graph}(\epsilon_n)$ if and only if $b \in U_n$ and $b - c \in V_n$. Hence we can recover $M_n$ as the image of $\epsilon_n$.  \qed
\end{proof}

\subsection{Rings with a central, non-zero divisor, non unit element}


The goal of this subsection is to prove the first of our three main Borel reductions.

\begin{Theorem}  \label{A}  If $R$ is any countable ring with a central element $r\in R$ that is a neither a zero divisor nor a unit, then the theory of left $R$ modules is Borel complete.
\end{Theorem}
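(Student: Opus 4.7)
By Proposition~\ref{freelikeBC}, it suffices to exhibit a Borel reduction from the class of free-like tagged left $R$-modules to the class of (plain) left $R$-modules. Given a free-like tagged $(M,M_n)_{n\in\omega}$, the plan is to use the central non-zero-divisor, non-unit $r$ to encode each tag $M_n$ as a level of $r$-adic divisibility in an extension of $M$. Concretely, I would set $V := (M \oplus F)/K$, where $F = \bigoplus_{n\in\omega,\,a\in M_n} R\cdot e_{n,a}$ is free on the indicated generators, and $K$ is the submodule generated by the $R$-linearity relations in $a$ for each fixed $n$ together with the defining relations $r^{n+1}e_{n,a} - a$. In $V$, every $a\in M_n$ becomes divisible by $r^{n+1}$, witnessed by the image of $e_{n,a}$.

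The first step is to verify that the canonical map $M\to V$ is injective, so that each $M_n$ is genuinely an $R$-submodule of $V$. This uses that $r$ is a non-zero-divisor: an element of $M\cap K$ projects in $F$ to a combination $\sum_i s_i r^{n_i+1}e_{n_i,a_i} = 0$, and freeness of $F$ together with $r$ being a non-zero-divisor forces each $s_i = 0$, whence the corresponding $M$-combination also vanishes. Purity of each $M_n$ in $M$ (Remark~\ref{freelikeImpliesPure}) should then propagate to compatibility between the $M_n$ and the $r$-adic filtration on $V$.

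For the recovery direction, I would extract $(M,M_n)_{n\in\omega}$ from $V$ in an automorphism-invariant way using the $r$-adic filtration $\hat{M}_k := r^k V\cap M$. A direct calculation (using purity and that $r$ is a non-zero-divisor) yields
$$\hat{M}_k = r^k M + \sum_{n<k} r^{k-n-1}M_n + \sum_{n\geq k} M_n,$$
and by taking successive intersections, differences, and quotients of the sequence $(\hat{M}_k)_{k\in\omega}$ one should separate out each individual $M_n$. The main obstacle, in my view, is to show that $M$ itself is a \emph{characteristic} submodule of $V$ viewed as a plain $R$-module; only then does the filtration-based recovery become an invariant of $V$ up to isomorphism. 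The strategy is to characterize $M$ as the unique maximal submodule of $V$ satisfying a suitable $r$-adic boundedness condition, possibly after mildly enriching the construction to rigidify the embedding $M\hookrightarrow V$. In the special case $R = \Z$ and $r$ a prime, $V$ embeds into $M\otimes\Q$ and is hence torsion-free, yielding the final claim of Remark~\ref{integral}.
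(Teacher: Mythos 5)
Your first reduction (to free-like tagged left $R$-modules via Proposition~\ref{freelikeBC}) agrees with the paper's. The encoding step, however, diverges, and the gap you yourself flag as ``the main obstacle'' is exactly where the paper's technical heart lies and where your plan is incomplete.

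The paper does not try to make $M$ a characteristic submodule of some pushout. Instead it passes to the inverse-limit ring $\Rhat=\varprojlim R/(r^m)$ (after first modding out $\bigcap_m(r^m)$ via Lemma~\ref{R/I}), produces by Baire category a countably infinite $R$-algebraically independent family $\{\gamma_n\}\subseteq\Gamma\subseteq\Rhat$, and sends $\Mbar=(\bigoplus_\omega R, M_n)_n$ (with $M_0=\bigoplus_\omega R$) to the smallest $r$-pure $R$-submodule $G(\Mbar)$ of $\bigoplus_\omega\Rhat$ containing $\bigoplus_\omega R\cup\bigcup_n\gamma_nM_n$. Invariance is then automatic: by Lemma~\ref{Rhatextend}, any $R$-module isomorphism $G(\Mbar)\to G(\Mbar')$ extends uniquely to an $\Rhat$-isomorphism of $\bigoplus_\omega\Rhat$, and by Lemma~\ref{MkChar}, $a\in M_n$ if and only if $\gamma_n a\in G(\Mbar)$. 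So the $M_n$ (including $M=M_0$) are recovered without ever having to establish rigidity of the embedding $M\hookrightarrow V$. Your construction has no analogue of either step.

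There is also a concrete technical worry with using $r^{n+1}$ to tag $M_n$: the consecutive powers of a single element are not algebraically independent, and this creates collisions. Concretely, suppose $a\in M_n\cap M_m$ with $n<m$ and $a\neq0$. Your relations give $r^{n+1}e_{n,a}=a=r^{m+1}e_{m,a}$, so $r^{n+1}(e_{n,a}-r^{m-n}e_{m,a})=0$; if $V$ is $r$-torsion-free (as you need it to be), this forces $e_{n,a}=r^{m-n}e_{m,a}$, so the generator $e_{n,a}$ is redundant and the fact that $a$ was tagged at level $n$ leaves no trace in $V$. Consequently your $r$-adic filtration $\hat M_k$ lumps together all $M_n$ with $n\geq k$ (exactly the $\sum_{n\geq k}M_n$ term in your formula), and the proposed ``intersections, differences, and quotients'' cannot separate them. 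The paper's algebraic independence of the $\gamma_n$ is precisely what prevents this collapse: a relation $\sum_i\gamma_{n_i}\aa_i=\aa$ in $G(\Mbar)$ forces each coefficient to be what it should be, coordinatewise, because no nontrivial polynomial relation over $R$ holds among the $\gamma_n$.

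So: the approach is in the right spirit (reduce to free-like tagged modules, encode tags by divisibility-type data), but it is missing both of the paper's key ideas — the passage to $\Rhat$ with $r$-purity and isomorphism extension, and the use of an $R$-algebraically independent family in place of powers of $r$. Without these, the characteristicness problem you identified is not a loose end but a genuine obstruction. For the same reason, your closing claim about recovering Remark~\ref{integral} is premature; the paper's reduction for $R=\Z$, $r=p$ lands in a submodule of $\bigoplus_\omega\Rhat$, which is torsion-free for free, whereas torsion-freeness of your quotient $V=(M\oplus F)/K$ would itself need an argument.
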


We do this by finding a countably infinite, sufficiently independent subset $\Gamma$ of the natural inverse limit ring $\Rhat$; we then use $\Gamma$ to code a free-like
 tagged left $R$-module into a single left $R$-module.  The construction is analogous to the $p$-adic construction, but here we do not put any primeness condition on $(r)$.

 \begin{Definition}  {\em  For a ring $R$, an element $r\in R$ is {\em central} if $rs=sr$ for every $s\in R$.  
 For central $r$,  we say $r$ {\em is a zero divisor} if some non-zero $s\in R$ satisfies $rs=0$,  and $r$ is a {\em unit} if  $rs=1$ for some $s\in R$.
 As we only use these adjectives  on central elements, we do not need to distinguish between left and right.
 }
 \end{Definition}
 
 Now, for the whole of this subsection, fix countable ring $R$ and a central element $r\in R$ that is neither a zero divisor nor a unit.  The following facts are easily verified:
 
 \begin{Fact}  \label{Rfacts}
 \begin{enumerate}
 \item  For all $m\ge 1$, $(r^m):=\{s\in R: s=tr^m$ for some $t\in R\}$ is a two-sided ideal.
 \item For all $m$, $r^m$ is not a zero divisor. 
 \item Generally (in any ring $R$), if $a \in R$ is central and not a zero divisor, then $ax = ay$ implies $x = y$ for all $x, y \in R$.
 \item  $1\not\in (r)$ and, for $m\ge 1$, $r^m\not\in (r^{m+1})$.
 \end{enumerate}
 \end{Fact}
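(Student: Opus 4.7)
The plan is to verify each of (1)--(4) directly from the hypotheses that $r \in R$ is central, not a zero divisor, and not a unit. None of the four assertions requires any machinery beyond these hypotheses and a short induction.

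For (1) and (2), the key observation is that $r$ central implies $r^m$ central for every $m \geq 1$. With $r^m$ central, the set $(r^m) = r^m R = R r^m$ is simultaneously a left and right ideal, which is (1). For (2), I would induct on $m$: the base case $m=1$ is the hypothesis on $r$, and the inductive step reduces $r^{m+1} s = 0$ to $r(r^m s) = 0$, applies the no-zero-divisor property of $r$ to get $r^m s = 0$, and then invokes the inductive hypothesis.

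Item (3) is the usual cancellation argument: $ax = ay$ yields $a(x-y) = 0$, so $x = y$ follows since $a$ is not a zero divisor (this is where the parenthetical remark in the Definition about not distinguishing left and right becomes convenient). For (4), I would argue by contradiction. If $1 \in (r)$ then $1 = tr$ for some $t \in R$, immediately making $r$ a unit. If $r^m \in (r^{m+1})$ for some $m \geq 1$, write $r^m = t r^{m+1}$; centrality of $r$ lets me rewrite the right side as $r^m(tr)$, and then item (3) applied with $a = r^m$ (which is permissible because $r^m$ is central by centrality of $r$, and not a zero divisor by (2)) yields $1 = tr$, again making $r$ a unit.

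There is no serious obstacle here; this is a routine verification, and indeed the authors label it merely a Fact. The only place where slight care is required is (4), which subtly combines centrality of $r^m$, the no-zero-divisor property for $r^m$ supplied by (2), and the cancellation supplied by (3).
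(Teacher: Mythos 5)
Your verification is correct and complete. The paper offers no proof of Fact~\ref{Rfacts} (it only remarks that the facts are ``easily verified''), so there is nothing to compare against; your argument is the natural elementary one. The only point worth double-checking is the centrality step in~(4): from $r^m = t r^{m+1}$, you need $t r^{m+1} = r^m(tr)$, which follows since $r^m$ is central so that $t r^m = r^m t$, giving $t r^{m+1} = (t r^m) r = (r^m t) r = r^m(tr)$; then cancellation by $r^m$ (licensed by items~(2) and~(3)) yields $1 = tr$, contradicting that $r$ is a non-unit. That chain works exactly as you describe.
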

 
 It follows that $\{(r^m):m\ge 1\}$ is a strictly decreasing sequence of two-sided ideals, hence $J:=\cap_{m\ge 1} (r^m)$ is also a two-sided ideal, hence $R/J$ is also a countable ring.
 Let $\rho:R\rightarrow R/J$ denote the canonical surjection.  
 
 \begin{Lemma}  The element $\rho(r)$ is central, and is neither a zero divisor nor a unit in $R/J$.  
 \end{Lemma}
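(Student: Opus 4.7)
The plan is to verify the three conclusions (central, non-unit, non-zero-divisor) separately, each by a short calculation that pulls elements back to $R$ via $\rho$ and exploits the fact that $J \subseteq (r^m)$ for every $m \geq 1$ together with Fact~\ref{Rfacts}(3) (cancellation by central non-zero-divisors).

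Centrality of $\rho(r)$ is the easiest step: since $\rho$ is a surjective ring homomorphism and $r$ is central in $R$, any element of $R/J$ is of the form $\rho(s)$, and $\rho(r)\rho(s)=\rho(rs)=\rho(sr)=\rho(s)\rho(r)$. For non-unit, I would argue by contradiction: a multiplicative inverse $\rho(s)$ of $\rho(r)$ would give $rs-1 \in J \subseteq (r^2)$, so $rs-1 = tr^2$ for some $t \in R$. Using centrality of $r$ to rewrite $tr^2 = r(tr)$, one gets $1 = r(s-tr) \in (r)$, contradicting Fact~\ref{Rfacts}(4).

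For the non-zero-divisor claim, suppose $\rho(s)\rho(r)=0$, i.e., $sr \in J$. The key point is to show $s \in (r^m)$ for every $m$, which will give $s \in J$ and hence $\rho(s)=0$. Fix $m$; since $sr \in J \subseteq (r^{m+1})$, write $sr = u r^{m+1}$ for some $u \in R$. By centrality of $r$, both sides equal $r \cdot s$ and $r \cdot (ur^m)$ respectively, and then Fact~\ref{Rfacts}(3) (applied to the central non-zero-divisor $r$) lets us cancel $r$ to conclude $s = ur^m \in (r^m)$.

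There is no real obstacle here; the only thing to be slightly careful about is the use of centrality to rearrange products before invoking cancellation, since $R$ is not assumed commutative. Once that is in place, all three items follow immediately.
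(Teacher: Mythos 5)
Your proof is correct and follows essentially the same approach as the paper: pull the equation back to $R$, use the fact that $J\subseteq(r^m)$ for all $m$, and cancel the central non-zero-divisor $r$. The only cosmetic difference is that for the non-unit case you use $J\subseteq(r^2)$ where the paper uses $J\subseteq(r)$ directly, but the argument is the same.
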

\begin{proof} That $\rho(r)$ is central in $R/J$ is easy. 
 To see that $\rho(r)$ is not a zero divisor, choose $s\in R$ such that $\rho(r)\rho(s)=0$ in $R/J$.  This implies $rs\in J$, hence $rs\in (r^{m+1})$ for every $m\ge 1$.
 For each $m\ge 1$, choose $t_m\in R$ so that $rs=t_mr^{m+1}$.  Since $r$ is not a zero divisor, $s=t_m r^m$, so $s\in (r^m)$.  As $m$ is arbitrary, $s\in J$, so $\rho(s)=0$ in $R/J$.
 Finally, to see that $\rho(r)$ is not a unit, first note that for any $s\in R$, $rs-1 \not \in (r^1)$.  [If it were, say $rs-1=rt$ for some $t\in R$, then we would have $r(s-t)=1$, contradicting
 $r$ a non-unit.] In particular, $rs-1\not\in J$, hence $\rho(r)\rho(s)\neq 1$ in $R/J$.  \qed
\end{proof}
 
 In light of Lemma~\ref{R/I}, {\bf we may now further assume that $J=\bigcap_{m\ge 1} (r^m)=\{0\}$.}

%
%
%
 
 \medskip
Associated to the descending sequence $\<(r^m):m\ge 1\>$ of (two sided) ideals we have a commuting family of surjections $\{\pi_{n,m}:R/(r^n)\rightarrow R/(r^m):1\le m\le n\}$.
Let 
$$\Rhat:=\varprojlim R/(r^m)$$ denote the inverse limit of these rings with respect to these maps.

Thus, elements $\delta\in \Rhat$ are sequences $\<\delta(m):m\ge 1\>$ with each $\delta(m)\in R/(r^m)$ and $\pi_{n,m}(\delta(n))=\delta(m)$ whenever $1\le m\le n$.
Note that an element of $R/(r^m)$ is formally a coset of $(r^m)$, and thus is a subset of $R$. Abusing notation somewhat, we identify an element $c\in R$ with the sequence $\<c+(r^m):m\ge 1\>$ in $\Rhat$; as we are assuming $\bigcap_m (r^m) =0$, this identification is faithful. We thus construe $R$ as a subring of $\Rhat$.
As notation, for each $c\in R$, let $$c\Rhat:=\{c\delta:\delta\in \Rhat\}$$

\begin{Lemma} \label{charideal} For any $n\ge 1$, $r^n\Rhat=\{\gamma\in\Rhat:\gamma(n)=(r^n)\}$.
\end{Lemma}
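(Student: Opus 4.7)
My plan is to prove the two containments separately, with the easy direction following immediately from the product in $\Rhat$ and the hard direction requiring an explicit preimage construction driven by the non--zero--divisor property of $r$.

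For the containment $r^n\Rhat \subseteq \{\gamma : \gamma(n) = (r^n)\}$, if $\gamma = r^n\delta$ for some $\delta \in \Rhat$ then by definition $\gamma(n) = r^n\delta(n)$ computed in $R/(r^n)$, and this is the zero coset $(r^n)$ since $r^n \in (r^n)$.

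For the reverse, suppose $\gamma \in \Rhat$ satisfies $\gamma(n) = (r^n)$; the task is to exhibit $\delta \in \Rhat$ with $r^n\delta = \gamma$. The key tool is Fact~\ref{Rfacts}(2)(3): $r^n$ is central and is not a zero divisor, so left multiplication by $r^n$ is an injection $R \to R$ whose image is exactly $(r^n)$. Let $\phi_n : (r^n) \to R$ denote its inverse, so $\phi_n(r^n y) = y$. For each $m \geq 1$, pick any lift $c_{m+n} \in R$ of $\gamma(m+n) \in R/(r^{m+n})$; since $\pi_{m+n,n}(\gamma(m+n)) = \gamma(n) = 0$ in $R/(r^n)$, we have $c_{m+n} \in (r^n) + (r^{m+n}) = (r^n)$, and so $\phi_n(c_{m+n}) \in R$ is defined. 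Set
\[
\delta(m) := \phi_n(c_{m+n}) + (r^m).
\]

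Three points need verification, each reducing to the cancellation property of $r^n$. First, $\delta(m)$ is independent of the chosen lift: two lifts $c,c' \in (r^n)$ of $\gamma(m+n)$ differ by an element of $(r^{m+n})$, say $c - c' = r^n r^m t$, and then $\phi_n(c) - \phi_n(c') = r^m t \in (r^m)$. Second, coherence $\pi_{m+1,m}(\delta(m+1)) = \delta(m)$: compatible lifts $c_{m+n+1}$ and $c_{m+n}$ agree modulo $(r^{m+n})$, so by the same cancellation their $\phi_n$-images agree modulo $(r^m)$. Hence $\delta := \langle \delta(m) \rangle \in \Rhat$. Third, $r^n\delta = \gamma$: in $R/(r^m)$ we compute $r^n\delta(m) = r^n\phi_n(c_{m+n}) + (r^m) = c_{m+n} + (r^m) = \pi_{m+n,m}(\gamma(m+n)) = \gamma(m)$.

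The only real obstacle is keeping the two layers of arbitrary choices (the lifts $c_{m+n}$ of $\gamma(m+n)$, and the ``quotients'' $\phi_n(c_{m+n})$) coherent across the levels of the inverse limit. Both ambiguities are shrunk by exactly one application of $\phi_n$, which lands in the next level's ideal because $r^n$ is not a zero divisor; once this is noted, the verifications are essentially formal.
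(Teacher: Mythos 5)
Your proof is correct and takes essentially the same approach as the paper: both construct $\delta$ by choosing lifts of $\gamma(m+n)$ in $(r^n)$, cancelling $r^n$ (using that $r^n$ is a central non-zero-divisor), and verifying coherence of the resulting sequence. Your $\phi_n$ is just the paper's implicit choice of $t_{m-n}$ with $s_m = r^n t_{m-n}$ repackaged as a named partial inverse; the only superficial difference is that you additionally check well-definedness of $\delta(m)$ across lifts, whereas the paper sidesteps this by fixing one lift $s_m$ per level up front.
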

(To parse the lemma statement, note that $(r^n)$ is the $0$-element of $R/(r^n)$, so saying $\gamma(n) = (r^n)$ is the same as saying $\gamma(n) =0 $.)
\begin{proof}  Fix any $n\ge 1$.  That $(r^n\delta)(n)=(r^n)$ is obvious.  Conversely, choose any $\gamma\in \Rhat$ with $\gamma(n)= (r^n)$ and we will construct some $\delta\in\Rhat$ with
$r^n\delta=\gamma$.  For all $m\ge n+1$, choose $s_m\in \gamma(m)$ (so $s_m\in R$).  Since $\pi_{n,m}(\gamma(m))=\gamma(n)$ whenever $m\ge n$, we have each $s_m\in (r^n)$,
so choose $t_{m-n}\in R$ such that $s_m=r^nt_{m-n}$.

\medskip
\noindent
{\bf Claim}  For $m'\ge m\ge n+1$, $t_{m'-n}\in t_{m-n}+ (r^{m-n})$.

\bp  Since $\pi_{m',m}(\gamma(m'))=\gamma(m)$, $s_{m'}\in \gamma(m)$, so $s_{m'}-s_m\in (r^m)$.  Thus,
$$r^n(t_{m'-n}-t_{m-n})\in (r^m)$$
so $(t_{m'-n}-t_{m-n})\in (r^{m-n})$ since $r^n$ is not a $0$-divisor.  \qed
\end{proof}

By the Claim, $\delta:=\<t_{m-n}+(r^{m-n}):m\ge n+1\>$ is an element of $\Rhat$, and it is easily checked that $\gamma=r^n\delta$, hence $\gamma\in r^n\Rhat$.


%

\begin{Lemma}  In $\Rhat$:
\begin{enumerate}
\item  The element $r$ remains central, a non-zero divisor, and a non-unit; and
\item  $(r^m\Rhat:m\ge 1)$ is a strictly descending sequence of 2-sided ideals with $\bigcap_{m\ge 1} r^m\Rhat=0$.
\end{enumerate}

\end{Lemma}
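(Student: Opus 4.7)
The plan is to let Lemma~\ref{charideal} do most of the heavy lifting: once we identify $r^m\Rhat$ as the kernel of the $m$-th projection, both clauses reduce to coordinate-wise bookkeeping. I will handle (1) first, working one property at a time, and then deduce (2) almost immediately.

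For (1), centrality of $r$ in $\Rhat$ is automatic: addition and multiplication in the inverse limit are coordinate-wise, and $r$ acts on each coordinate $R/(r^m)$ as the central element $r+(r^m)$. For the non-zero divisor claim, I would assume $r\gamma=0$ in $\Rhat$ and argue by induction on coordinates: fixing $m$ and choosing a representative $s\in\gamma(m+1)$, the relation $rs\in(r^{m+1})$ gives $rs=tr^{m+1}$ for some $t\in R$, and Fact~\ref{Rfacts}(2,3) then yields $s=tr^m\in(r^m)$, whence $\gamma(m)=\pi_{m+1,m}(\gamma(m+1))=0$. Since $m$ was arbitrary, $\gamma=0$. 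For the non-unit claim, I would look at the very first coordinate: if $r\delta=1$ in $\Rhat$, then $(r+(r))\delta(1)=1+(r)$ in $R/(r)$, which forces $1\in(r)$, contradicting that $r$ is a non-unit in $R$.

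For (2), centrality immediately makes each $r^m\Rhat$ two-sided, and the inclusions $r^m\Rhat\supseteq r^{m+1}\Rhat$ are obvious. Strictness is where I would invoke Lemma~\ref{charideal}: the image of $r^m$ in $\Rhat$ has $m$-th coordinate $0$ but $(m+1)$-st coordinate $r^m+(r^{m+1})$, which is nonzero by Fact~\ref{Rfacts}(4), so it lies in $r^m\Rhat\setminus r^{m+1}\Rhat$. Finally, if $\gamma\in\bigcap_{m\ge 1}r^m\Rhat$, then Lemma~\ref{charideal} gives $\gamma(m)=0$ for every $m$, so $\gamma=0$.

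The main point requiring care is the non-zero divisor step in (1), where one must lift the cancellation property from $R$ to $\Rhat$ coordinate by coordinate; this is where the standing reduction $\bigcap_m(r^m)=0$ established earlier in the subsection (via the quotient by $J$) is implicitly being used, since otherwise one could not conclude $\gamma=0$ from the vanishing of all its coordinates. Every other step is bookkeeping directly off Lemma~\ref{charideal}.
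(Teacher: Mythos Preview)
Your proof is correct and follows essentially the same route as the paper's: coordinate-wise centrality, Lemma~\ref{charideal} for the ideal structure and intersection, the first coordinate for non-unit, and a one-step shift (from level $m+1$ down to level $m$) for non-zero divisor. Your direct argument for the non-zero divisor is the contrapositive of the paper's contradiction argument, and you are more explicit than the paper about strictness of the chain.

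One small correction to your closing commentary: the reduction $\bigcap_m (r^m)=0$ in $R$ is \emph{not} what lets you conclude $\gamma=0$ from the vanishing of all coordinates---that is simply the definition of the inverse limit $\Rhat$, and holds regardless. The reduction is used elsewhere in the subsection (to embed $R$ faithfully into $\Rhat$), but your non-zero divisor step does not rely on it.
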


\begin{proof} First, we check that $r$ is central in $\Rhat$. Given $\delta \in \Rhat$, we have that for every $m$, $(r \delta)(m) = (r)(m) \delta(m) = \delta(m) r(m) = (\delta r)(m)$, using that $r(m) = r + (r^m)$ is central in $R/(r^m)$.

From this it follows that each $r^m$ is central in $\Rhat$, and hence each $r^m \Rhat$ is a two-sided ideal of $\Rhat$. By Lemma~\ref{charideal} and definition of $\Rhat$, we see that $\bigcap_{m \geq 1} r^m \Rhat = 0$. So item (2) has been established, and it remains to check that $r$ is neither a zero divisor nor a unit.

$r$ cannot be a unit, because otherwise we would have $1 \in r \Rhat$, but this contradicts Lemma~\ref{charideal}, since $1(1) = 1 + (r) \not= (r)$.

Finally, suppose $\delta \in \hat{R}$ satisfies $r \delta = 0$, and suppose towards a contradiction $\delta \not= 0$. Then we can find some $m$ with $\delta(m) \not= (r^m)$. Choose $s_m \in \delta(m)$ and $s_{m+1} \in \delta(m+1)$. Then $s_{m+1} - s_m \in (r^m)$ and $s_m \not \in (r^m)$, so $s_{m+1} \not \in (r^m)$. Hence $r s_{m+1} \not \in (r^{m+1})$. Hence $(r \delta)(m+1) \not= (r^{m+1})$, contradicting $r \delta = 0$.
\qed

\end{proof}

We identify desirable subsets of $R$ and $\Rhat$, respectively.
For each finite $s\subseteq \omega$, let 
$$\eps_s=\sum_{i\in s} r^i$$ and let
$S=\{\eps_s:s\subseteq\omega$ finite$\}$.  
Let $\Gamma=\varprojlim S$, the inverse limit of $S$ with the natural maps.  That is, 
$$\Gamma=\{\gamma\in\Rhat: \gamma(m)\cap S\neq\emptyset\ \hbox{for all $m\ge 1$}\}$$


A more explicit way of viewing $\Gamma$ is as follows: for each $s \subseteq \omega$, let $\sigma_s \in \Rhat$ be defined via $\sigma_s(m) = \epsilon_{s \cap m} + (r^m)$. Then $\{\sigma_s: s \subseteq \omega\}$ is a listing of $\Gamma$ without repetition.

 \begin{Lemma} \label{oneplus} Every non-zero $\gamma\in \Gamma$ is a central non-zero divisor.
 \end{Lemma}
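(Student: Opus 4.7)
The plan is to dispatch centrality via a straightforward componentwise argument, then reduce the non-zero-divisor claim by factoring $\sigma_s = r^{i_0}\sigma_t$ and inverting $\sigma_t$ through a geometric series in $\Rhat$. For centrality itself, since $r$ is central in $R$, every $\epsilon_s \in R$ is central (it is a sum of powers of $r$), so each coordinate $\gamma(m) = \epsilon_{s \cap m} + (r^m)$ of any $\gamma = \sigma_s \in \Gamma$ is central in $R/(r^m)$; componentwise centrality in the inverse limit immediately yields that $\gamma$ commutes with every $\delta \in \Rhat$.

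For the non-zero-divisor claim, I would fix a nonzero $\sigma_s \in \Gamma$, set $i_0 = \min s$, and put $t = \{j - i_0 : j \in s\}$, so $0 \in t$ and $\sigma_t \in \Gamma$. A short coordinate-wise computation should give $\sigma_s = r^{i_0}\sigma_t$ in $\Rhat$; the only subtle point is a handful of ``overflow'' terms $r^j$ with $m \leq j < m + i_0$ appearing on the right-hand side of the calculation, but these lie in $(r^m)$ and so are killed modulo $(r^m)$.

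The main step, and what I expect to be the chief obstacle, is to show that $\sigma_t$ is actually a unit in $\Rhat$. Because $0 \in t$, the identity $\epsilon_{t \cap m} = 1 + r \cdot (\text{finite sum in } R)$ together with Lemma~\ref{charideal} applied at $m=1$ gives $\sigma_t - 1 \in r\Rhat$, so $\sigma_t = 1 + r\tau$ for some $\tau \in \Rhat$. I then form the formal inverse $u = \sum_{n \geq 0}(-r\tau)^n$: since $r$ is central, $(-r\tau)^n \in r^n \Rhat$, whose $m$-th coordinate vanishes once $n \geq m$ by Lemma~\ref{charideal}, so the partial sums stabilize coordinatewise and define a genuine element $u \in \Rhat$. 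A telescoping identity in $R/(r^m)$, namely $(1+r\tau)(m)\cdot\sum_{k<m}(-r\tau)^k(m) = 1 - (-r\tau)^m(m) = 1$, then yields $\sigma_t u = u \sigma_t = 1$. With $\sigma_t$ a unit and $r^{i_0}$ a non-zero divisor in $\Rhat$ (inherited from the already-established fact that $r$ is not a zero divisor in $\Rhat$), $\sigma_s = r^{i_0}\sigma_t$ is itself a non-zero divisor.
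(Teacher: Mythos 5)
Your proposal is correct, and it factors $\gamma$ the same way the paper does — every nonzero $\gamma \in \Gamma$ is written as $r^n(1 + r\delta)$ — but from there the arguments diverge. The paper proves directly, by an induction on $m$, that $\mu(1 + r\delta) \in r^m\Rhat$ forces $\mu \in r^m\Rhat$, and then uses $\bigcap_m r^m\Rhat = 0$ to conclude $1 + r\delta$ is not a zero divisor. You instead prove the strictly stronger statement that $1 + r\tau$ is a \emph{unit} of $\Rhat$, by observing that the geometric series $\sum_{n\geq 0}(-r\tau)^n$ converges coordinatewise (each $(-r\tau)^n \in r^n\Rhat$ has trivial $m$-th coordinate once $n \geq m$, via Lemma~\ref{charideal} and coherence) and that the usual telescoping identity holds in each $R/(r^m)$. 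Your version is the conceptually cleaner one — it recognizes $\Rhat$ as $r$-adically complete and invokes the standard fact that $1 + (\text{topologically nilpotent})$ is invertible — and it yields extra information (a genuine inverse, not just cancellation). The paper's induction buys slightly more self-containment: it never needs to form an infinite series or argue about convergence, only about membership in the filtration. Both are valid; your factorization details (the bookkeeping with $i_0 = \min s$, $t = s - i_0$, and the observation that the overflow terms $r^j$ with $m \leq j < m + i_0$ vanish mod $(r^m)$) and your verification that $\sigma_t - 1 \in r\Rhat$ via $\sigma_t(1) = 1 + (r)$ are all correct.
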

 
 \begin{proof} That every $\gamma\in\Gamma$ is central is clear.
 As every non-zero $\gamma\in\Gamma$ is equal to $r^n(1+r\delta)$ for some $n\ge 0$ and $\delta\in \Rhat$, 
 it suffices to show that for every $\mu\in \Rhat$, if $\mu(1+r\delta)=0$, then $\mu=0$.  We will prove, by induction on $m\ge 1$, 
 that 
 
 \begin{quotation}  \noindent For any $\mu,\delta\in\Rhat$, if $\mu(1+r\delta)\in r^m\Rhat$, then $\mu\in r^m\Rhat$.
 \end{quotation}
 
 
  For $m=1$, assume $\mu(1+r\delta)\in r\Rhat$.  Then $\mu+\mu r \delta\in r\Rhat$, hence $\mu\in r\Rhat$.
 Assume the above holds for $m$ and suppose $\mu(1+r\delta)\in r^{m+1}\Rhat$. By the base case we have that $\mu \in r \hat{R}$. Choose $\nu$ such that $\mu=r\nu$.
 Plugging back in, $r\nu(1+r\delta)\in r^{m+1}\Rhat$, so $\nu(1+r\delta)\in r^m\Rhat$.
 Thus, $\nu\in r^m \Rhat$ by our inductive hypothesis, so $\mu=r\nu\in r^{m+1}\Rhat$, as required.  \qed
 \end{proof}
 
 There is a natural topology on $\Gamma$ formed by positing that $\{\O_{m, s}:s\subseteq m < \omega\}$ is a basis, where
 $$\O_{m, s}:=\{\gamma\in\Gamma:\gamma(m)= \epsilon_s\}$$
 for every $s \subseteq m < \omega$.  With this topology, $\Gamma$ is Polish; indeed, it is homeomorphic to Cantor space. Writing $\Gamma = \{\sigma_s: s \subseteq \omega\}$ as above, the homeomorphism in question is obtained by sending $\sigma_s$ to $s$.
 
 For each $n\ge 1$ we consider the polynomial ring $\Rhat[x_i:i<n]$, where the indeterminants $x_i$ commute with each other and with the coefficients.
 In what follows, we will only evaluate polynomials $p(\xbar)\in R[\xbar]$ on tuples $\abar$ that are central, so there is no ambiguity.
 
 
 \begin{Definition}  {\em  Suppose $R'$ is a subring of $\Rhat$. A subset $\Gamma_0\subseteq \Gamma$ is {\em $R'$-algebraically independent} if, for all $n\ge 1$, for all $p(x_1,\dots,x_n)\in R'[x_1,\dots,x_n]$,
 and for all distinct $\{\gamma_1,\dots,\gamma_{n}\}\subseteq \Gamma_0$, if $p(\gamma_1,\dots,\gamma_n)=0$, then $p=0$ (i.e., every coefficient of $p$ is zero).
 
  For polynomials $p(x)\in \Rhat[x]$ in one variable, 
 we say {\em $p$ is $0$ on $\O_{m,s}$} if $p(\gamma)=0$ for every $\gamma\in\O_{m,s}$, and we say {\em $p=0$ on a ball around 0} if $p$ is 0 on some $\O_{m,\emptyset}$.
 } 
 \end{Definition}
 
 
 The next few lemmas tell us that countably infinite $R$-algebraically independent sets exist.


 \begin{Lemma}  \label{ball}    Suppose $p(x)\in \Rhat[x]$ is 0 on a ball around 0.  Then $p$ is the zero polynomial, i.e., every coefficient of $p$ is zero.
 \end{Lemma}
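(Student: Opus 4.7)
The plan is to induct on $n$, where $p(x)=\sum_{i=0}^n a_i x^i\in\Rhat[x]$. The key observation is that the ball $\O_{m,\emptyset}$ contains every element $r^k$ for $k\ge m$, viewed in $R\subseteq\Rhat$: indeed $r^k(m)=r^k+(r^m)=(r^m)=\eps_\emptyset+(r^m)$. So it suffices to prove the apparently stronger statement that if $p(r^k)=0$ for all sufficiently large $k$, then every coefficient of $p$ is zero.

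The base case $n=0$ is immediate, as $p$ is the constant $a_0$. For the inductive step, I first extract the constant coefficient by projecting into the $N$-th factor of the inverse limit $\Rhat=\varprojlim R/(r^m)$. Taking $k\ge\max(m,N)$, every term $r^{ik}$ with $i\ge 1$ sits in $(r^N)$, so $(p(r^k))(N)=a_0(N)$. As $p(r^k)=0$, this forces $a_0(N)=0$ for every $N$, and hence $a_0=0$.

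I then write $p(x)=x\,q(x)$ with $q\in\Rhat[x]$ having one fewer term. For every $k\ge m$, $r^k\,q(r^k)=0$; since $r$ is a non-zero divisor in $\Rhat$, so is each $r^k$, and therefore $q(r^k)=0$ for all $k\ge m$. Applying the inductive hypothesis to $q$ gives $q=0$, whence $p=0$.

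I do not anticipate any real obstacle here: Lemma~\ref{charideal}'s description of $r^N\Rhat$ and the already-established non-zero-divisor property of $r$ in $\Rhat$ are exactly what is needed. The one small subtlety is choosing the inductive hypothesis so that it survives factoring a root at $0$: phrasing vanishing on the discrete sequence $\{r^k:k\ge m\}$ rather than on the whole ball is cleaner, because it removes the need to separately verify $q(0)=0$ after cancelling the factor of $x$.
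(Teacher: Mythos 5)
Your proof is correct and follows the same induction-on-degree skeleton as the paper's: extract that the constant coefficient vanishes, factor $p(x) = x\,q(x)$, cancel a non-zero-divisor factor, and apply the inductive hypothesis to $q$. The genuine difference is your choice to phrase the inductive statement over the discrete sequence $\{r^k : k \ge m\} \subseteq \O_{m,\emptyset}$ rather than over the whole ball. The paper's version shows $q(a) = 0$ for every nonzero $a \in \O_{m,\emptyset}$ by invoking Lemma~\ref{oneplus} (every nonzero $\gamma \in \Gamma$ is a non-zero divisor in $\Rhat$) and then handles $q(0)$ separately, with exactly the kind of projection-to-$R/(r^N)$ argument you use to get $a_0 = 0$. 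Your version needs only that $r^k$ is a non-zero divisor in $\Rhat$ (already established), so it bypasses Lemma~\ref{oneplus} entirely, and---as you anticipated---it folds the separate $q(0)=0$ verification into the strengthened inductive hypothesis. This is a modest but real streamlining of the same argument.
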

 
 
 \begin{proof}  We argue by induction on the degree of $p$.  For $p$ constant this is clear.  Suppose towards a contradiction $p(x)$ has degree bigger than $0$, and $p(x) \equiv 0$ on $\O_{m,\emptyset}$ and we have checked all polynomials of smaller degree. Since $p(0) = 0$, the constant coefficient of $p(x)$ must be $0$, so we can write $p(x) = x q(x)$ where $q(x)$ is a polynomial of smaller degree. For the contradiction, it suffices to show $q(x) = 0$; by the inductive hypothesis, it suffices to show $q(x) \equiv 0$ on $\O_{m,\emptyset}$.
 
 We first check that $q(a)=0$ for all  nonzero $a \in \O_{m,\emptyset}$.  For any such $a$, we know $p(a) = 0$, i.e. $a q(a) = 0$.  We conclude $q(a) = 0$ since $a$ is not a zero divisor. 
 So to finish, we just need to show that $q(0) = 0$. But note that for every $n > m$, $q(r^n) = 0$. Since $q(r^n) - q(0) \in r^n \Rhat$ (by factoring), we get that $q(0) \in r^n \Rhat$. Since this holds for sufficiently large $n$, we get $q(0) = 0$ as desired. \qed
\end{proof}

\begin{Lemma}  \label{Baire}  Let $R'\subseteq \Rhat$ be any countable subring.  There is some $\gamma\in \Gamma$ such that $\{\gamma\}$ is $R'$-algebraically independent.
\end{Lemma}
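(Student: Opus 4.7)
The plan is to apply the Baire category theorem to $\Gamma$. Recall from the paragraph after the definition of $\Gamma$ that $\Gamma$ is homeomorphic to Cantor space under the map $\sigma_s \mapsto s$, so in particular $\Gamma$ is a nonempty Polish space. For each nonzero polynomial $p(x) \in R'[x]$, I would consider the zero set $Z_p := \{\gamma \in \Gamma : p(\gamma) = 0\}$, and my goal is to show that $\bigcup_{p \ne 0} Z_p \ne \Gamma$; since $R'$ is countable, there are only countably many such $p$, so it suffices to show each $Z_p$ is nowhere dense.

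First, each $Z_p$ is closed in $\Gamma$: polynomial evaluation $\gamma \mapsto p(\gamma)$ is continuous from $\Gamma$ into $\Rhat$ (the ring operations of $\Rhat$ are continuous with respect to its inverse-limit topology), and $\{0\}$ is closed in $\Rhat$.

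The main step is to show $Z_p$ has empty interior, and this is where I expect the only real work. The key observation is that every basic open set $\O_{m,s}$ is a translate of $\O_{m,\emptyset}$ by an element of $\Gamma$: using the description of $\Gamma$ as $\{\sigma_s : s \subseteq \omega\}$ and the fact that $\sigma_a + \sigma_b = \sigma_{a \cup b}$ when $a \cap b = \emptyset$, one checks directly that $\O_{m,s} = \sigma_s + \O_{m,\emptyset}$ for $s \subseteq m$. So suppose toward contradiction $\O_{m,s} \subseteq Z_p$. Define $q(y) := p(y + \sigma_s) \in \Rhat[y]$. Then for every $\gamma' \in \O_{m,\emptyset}$ we have $\sigma_s + \gamma' \in \O_{m,s} \subseteq Z_p$, so $q(\gamma') = 0$; that is, $q$ is $0$ on a ball around $0$. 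By Lemma~\ref{ball}, $q$ is the zero polynomial in $\Rhat[y]$. Since the substitution $x \mapsto y + \sigma_s$ is a ring automorphism of $\Rhat[x]$ (and in particular degree-preserving), $p$ must also be the zero polynomial, contradicting $p \ne 0$.

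Having established that each $Z_p$ is closed nowhere dense, the Baire category theorem gives some $\gamma \in \Gamma \setminus \bigcup_{p \ne 0} Z_p$, and such a $\gamma$ is by definition $R'$-algebraically independent as a singleton. The only subtlety worth flagging is the verification that $\O_{m,s} = \sigma_s + \O_{m,\emptyset}$, since Lemma~\ref{ball} is stated only for balls around $0$; everything else is routine.
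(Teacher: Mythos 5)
Your proof is correct and follows essentially the same route as the paper: show each zero set $Z_p$ is closed and nowhere dense (the latter by translating $\O_{m,s}$ to the ball $\O_{m,\emptyset}$ and invoking Lemma~\ref{ball}), then apply Baire category. In fact you are a touch more careful than the paper's own proof at the translation step: you verify that $\O_{m,s} = \sigma_s + \O_{m,\emptyset}$ inside $\Gamma$ and correctly set $q(y) := p(y + \sigma_s)$, whereas the paper writes $q(x) := p(x - \epsilon_s)$, which has a sign slip ($-\epsilon_s$ need not equal $\epsilon_s$ modulo $(r^m)$, and $\gamma' - \epsilon_s$ need not lie in $\Gamma$).
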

\begin{proof}  We first show that the zero set $Z(p) := \{\gamma \in \Gamma: p(\gamma) =0\}$ is closed and nowhere dense for every non-zero $p\in R'[x]$.  It is easily checked that $Z(p)$ is closed for every $p\in \Rhat[x]$.
To see that $Z(p)$ is nowhere dense, by way of contradiction assume that some $\O_{m,s}\subseteq Z(p)$.
Let $q(x):=p(x-\epsilon_s)$. Since $\O_{m,s}\subseteq Z(p)$ we have
$\O_{m,\emptyset}\subseteq Z(q)$.  So, by Lemma~\ref{ball}(2), $q=0$.  Since the mapping $r(x)\mapsto r(x-\epsilon_s)$ is a ring automorphism of $\Rhat[x]$, this implies $p=0$, contradiction.

Now $\Gamma$ is Polish and $\{Z(p):p\in R'[x]\setminus\{0\}\}$ is a countable set of nowhere dense, closed sets.  By Baire Category, there is $\gamma\in\Gamma$ such that
$p(\gamma)\neq 0$ for every non-zero $p\in R'[x]$.  \qed
\end{proof}

\begin{Lemma}  \label{existindep}  There is an infinite  $R$-algebraically independent $\Gamma_0 \subseteq \Gamma$.
\end{Lemma}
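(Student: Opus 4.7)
The plan is to build $\Gamma_0 = \{\gamma_n : n \in \omega\}$ recursively, applying Lemma~\ref{Baire} at each stage to an ascending chain of countable subrings of $\Rhat$. At stage $n$, I let $R_n$ denote the subring of $\Rhat$ generated by $R \cup \{\gamma_0, \ldots, \gamma_{n-1}\}$; since $R$ is countable and each $\gamma_i$ is central in $\Rhat$ by Lemma~\ref{oneplus}, $R_n$ is a countable subring of $\Rhat$. Lemma~\ref{Baire} then furnishes $\gamma_n \in \Gamma$ such that $\{\gamma_n\}$ is $R_n$-algebraically independent. Distinctness comes for free: if $\gamma_n = \gamma_i$ for some $i<n$, then the nonzero polynomial $x - \gamma_i \in R_n[x]$ would vanish at $\gamma_n$, contradicting $R_n$-algebraic independence of $\{\gamma_n\}$.

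The substantive step is verifying that $\Gamma_0$ is genuinely $R$-algebraically independent. I would prove, by induction on $n \ge 1$, that whenever $p(x_1,\ldots,x_n) \in R[x_1,\ldots,x_n]$ and $\gamma_{i_1}, \ldots, \gamma_{i_n}$ are distinct elements of $\Gamma_0$ (ordered so $i_1 < \cdots < i_n$) with $p(\gamma_{i_1},\ldots,\gamma_{i_n}) = 0$, one must have $p = 0$. The base case $n=1$ is immediate from the $R_{i_1}$-independence of $\gamma_{i_1}$, since $R \subseteq R_{i_1}$. For the inductive step, expand $p = \sum_k q_k(x_1,\ldots,x_{n-1}) x_n^k$ with $q_k \in R[x_1,\ldots,x_{n-1}]$. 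After substitution, each coefficient $q_k(\gamma_{i_1},\ldots,\gamma_{i_{n-1}})$ lies in $R_{i_n}$, because $R$ and $\gamma_{i_1}, \ldots, \gamma_{i_{n-1}}$ are all contained in $R_{i_n}$. Thus $R_{i_n}$-algebraic independence of $\gamma_{i_n}$ forces each $q_k(\gamma_{i_1},\ldots,\gamma_{i_{n-1}}) = 0$, and the inductive hypothesis then delivers $q_k = 0$ for every $k$, whence $p = 0$.

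The only subtlety I anticipate is bookkeeping around non-commutativity: the polynomial ring $R[x_1,\ldots,x_n]$ is set up with indeterminates that commute with one another and with the coefficients, so substitution is only unambiguous at tuples of central elements. Since each $\gamma_i \in \Gamma$ is central in $\Rhat$ by Lemma~\ref{oneplus}, the elements we successively adjoin to build the $R_n$ remain central and the evaluation homomorphisms $R_n[x_1,\ldots,x_k] \to \Rhat$ are well-defined; no further obstacle arises.
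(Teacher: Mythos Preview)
Your proposal is correct and follows essentially the same approach as the paper: recursively extend a finite $R$-algebraically independent set by applying Lemma~\ref{Baire} to the countable subring $R'$ generated by $R$ together with the elements chosen so far. The paper simply asserts that ``it is easily checked'' that the enlarged set is $R$-algebraically independent, whereas you spell out the induction in detail (and also make explicit the distinctness and the centrality bookkeeping), so your write-up is a faithful elaboration of the paper's sketch.
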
  

\begin{proof}  It suffices to show that any finite $R$-algebraically independent set $\{\gamma_i:i<n\}\subseteq \Gamma$ can be extended.  
Let $R'$ be the (countable) subring of $\Rhat$ generated by $R\cup\{\gamma_i:i<n\}$.
By Lemma~\ref{Baire}, choose $\gamma_n\in\Gamma$ such that $\{\gamma_n\}$ is $R'$-algebraically independent.
It is easily checked that $\{\gamma_i:i\le n\}$ is $R$-algebraically independent.  \qed
\end{proof}

 \medskip
 With our eye on encoding free-like tagged $R$-modules, we consider direct sums $\bigoplus_\omega R$ and $\bigoplus_\omega \Rhat$, respectively.
 Note that our identification of $R$ in $\Rhat$ extends, hence we consider $\bigoplus_\omega R$ to be an $R$-submodule of $\bigoplus_\omega \Rhat$.
As well,  the sequence $\<r^m(\bigoplus_\omega \Rhat)\rangle_{m < \omega} $ is a strictly descending sequence of submodules with zero intersection.
 For what follows, we require a weak form of purity.

 \begin{Definition}  {\em  A left $R$-submodule $M$ of $N$ is {\em $r$-pure} if, for every $a\in M$, if the equation $rx=a$ has a solution in $N$, then it has one in $M$.
 }
 \end{Definition}
 
 As $r$ is not a zero divisor in $\Rhat$, if $\aa\in \bigoplus_\omega \Rhat$
 and $rx=\aa$ has a solution in
 $\bigoplus_\omega\Rhat$, then the solution is unique.  Thus we get a partial inverse function $r^{-1}:r(\bigoplus_\omega \Rhat)\rightarrow \bigoplus_\omega \Rhat$,
 and an $R$-submodule  $G$ is $r$-pure if and only if it is closed under $r^{-1}$.  In particular, given any subset $X\subseteq \bigoplus_\omega \Rhat$, there is a unique smallest $r$-pure
 $R$-submodule containing $X$.
 
 Note that $\bigoplus_\omega R$ is $r$-pure in $\bigoplus_\omega \Rhat$, since $R$ is $r$-pure in $\Rhat$ (since $r R = (r) = r \Rhat \cap R$).
 

\begin{Lemma} \label{wd}   Suppose $G\subseteq \bigoplus_\omega \Rhat$ is an $r$-pure $R$-submodule,
$(\delta_i:i<n)$ are from $\Rhat$, and $(\aa_i:i<n)$ are from $G$.  For each $i<n$ and $m\ge 1$, choose $s_{i,m}\in \delta_i(m)$ (so $s_{i,m}\in R$).
Then $\sum_{i<n} \delta_i \aa_i=0$ in $\bigoplus_\omega \Rhat$ if and only if for every $m\ge 1$, $\sum_{i<n} s_{i,m}\aa_i\in r^m G$.
\end{Lemma}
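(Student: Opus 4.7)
The plan is to reduce both directions to the single identity $\sum_i \delta_i \aa_i \equiv \sum_i s_{i,m}\aa_i \pmod{r^m \bigoplus_\omega \Rhat}$, which is immediate from the fact that $\delta_i - s_{i,m} \in r^m \Rhat$ for every $i$ and $m$ (by definition of $s_{i,m}$ and the identification of $R/(r^m)$-cosets). Together with the earlier facts that $r$ is a non--zero divisor in $\Rhat$ and that $\bigcap_{m\ge 1} r^m \Rhat = 0$, both statements propagate coordinatewise to $\bigoplus_\omega \Rhat$.

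For the reverse direction, assume $\sum_i s_{i,m}\aa_i \in r^m G$ for every $m\ge 1$. Since $r^m G \subseteq r^m \bigoplus_\omega \Rhat$, the displayed congruence gives $\sum_i \delta_i \aa_i \in r^m \bigoplus_\omega \Rhat$ for every $m$, and then $\bigcap_m r^m \bigoplus_\omega \Rhat = 0$ forces $\sum_i \delta_i \aa_i = 0$.

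For the forward direction, assume $\sum_i \delta_i \aa_i = 0$. The congruence then yields $\sum_i s_{i,m}\aa_i \in G \cap r^m\bigoplus_\omega \Rhat$. The content of the lemma is the following auxiliary claim, which I would isolate and prove first: \emph{If $a\in G$ and $a \in r^m \bigoplus_\omega \Rhat$, then $a \in r^m G$.} I would prove this by induction on $m$. The base case $m=1$ is exactly the definition of $r$-purity. For the inductive step, if $a = r^{m+1}d$ for some $d\in\bigoplus_\omega \Rhat$ and $a = r^m c$ with $c\in G$ (by induction), then $r^m c = r^m(rd)$, and since $r^m$ is not a zero divisor in $\Rhat$ we get $c = rd$; applying $r$-purity to $c\in G$ produces $c' \in G$ with $c = rc'$, and then $a = r^{m+1}c' \in r^{m+1}G$.

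The main obstacle is precisely this bootstrap: $r$-purity is only a statement about solvability of $rx=a$, while the conclusion we need concerns divisibility by $r^m$. The point is that the non--zero divisor property of $r$ in $\Rhat$ lets us identify the unique lift in $G$ with the unique lift in the ambient module at each stage, so the successive applications of $r$-purity do not drift. Once this claim is in hand, both halves of the lemma reduce to a single calculation, and no further subtlety is required.
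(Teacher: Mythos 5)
Your proof is correct and follows essentially the same route as the paper: reduce both directions to the congruence $\sum_i\delta_i\aa_i\equiv\sum_i s_{i,m}\aa_i \pmod{r^m\bigoplus_\omega\Rhat}$, then use $\bigcap_m r^m\bigoplus_\omega\Rhat=0$ for one direction and $r$-purity for the other. The auxiliary claim you isolate ($a\in G\cap r^m\bigoplus_\omega\Rhat$ implies $a\in r^mG$) is exactly what the paper compresses into the single sentence ``By $r$-purity, this is in $r^mG$''; the paper's justification is its earlier observation that, since $r$ is a non-zero-divisor in $\Rhat$, $r$-purity of $G$ is equivalent to $G$ being closed under the partial map $r^{-1}$, which amounts to the same induction you carry out.
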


\begin{proof}  Fix $m\ge 1$.  Note that 
$$\left(\sum_{i<n} \delta_i\aa_i\right)-\left(\sum_{i<n} s_{i,m}\aa_i\right)\in r^m\bigoplus_\omega \Rhat$$
Thus, if $\sum_{i<n} \delta_i \aa_i=0$ in $\bigoplus_\omega \Rhat$, then it follows that $\sum_{i<n} s_{i,m}\aa_i\in r^m \bigoplus_\omega \Rhat$.
By $r$-purity, this is in $r^mG$.
Conversely, if $\sum_{i<n} s_{i,m}\aa_i\in r^m G$, then from above, $\sum_{i<n} \delta_i \aa_i\in r^m\bigoplus_\omega\Rhat$.  As this holds for all $m\ge 1$,
$\sum_{i<n} \delta_i \aa_i=0$.  \qed
\end{proof}

We will only use the following lemma in the case where $G$ and $H$ both contain $\bigoplus_\omega R$; in that case, $\Rhat G = \Rhat H = \bigoplus_\omega \Rhat$.
\begin{Lemma} \label{Rhatextend} Suppose $G,H\subseteq \bigoplus_\omega \Rhat$ are both $r$-pure $R$-submodules and $f:G\rightarrow H$ is an
$R$-module isomorphism.  Then there is an $\Rhat$-isomorphism $f^*: \Rhat G \to \Rhat H$ extending $f$.
\end{Lemma}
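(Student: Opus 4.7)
The plan is to define $f^*: \Rhat G \to \Rhat H$ by the obvious formula
\[ f^*\!\left(\sum_{i<n} \delta_i \aa_i\right) := \sum_{i<n} \delta_i\, f(\aa_i), \]
for $\delta_i \in \Rhat$ and $\aa_i \in G$, and then verify that $f^*$ is a well-defined $\Rhat$-module isomorphism. Once well-definedness is established, $\Rhat$-linearity is automatic from the formula, and the image is $\Rhat H$ since it contains $H = f(G)$ and is an $\Rhat$-submodule of $\bigoplus_\omega \Rhat$. Bijectivity follows by defining $g^*: \Rhat H \to \Rhat G$ symmetrically using $f^{-1}$ (which is again an isomorphism between $r$-pure $R$-submodules), showing $g^*$ is well-defined by the same argument, and checking $g^* \circ f^* = \mathrm{id}_{\Rhat G}$ and $f^* \circ g^* = \mathrm{id}_{\Rhat H}$ on generators $\delta\aa$.

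The main obstacle, and essentially the only non-trivial step, is well-definedness, which is exactly where Lemma~\ref{wd} enters. Suppose $\sum_{i<n} \delta_i \aa_i = 0$ in $\bigoplus_\omega \Rhat$ with each $\aa_i \in G$. For each $m \geq 1$ pick $s_{i,m} \in \delta_i(m) \subseteq R$. By Lemma~\ref{wd} applied to $G$, the vanishing $\sum_{i<n} \delta_i \aa_i = 0$ is equivalent to the condition that $\sum_{i<n} s_{i,m}\aa_i \in r^m G$ for every $m \geq 1$. Using $r$-purity of $G$, write $\sum_{i<n} s_{i,m}\aa_i = r^m \bb_m$ with $\bb_m \in G$. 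Since $f$ is an $R$-module homomorphism, applying $f$ gives $\sum_{i<n} s_{i,m} f(\aa_i) = r^m f(\bb_m) \in r^m H$ for every $m$. Now invoke Lemma~\ref{wd} in the other direction, applied to the $r$-pure submodule $H$, to conclude $\sum_{i<n} \delta_i f(\aa_i) = 0$ in $\bigoplus_\omega \Rhat$. This is exactly what well-definedness demands.

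With well-definedness in hand, $\Rhat$-linearity of $f^*$ reduces to the computation $f^*(\delta \cdot \sum \delta_i \aa_i) = f^*(\sum \delta\delta_i \aa_i) = \sum \delta\delta_i f(\aa_i) = \delta \cdot f^*(\sum \delta_i \aa_i)$; additivity is similar. Finally, $f^*$ extends $f$ since for $\aa \in G$ we have $f^*(1 \cdot \aa) = 1 \cdot f(\aa) = f(\aa)$, so the construction yields the desired $\Rhat$-isomorphism.
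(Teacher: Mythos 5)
Your proof is correct and takes essentially the same route as the paper: define $f^*$ by the obvious formula, invoke Lemma~\ref{wd} (in both directions, for $G$ and then for $H$) to establish well-definedness, and note that $\Rhat$-linearity, surjectivity, and injectivity are then immediate. The paper compresses all of this into one sentence, so your write-up is simply a faithful unpacking of it. One minor note: where you invoke $r$-purity to write $\sum_{i<n} s_{i,m}\aa_i = r^m\bb_m$ with $\bb_m\in G$, no extra appeal to $r$-purity is needed — $r^mG$ means $\{r^m g : g\in G\}$ by definition, and the $r$-purity has already done its work inside Lemma~\ref{wd}.
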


\begin{proof}  Define $f^*$ by $f^*(\sum_{i<n}\delta_i\aa_i)=\sum_{i<n}\delta_if(\aa_i)$.  This is well-defined by Lemma~\ref{wd} and is visibly an $\Rhat$-isomorphism.  \qed
\end{proof}

\medskip

We are finally ready to prove Theorem~\ref{A}.

\smallskip

\begin{proof}    By Proposition~\ref{freelikeBC}, it suffices to 
 describe a Borel reduction from the class of free-like, tagged left $R$-modules to the class of left $R$-modules.
For this, by Lemma~\ref{existindep}, fix a countably infinite,  $R$-algebraically independent $\{\gamma_i:i\in\omega\}\subseteq\Gamma$.
To ease notation, we take as inputs $\Mbar=(\bigoplus_\omega R,M_n)_{n\in\omega}$ with the additional property that $M_0=\oplus_\omega R$.
To each such $\Mbar$:
\begin{quotation}
\noindent  Let $G(\Mbar)$ be the smallest $r$-pure $R$-submodule of $\bigoplus_\omega\Rhat$ containing $\bigoplus_\omega R\cup \bigcup_{n\in\omega}\gamma_n M_n$.
\end{quotation}

By Lemma~\ref{Rhatextend}, if  $f:\Mbar\cong \Mbar'$ as tagged $R$-modules, then the isomorphism lifts to an $R$-module isomorphism $f':G(\Mbar)\rightarrow G(\Mbar')$.  
For the converse, we will need the following lemma.

\begin{Lemma}\label{MkChar}
Suppose $\overline{M} = (\bigoplus_\omega R, M_n)_{n \in \omega}$ is a free-like tagged left $R$-module. Then for all $n < \omega$ and for all $\aa \in G(\Mbar)$, we have $\aa \in M_n$ if and only if $\gamma_n \aa \in G(\Mbar)$.
\end{Lemma}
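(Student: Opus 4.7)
The forward direction is immediate, since $\gamma_n M_n$ lies inside the generating set for $G(\Mbar)$, so $\aa \in M_n$ yields $\gamma_n \aa \in G(\Mbar)$. For the reverse direction, my plan is to first give a concrete description of $G(\Mbar)$. Let $H := \bigoplus_\omega R + \sum_n \gamma_n M_n$, and set
$$H' := \{\aa \in \bigoplus_\omega \Rhat : r^k \aa \in H \text{ for some } k \geq 0\}.$$
A routine verification shows that $H'$ is an $r$-pure $R$-submodule of $\bigoplus_\omega \Rhat$ containing $H$ and contained in any $r$-pure submodule containing $H$; hence $G(\Mbar) = H'$. So the hypotheses $\aa \in G(\Mbar)$ and $\gamma_n \aa \in G(\Mbar)$ give equations
$$r^l \aa = \cc + \sum_{j \in E} \gamma_j \mathbf{m}'_j, \qquad r^k \gamma_n \aa = \bb + \sum_{i \in F} \gamma_i \mathbf{m}_i$$
with $\bb, \cc \in \bigoplus_\omega R$, finite $E, F \subseteq \omega$, and $\mathbf{m}_i \in M_i$, $\mathbf{m}'_j \in M_j$.

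The central step is then to eliminate $\gamma_n \aa$: multiplying the first equation by $r^k \gamma_n$, the second by $r^l$, and subtracting yields the identity
$$r^k \gamma_n \cc + \sum_{j \in E} r^k \gamma_n \gamma_j \mathbf{m}'_j \;=\; r^l \bb + \sum_{i \in F} r^l \gamma_i \mathbf{m}_i$$
in $\bigoplus_\omega \Rhat$. Projecting onto each basis coordinate of $\bigoplus_\omega \Rhat$ produces a polynomial identity in the $\gamma_m$'s with $R$-coefficients evaluated in $\Rhat$; by the $R$-algebraic independence of $\{\gamma_m : m \in \omega\}$, every coefficient of every such polynomial must vanish in $R$. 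This monomial bookkeeping is the main technical burden: the relevant monomials are $1$, $\gamma_i$ for $i \in F$, $\gamma_n \gamma_j$ for $j \in E$ with $j \neq n$, and $\gamma_n^2$ if $n \in E$, and these are pairwise distinct. Since $r^k$ and $r^l$ are non-zero-divisors in $R$, matching coefficients forces $\bb = 0$, $\mathbf{m}'_j = 0$ for every $j \in E$, $\mathbf{m}_i = 0$ for every $i \in F \setminus \{n\}$, and either $\cc = 0$ (if $n \notin F$) or $r^k \cc = r^l \mathbf{m}_n$ (if $n \in F$).

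Substituting back, $r^l \aa = \cc$ in either case, so $r^{k+l} \aa = r^k \cc = r^l \mathbf{m}_n$; cancelling $r^l$ via the fact that $r$ is a non-zero-divisor in $\Rhat$, we obtain $r^k \aa = \mathbf{m}_n \in M_n$. Since $\bigoplus_\omega R$ is $r$-pure in $\bigoplus_\omega \Rhat$ (reflecting $rR = r\Rhat \cap R$, which follows from Lemma~\ref{charideal} applied to $R \subseteq \Rhat$), this forces $\aa \in \bigoplus_\omega R$. Finally, since $M_n$ is a direct summand of $\bigoplus_\omega R$ by free-likeness (Remark~\ref{freelikeImpliesPure}) and hence pure, $\aa \in M_n$ as required. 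The only point requiring care is the monomial analysis above; everything else is routine manipulation.
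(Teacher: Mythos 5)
Your proof is correct and follows essentially the same route as the paper's: express both $\aa$ and $\gamma_n\aa$ as $r^{-k}$ times an element of $\bigoplus_\omega R + \sum_m \gamma_m M_m$, eliminate $\gamma_n\aa$, invoke $R$-algebraic independence coordinate-by-coordinate to kill all but one monomial, and finish using $r$-purity of $\bigoplus_\omega R$ in $\bigoplus_\omega\Rhat$ together with purity of $M_n$ from Remark~\ref{freelikeImpliesPure}. You merely spell out the characterization $G(\Mbar) = H'$ explicitly, where the paper leaves it implicit in the notation $\aa = r^{-k}(\mathbf{b} + \sum_m \gamma_m \mathbf{c}_m)$.
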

\begin{proof}
Clearly if $\aa \in M_n$ then also $\gamma_n \aa \in G(\Mbar)$ (it is one of the generators). Conversely, suppose $\aa \in G(\Mbar)$ and also $\gamma_n \aa \in G(\Mbar)$. We can write $\aa = r^{-k}(\mathbf{b} + \sum_{m} \gamma_m \mathbf{c}_m)$ for some $k < \omega$, some $\mathbf{b} \in \bigoplus_\omega R$ and some $\mathbf{c}_m \in M_m$. Similarly write $\gamma_n \aa = r^{-k'}(\mathbf{b}' + \sum_m \gamma_m \mathbf{c}'_m)$. After mutiplying through by $r^{k+k'}$ we thus get

$$r^{k'} \left( \gamma_n \mathbf{b}  + \sum_m \gamma_m \gamma_n \mathbf{c}_m\right) = r^k \left( \mathbf{b}' + \sum_m \gamma_m \mathbf{c}'_m\right).$$ 

Applying algebraic independence at each coordinate, we get that each $\mathbf{c}_m = 0$, and that $\mathbf{b}' = 0$, and that $\mathbf{c}'_m  =0$ for all $m \not= n$, and that $r^k \mathbf{c}'_n = r^{k'} \mathbf{b}$, i.e. $\mathbf{b} = r^{-k'}(r^k \mathbf{c}'_n)$. Since $M_n$ is pure in $\bigoplus_\omega R$ (see Remark~\ref{freelikeImpliesPure}), which is $r$-pure in $\bigoplus_\omega \Rhat$, and since $r^k \mathbf{c}'_n \in M_n$, we get that $\mathbf{b} \in M_n$ also. Since $\aa = r^{-k} \mathbf{b}$, we are done.  \qed
\end{proof}

To finish, suppose $h: G(\Mbar) \cong G(\Mbar')$ as $R$-modules. Extend $h$ to an $\Rhat$-automorphism $\sigma$ by Lemma~\ref{Rhatextend}; then use the previous lemma to see that $\sigma$ carries each $M_n$ to $M'_n$. Since $M_0 = M_0' = \bigoplus_\omega R$, this implies that $\sigma$ restricts to a tagged $R$-module isomorphism from $\Mbar$ to $\Mbar'$.   \qed
\end{proof}

\subsection{Two orthogonal ideals}

The goal of this subsection is to prove the following result.

\begin{Theorem}  \label{B}  Suppose $R$ is a countable ring and $x,y\in R$ are central elements satisfying
\begin{enumerate}
\item  $(x)\cap (y)=0$; and
\item The two-sided ideal $I=\mbox{Ann}(x)+\mbox{Ann}(y)$ is proper, i.e., $1\not\in I$.
\end{enumerate}
Then the theory of left $R$-modules is Borel complete.
\end{Theorem}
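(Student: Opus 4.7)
The plan is to reduce Theorem~\ref{B} to Theorem~\ref{A}. Given $R$ with central $x,y$ satisfying the hypotheses, the goal is to construct a two-sided ideal $J\subseteq R$ such that in the quotient $\bar R := R/J$ the element $\bar r := \bar x + \bar y$ is a central non-zero-divisor non-unit. Once this is achieved, Theorem~\ref{A} applied to $\bar R$ gives Borel completeness of left $\bar R$-modules, and Lemma~\ref{R/I} transfers this to left $R$-modules.

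Three preliminary observations simplify the situation. First, $xy\in (x)\cap(y)=0$ forces $xy=0$, so $(x)\subseteq\mbox{Ann}(y)\subseteq I$ and $(y)\subseteq\mbox{Ann}(x)\subseteq I$; hence the principal ideal $(r)=(x+y)$ lies in the proper ideal $I$ and $r$ is already a non-unit of $R$. Second, $xy=0$ combined with centrality gives $r^n=x^n+y^n$, and orthogonality $(x^n)\cap(y^n)=0$ then yields $\mbox{Ann}(r^n)=\mbox{Ann}(x^n)\cap\mbox{Ann}(y^n)$ for every $n\geq 1$. Third, setting
\[
J := \bigcup_{n\geq 1}\mbox{Ann}(r^n) = \bigcup_{n\geq 1}\mbox{Ann}(x^n)\cap\mbox{Ann}(y^n),
\]
which is a two-sided ideal (an ascending union of such), the element $\bar r$ is immediately a non-zero-divisor in $\bar R$: if $rs\in J$ then $r^{n+1}s=0$ for some $n$, so $s\in J$. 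A parallel short computation using $xy=0$ shows that the orthogonality $(\bar x)\cap(\bar y)=0$ also survives in $\bar R$.

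The main step, and main obstacle, will be verifying that $\bar r$ remains a non-unit in $\bar R$. If $\bar r \bar s = \bar 1$ in $\bar R$, then $r^{n+1}s=r^n$ for some $n$, and orthogonality $(x)\cap(y)=0$ splits this into the simultaneous identities $x^{n+1}s = x^n$ and $y^{n+1}s = y^n$. The plan is to leverage these together with the properness of $I=\mbox{Ann}(x)+\mbox{Ann}(y)$ to reach a contradiction, by iterating the relations (using centrality and $xy=0$) to extract a decomposition $1=a+b$ with $ax=by=0$. The delicate point is that such $s$ would only immediately place $1$ in $\mbox{Ann}(x^n)+\mbox{Ann}(y^n)$, and sharpening this to $\mbox{Ann}(x)+\mbox{Ann}(y)$ requires a careful descent on the exponent $n$; a uniform proof may ultimately need either this descent argument made precise or a slightly different or enlarged choice of $J$. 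Once the non-unit condition is secured, Theorem~\ref{A} applied to $\bar R$ with central non-zero-divisor non-unit $\bar r$, followed by Lemma~\ref{R/I}, completes the proof.
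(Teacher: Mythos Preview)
Your strategy cannot succeed in general. Consider $R = k[X,Y]/(XY, X^2, Y^2)$ for a countable field $k$, with $x,y$ the images of $X,Y$. This ring satisfies the hypotheses of Theorem~\ref{B}: it is commutative, $(x)=kx$ and $(y)=ky$ have zero intersection, and $I=\mbox{Ann}(x)+\mbox{Ann}(y)=(x,y)$ is the maximal ideal, hence proper. But $r=x+y$ satisfies $r^2=0$, so your $J=\bigcup_n\mbox{Ann}(r^n)$ equals $R$ and $\bar R=0$. Worse, in this $R$ every element $a+bx+cy$ is a unit when $a\neq 0$ and nilpotent when $a=0$, so \emph{no} nonzero quotient of $R$ contains a central non-zero-divisor non-unit. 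The reduction to Theorem~\ref{A} via a quotient of $R$ is therefore impossible here, and the ``descent on the exponent'' you propose cannot be made to work: in this example $x^{n+1}s=x^n$ and $y^{n+1}s=y^n$ simply never occur for $n\ge 1$ (as $x^n=y^n=0$ forces nothing), while for $n=0$ they would force $1\in(x)+(y)$, which is false.

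The paper's proof proceeds quite differently. It sets $S=R/I$ and invokes Corollary~\ref{endo} (itself resting on Theorem~\ref{A} applied to the polynomial ring $S[t]$, not to a quotient of $R$) to know that left $S$-endomorphism structures are Borel complete. It then gives an explicit Borel reduction from $S$-endomorphism structures $(V,T)$ to left $R$-modules: map $(V,T)$ to $M(V,T)=(V\oplus\bigoplus_V R)/J$, where $J$ is generated by $\{x\ee_v-v,\ y\ee_v-T(v):v\in V\}$. The hypotheses $(x)\cap(y)=0$ and $1\notin I$ are used precisely to show that $V$ embeds in $M(V,T)$ as $xM(V,T)$ and that the graph of $T$ is first-order definable there, so that $(V,T)$ is recoverable from $M(V,T)$ up to isomorphism.
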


\begin{proof}
	
 Let $S:=R/I$.  In light of Lemma~\ref{endo} it suffices to construct a Borel embedding from the class of left $S$-endomorphism structures $(V,T)$ to the class of left $R$-modules.
Given $(V,T)$, where $V$ is a left $S$-module and $T:V\rightarrow V$ is an endomorphism, we construe $(V,T)$ as being $R$-modules and $R$-endomorphisms
in the usual manner.  That is, for each $r\in R$, define (left) multiplication by $r:V\rightarrow V$ by $r(v)=(r+I)(v)$ for each $v\in V$.  It is easily checked that $V$ is a left $R$-module
and $T$ is an $R$-endomorphism. Define the left $R$-module
$$W:=V\oplus\bigoplus_V R$$
and let $\{\ee_v:v\in V\}$ be a basis for $\bigoplus_V R$.  Note that $W$ does not depend on $T$.  
Let $J$ be the submodule generated by
$$\{x\ee_v - v:v\in V\}\cup\{y\ee_v-T(v):v\in V\}$$
and let  $M(V,T)$ be the $R$-module $W/J$.  The map $(V,T)\mapsto M(V,T)$ is Borel, and it is evident that isomorphic $S$-endomorphism structures get mapped to isomorphic $R$-modules,
but the reverse is more involved.  We will show that there are 0-definable copies of both $V$ and the graph of $T$ in  $M(V,T)$.

It is useful to define the $R$-endomorphism $\eps:\bigoplus_V R \rightarrow V$ as
$$\eps (\sum_{v\in V} r_v\ee_v)=\sum_{v\in V} r_v v$$  Note that a typical element $\cbar\in M(V,T)$ is of the form
$\cbar=v+\cc+J$ for some $v\in V$ and some  $\cc\in \bigoplus_V R$.
We record the following facts.

\begin{Fact}  \label{basicFacts}

\begin{enumerate}
\item  $xy=0$, hence $x,y\in I$.
\item    $\bigoplus_V I\subseteq ker(\eps)$.
\item  For any $\cbar=u+\cc+J\in M(V,T)$, $x\cbar=\eps(\cc)+J$ and $y\cbar=T(\eps(\cc)) + J$.
\end{enumerate}
\end{Fact}

\begin{proof} 

(1)  Since $x,y$ are central, $xy\in (x)\cap (y)$, which we assumed to be 0.  Thus, $x\in \mbox{Ann}(y)\subseteq I$, and dually $y\in \mbox{Ann}(x)\subseteq I$.

(2)  As $V$ is an $R/I$-module, $rv=0$ for every $r\in I$.  Thus, for any $\cc\in \bigoplus_V I$, $\eps(\cc)=\sum_{v\in V} r_v v=0$ since each $r_v\in I$.

(3)  Write $\cc=\sum_{v\in V} r_v\ee_v$ with all but finitely many $r_v=0$.  Then $x\cbar=xv+\sum_{v\in V} r_v x\ee_v +J$.
Since $x\in I$ by (1) and $V$ is a left $S$-module, $xv=0$, and $x\ee_v\equiv v$ (mod $J$), hence
$x\cbar=(\sum_{v\in V} r_v v)+J= \eps(\cc)+J$.  Similarly, $y\cbar=yv+\sum_{v\in V} r_v y\ee_v +J$, and since $y\ee_v \equiv T(v)$ (mod $J$)
we have $y\cbar=\sum_{v\in V} r_v T(v) + J =T(\eps(\cc))+J$ by the linearity of $T$.  \qed
\end{proof}

\medskip
\noindent{\bf Claim 1.}  $V\cap J=0$.
\medskip

\begin{proof}  A typical element of ${\bf j}\in J$ is of the form
$${\bf j}=\sum_{v\in V} r_v(x\ee_v-v) \ +\ \sum_{v\in V} r'_v(y\ee_v-T(v))$$
Letting $\aa=\sum_{v\in V} r_v \ee_v$ and $\bb=\sum_{v\in V} r'_v\ee_v$ this becomes
$${\bf j}=x\aa+y\bb-\eps(\aa)-T(\eps(\bb))$$
Now suppose  ${\bf j}\in J$.  As $\eps(\aa),T(\eps(\bb))\in V$ we would have
$x\aa+y\bb\in V$.
Since $V$ is a direct summand, this implies $x\aa+y\bb=0$ in $\bigoplus_V R$.
However, our assumption that $(x)\cap(y)=0$ in $R$ implies that
$(x\bigoplus_V R)\cap (y\bigoplus_V R)=0$, hence $x\aa=y\bb=0$.
Computing this coordinate by coordinate implies both $\aa,\bb\in\bigoplus_V I$.
So $\eps(\aa)=\eps(T(\bb))=0$ by Fact~\ref{basicFacts}(2).
Combining all of these gives ${\bf j}=0$.  \qed
\end{proof}

It follows that the restriction of the canonical map $W\mapsto W/J=M(V,T)$ to $V$ is
an $R$-module isomorphism $\pi:V\rightarrow\{v+J:v\in V\}$.  [It is 1-1 by Claim 1 and is visibly onto.]
From these facts, we can consider both $\pi[V]$ and $\pi[{\rm graph}(T)]$ in the $R$-module $M(V,T)$.

\medskip
\noindent{\bf Claim 2.}  The image $\pi[V]=xM(V,T)$.
\medskip

\begin{proof} Both directions follow from Fact~\ref{basicFacts}(3). 
Given $v\in V$, we have $x(\ee_v+J)=v+J$,  
and conversely, for any $\cbar\in M[V,T]$, $x\cbar=\eps(\cc)+J\in \pi[V]$.  \qed
\end{proof}

\medskip
\noindent{\bf Claim 3.}  For any $v,w\in V$, $T(v)=w$ iff $M(V,T)\models\exists\cbar [x\cbar=\pi(v)\wedge y\cbar=\pi(w)]$.
\medskip
\begin{proof}  Choose any $v,w\in V$.   If $T(v)=w$, then taking $\cbar:=(\ee_v+J)$ works.
Conversely, suppose there is some $\cbar$ such that $x\cbar=\pi(v)=v+J$ and $y\cbar=\pi(w)=w+J$. Write $\cbar = u + \cc + J$.
Fact~\ref{basicFacts}(3) gives $\eps(\cc)+J=v+J$ and $T(\eps(\cc))+J=w+J$.
Thus, by Claim 1, $v=\eps(\cc)$ and $w=T(\eps(\cc))=T(v)$.  \qed
\end{proof}

This finishes the proof of Theorem~\ref{B}.  \qed
\end{proof}

\subsection{Descending chains of annihilator ideals}

Throughout this subsection, we concentrate on countable, commutative rings $R$.
\begin{Definition}  {\em  An ideal $I\subseteq R$ is an {\em annihilator ideal}  if for some $X \subseteq R$, we have $I=\mbox{Ann}_R(X)$, i.e. $I$ is the set of all $a \in R$ with $ax =0$ for all $x \in X$.
}
\end{Definition}

It is easily verified that for $I$ any annihilator ideal, $I=\mbox{Ann}_R(Ann_R(I))$, and in fact $J=\mbox{Ann}_R(I)$ is the largest subset of $R$ annihilated by $I$.
The goal of this subsection is to prove the following theorem.

\begin{Theorem}  \label{C}  
If a  countable, commutative ring $R$ has an infinite, descending sequence $(I_n:n\in\omega)$ of non-zero annihilator ideals satisfying 
$\bigcap_{n\in\omega} I_n=0$, then the theory of $R$-modules is Borel complete.
\end{Theorem}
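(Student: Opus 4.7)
The plan is to mimic the inverse-limit strategy of Theorem~\ref{A}, using the chain $I_n$ in place of $(r^m)$. Form $\Rhat = \varprojlim_n R/I_n$; since $\bigcap I_n = 0$, there is a canonical embedding $R \hookrightarrow \Rhat$. For each $n$, pick $c_n \in I_n \setminus I_{n+1}$. Since $c_n \in I_k$ whenever $n \ge k$, the partial sums $\sum_{n \in s,\, n < N} c_n$ stabilize modulo each $I_k$ as $N \to \infty$, so they define an element $\sigma_s \in \Rhat$ for each $s \subseteq \omega$. The resulting set $\Gamma = \{\sigma_s : s \subseteq \omega\}$ is a Cantor space, intended to play the role of $\Gamma$ in Theorem~\ref{A}.

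By Proposition~\ref{freelikeBC}, it suffices to Borel-reduce free-like tagged $R$-modules to $R$-modules. Following the blueprint of Theorem~\ref{A}, I would first use a Baire-category argument modeled on Lemmas~\ref{ball} and~\ref{Baire} to extract a countably infinite family $\{\gamma_i : i \in \omega\} \subseteq \Gamma$ satisfying an appropriate ``independence'' condition relative to $R$. Then, given a free-like tagged $R$-module $\bar M = (\bigoplus_\omega R, M_n)_{n \in \omega}$, I would set $G(\bar M) \subseteq \bigoplus_\omega \Rhat$ to be the closure of $\bigoplus_\omega R \cup \bigcup_n \gamma_n M_n$ under a suitable ``purity'' operation built from the chain $I_n$. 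The reduction is correct provided one proves analogues of Lemmas~\ref{wd}, \ref{Rhatextend}, and especially \ref{MkChar}, yielding the characterization $M_n = \{\aa \in \bigoplus_\omega R : \gamma_n \aa \in G(\bar M)\}$.

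The hard part is that the single central non-zero divisor non-unit $r$ available in Theorem~\ref{A} has no counterpart here. In degenerate cases such as $R = \mathbb{F}_2^\omega$ with $I_n$ the sequences vanishing on the first $n$ coordinates, the ring $\Rhat$ even collapses back to $R$ and every element of $\Gamma$ is a zero divisor, so neither Lemma~\ref{ball} nor $r$-purity survives verbatim. The remedy I would pursue is to replace the grading by powers of $r$ with the duality $I_n = \mbox{Ann}(J_n)$, using the ascending chain $J_n = \mbox{Ann}(I_n)$ and the strict inclusion $J_{n+1} \cdot I_n \neq 0$ (which follows from $I_n \not\subseteq I_{n+1}$): the purity closure is built by ``division along the chain'' using appropriate elements of the $J_n$ at level $n$, and the independence condition must be reformulated for polynomials whose coefficients are suitably positioned relative to the $J_n$. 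The crux will be aligning these two reformulations so that the recoverability identity analogous to Lemma~\ref{MkChar} still holds; I expect that along the way, it may be useful to quotient $R$ by $\bigcap_n J_n^\perp$ or a similar ideal via Lemma~\ref{R/I} to normalize the chain before carrying out the construction.
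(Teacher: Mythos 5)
Your high-level instincts are good: the paper does form $\Rhat=\varprojlim R/I_n$, does use Baire category to produce a generic family in $\Rhat$, and does exploit the duality $J_n=\mbox{Ann}_R(I_n)$ (its Lemma~\ref{closure} shows $\O_n=\mbox{Ann}_{\Rhat}(J_n)=\overline{I_n}$, which is precisely the replacement you were reaching for). But the detailed plan you sketch has three genuine gaps that the paper's proof resolves in a different way.

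First, the paper does \emph{not} reduce from free-like tagged $R$-modules via Proposition~\ref{freelikeBC}. It reduces from tagged $R/J$-modules, where $J:=\bigcup_n J_n=\bigcup_n\mbox{Ann}_R(I_n)$. This choice is essential, not cosmetic. After forming $G(\Vbar)\subseteq\bigoplus_V\Rhat$, the paper isolates the submodule $J_*$ of elements of $G(\Vbar)$ that annihilate some $\O_n$; this is the ``noise'' introduced because the $\gamma$'s are zero divisors. The whole recovery argument hinges on Fact~\ref{Jstar}(2): $J_*\cap\bigoplus_V R\subseteq\ker(\eps)$, which holds precisely because $V$ is an $R/J$-module and so $J$ acts as zero on $V$. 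If you start from a free-like $\bigoplus_\omega R$, the domain is not annihilated by $J$, the analogue of Fact~\ref{Jstar}(2) fails, and the noise is not absorbed.

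Second, there is no purity closure in the paper's construction. $G(\Vbar)$ is simply the $R$-submodule generated by $\bigoplus_V R\cup\bigcup_{n,m}\gamma_{n,m}\eps^{-1}(V_n)$, and Lemma~\ref{auto} (the analogue of Lemma~\ref{Rhatextend}) extends isomorphisms to $\Rhat$-automorphisms with no purity hypothesis whatsoever, because the analogue of Lemma~\ref{wd} is proved via annihilation by $J_m$ rather than divisibility by $r^m$. So ``division along the chain'' is the wrong mechanism to pursue, and indeed in the degenerate cases you flag it does not exist; the annihilator reformulation is what makes purity unnecessary.

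Third, the paper uses a \emph{doubly}-indexed generic family $\{\gamma_{n,m}:n,m\in\omega\}$, not a single family $\{\gamma_i\}$, and the recovery identity (Fact~\ref{corr}) is $M_n=\{\gg\in G(\Vbar):\gamma_{n,m}\gg\in G(\Vbar)\ \hbox{for all }m\}$, quantifying over the whole row $(\gamma_{n,m})_m$. A single test $\gamma_n\gg\in G(\Vbar)$ as in Lemma~\ref{MkChar} is not enough here: because the $\gamma$'s are zero divisors, $\gamma_n\gg\in G(\Vbar)$ can happen for the wrong reasons, and the perturbation argument in Fact~\ref{corr} (perturb one $\gamma_{n,m_*}$ while freezing all others, using ``sufficient genericity'' formulated via boundaries of zero sets $\partial Z(p)$ rather than algebraic independence) is what isolates the noise term in $J_*$. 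Your formulation of the independence condition via ``coefficients positioned relative to the $J_n$'' does not obviously give this, whereas the topological genericity does so cleanly.

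So the strategy you propose is pointed in the right direction but would not close; the three missing ideas are (i) reduce from tagged $R/J$-modules, not free-like tagged $R$-modules; (ii) drop purity entirely in favor of the annihilator characterization $\O_n=\mbox{Ann}_{\Rhat}(J_n)$ and the noise ideal $J_*$; and (iii) use a doubly-indexed, topologically generic family so that the membership test for $M_n$ can quantify over a whole row of generics.
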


Fix such ring $R$ and sequence $(I_n:n\in\omega)$ for the whole of this subsection.
As in the proof of Theorem~\ref{A}, we form a natural inverse limit ring and endow it with a natural Polish topology.
\begin{Definition}  {\em Let $\Rhat$ be the inverse limit ring $\varprojlim_{n\in\omega} R/I_n$.  Elements of $\Rhat$ are thus coherent $\omega$-sequences $\gamma$ such that $\gamma(n)\in R/I_n$
and, whenever $m\ge n$, $\pi_{m,n}(\gamma(m))=\gamma(n)$, where $\pi_{m,n}:R/I_m\rightarrow R/I_n$ is the canonical map.  We endow $\Rhat$ with a topology as follows:  Give each $R/I_n$ the 
discrete topology, given $\prod_{n\in\omega} R/I_n$ the product topology, and then give $\Rhat\subseteq \prod_{n\in\omega} R/I_n$ the subspace topology.  
}
\end{Definition}

As $\Rhat$ is a closed subspace of a Polish space, it is
Polish itself.  Also, as we can evaluate the ring operations coordinatewise, they are all continuous, so $\Rhat$ is a topological ring.  

Elements of $R/I_n$ correspond to cosets of $R$, so if $\gamma\in \Rhat$ and $n\in\omega$, picking $r\in\gamma(n)$ amounts to writing $\gamma(n)=r+I_n$ for some $r\in R$.
We identify a particular neighborhood basis of $0$ in $\Rhat$.  Namely, for each $n\in\omega$, let $\O_n:=\{\gamma\in \Rhat:\gamma(n)=0\}$.  Note that each $\O_n$ is a clopen ideal of $\Rhat$
and that $\bigcap_{n\in\omega}\O_n=0$.  Note that $\{\O_n+r:n\in\omega,r\in R\}$ forms a basis for the full topology on $\Rhat$.

Since $\bigcap I_n=0$, we can view $R$ as a subring of $\Rhat$.  Under this identification, for each $r\in R$ we view $r$ as the $\omega$-sequence defined by $r(n)=r+I_n$.  $R$ is dense in $\Rhat$, since always $r\in\O_n+r$.  We endow $R$ with the subspace topology from $\Rhat$.  Note then that $I_n=\O_n\cap R$ and $\{I_n:n\in\omega\}$ is a neighborhood basis of 0 in $R$.

For each $n\in \omega$, put $J_n:=\mbox{Ann}_R(I_n)$.  Note that then $J_n$ is also an annihilator ideal and $\mbox{Ann}_R(J_n)=I_n$.  Note also:

\begin{Lemma}  \label{closure}  For each $n\in\omega$, $\O_n=\mbox{Ann}_{\Rhat}(J_n)=\overline{I_n}$.
\end{Lemma}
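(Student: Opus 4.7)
The plan is to prove a cycle of inclusions $\overline{I_n} \subseteq \O_n \subseteq \mbox{Ann}_{\Rhat}(J_n) \subseteq \overline{I_n}$ to get all three sets equal simultaneously, using a mix of topology (density of $R$ in $\Rhat$, continuity of ring operations) and the defining properties of annihilator ideals, in particular that $I_n J_n = 0$ and $\bigcap_m I_m = 0$.

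First, I would verify $\overline{I_n} \subseteq \O_n$: every $r \in I_n$ satisfies $r(n) = r + I_n = 0$, so $I_n \subseteq \O_n$, and $\O_n$ is (cl)open, hence closed. Next, for $\O_n \subseteq \overline{I_n}$, I would fix $\gamma \in \O_n$ and show every basic neighborhood of $\gamma$ meets $I_n$. Any such neighborhood contains some $\gamma + \O_m$ with $m \geq n$; picking $r \in R$ with $r + I_m = \gamma(m)$ gives $r \in \gamma + \O_m$, and restricting along $\pi_{m,n}$ yields $r + I_n = \gamma(n) = 0$, so $r \in I_n$.

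For $\O_n \subseteq \mbox{Ann}_{\Rhat}(J_n)$ (equivalently $\overline{I_n} \subseteq \mbox{Ann}_{\Rhat}(J_n)$), I would fix $s \in J_n$ and observe that the map $\gamma \mapsto \gamma s$ on $\Rhat$ is continuous, so its kernel is closed. Since $I_n J_n = 0$, this closed set contains $I_n$, hence contains $\overline{I_n} = \O_n$.

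The last and only slightly subtle inclusion, $\mbox{Ann}_{\Rhat}(J_n) \subseteq \O_n$, is where I expect to do actual work. Given $\gamma$ in the annihilator and picking any $r \in R$ with $\gamma(n) = r + I_n$, I need to show $r \in I_n = \mbox{Ann}_R(J_n)$, i.e.\ $rs = 0$ for every $s \in J_n$. Looking at the $m$-th coordinate of $\gamma s = 0$ for $m \geq n$ and choosing a representative $r_m$ of $\gamma(m)$, I get $r_m s \in I_m$; coherence forces $r_m - r \in I_n$, and the key cancellation $(r_m - r)s \in I_n J_n = 0$ gives $r_m s = rs$, whence $rs \in I_m$ for every $m$. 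Since $\bigcap_m I_m = 0$, this yields $rs = 0$ as desired. Combining the four inclusions proves the lemma.
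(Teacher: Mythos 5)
Your proof is correct and takes essentially the same approach as the paper: both establish the equalities via a chain of inclusions, using that $I_n$ is contained in the (closed) annihilator of $J_n$ in $\Rhat$, that coherent representatives of $\gamma(m)$ lying in $I_n$ converge to any $\gamma\in\O_n$, and a coherence-plus-$I_nJ_n=0$ argument (with $\bigcap_m I_m=0$) for $\mbox{Ann}_{\Rhat}(J_n)\subseteq\O_n$. The only cosmetic difference is that you prove four inclusions and argue the last one directly, whereas the paper does three and proves that inclusion by contrapositive.
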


\begin{proof}  It is clear that $\mbox{Ann}_{\Rhat}(J_n)$ contains $\mbox{Ann}_R(J_n)=I_n$ and is closed, so $\overline{I_n}\subseteq \mbox{Ann}_{\Rhat}(J_n)$.


Next, we show that $Ann_{\Rhat}(J_n)\subseteq\O_n$.  To see this, choose $\gamma\not\in \O_n$, i.e., $\gamma(n)\neq 0$.  For each $m\ge n$, choose $r_m\in\gamma(m)$.  Since $r_n\not\in I_n$,
there is $x\in J_n$ such that $r_nx\neq 0$.  Choose $m\ge n$ large enough so that $r_n x \not \in I_m$.  Since $r_m-r_n\in I_n$ by coherence, we have $r_mx-r_nx\in I_nx=0$, so $r_mx\not\in I_m$.  But then
$(\gamma x)(m)=r_mx+I_m\neq 0$, so $\gamma x\neq 0$, i.e., $\gamma$ does not annihilate $J_n$.

Finally, we show $\O_n\subseteq\overline{I_n}$.  But this is clear, since given $\gamma\in \O_n$, if we let $r_n\in\gamma(n)$, then $\{r_n:n\in\omega\}$ is a sequence from $I_n$ converging to $\gamma$.  \qed
\end{proof}

As in the proof of Theorem~\ref{A}, we pass to the direct sums $\bigoplus_\omega R$ and $\bigoplus_\omega \Rhat$.

Using Lemma~\ref{closure} we prove definability results that are analogous to Lemmas~\ref{wd} and \ref{Rhatextend}.

\begin{Lemma}  Suppose $(\delta_i:i<n)\in \Rhat^{\,n}$ are given and, for each $m$, choose $r_{i,m}\in\delta_i(m)$.
Then for all $(\aa_i:i<n)\in \bigoplus_\omega \Rhat$, we have
$$\sum_{i<n} \delta_i\aa_i=0\quad \hbox{if and only if $J_m\cdot(\sum_{i<n} r_{i,m}\aa_i)=0$ for every $m$}$$
\end{Lemma}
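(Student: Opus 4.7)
The plan is to reduce both directions of the biconditional to the single identity
\[
\delta_i - r_{i,m} \in \O_m \qquad (i<n,\ m\in\omega),
\]
which is just the definition of $r_{i,m}\in \delta_i(m)$ once we view $r_{i,m}$ as a constant element of $\Rhat$. From here the whole argument is driven by Lemma~\ref{closure}, which tells us $\O_m = \mbox{Ann}_{\Rhat}(J_m)$.

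First I would translate this annihilator statement to the direct sum $\bigoplus_\omega \Rhat$. Coordinatewise, an element $\bb \in \bigoplus_\omega \Rhat$ lies in $\mbox{Ann}_{\bigoplus_\omega \Rhat}(J_m)$ iff each coordinate of $\bb$ lies in $\mbox{Ann}_{\Rhat}(J_m)=\O_m$, which is to say $\bb \in \bigoplus_\omega \O_m$. And conversely, using the standard basis vectors, every element of $\bigoplus_\omega \O_m$ is visibly of the form $\O_m \cdot \bigoplus_\omega \Rhat$. So we obtain the key identification
\[
\mbox{Ann}_{\bigoplus_\omega \Rhat}(J_m) \;=\; \bigoplus_\omega \O_m \;=\; \O_m\cdot \bigoplus_\omega \Rhat.
\]

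For the forward direction, if $\sum_i \delta_i\aa_i=0$, then $\sum_i r_{i,m}\aa_i = -\sum_i(\delta_i-r_{i,m})\aa_i$ lies in $\O_m\cdot\bigoplus_\omega\Rhat$, hence is annihilated by $J_m$. For the reverse, the hypothesis says $\sum_i r_{i,m}\aa_i \in \bigoplus_\omega \O_m$ for every $m$. Rewriting $\sum_i\delta_i\aa_i = \sum_i r_{i,m}\aa_i + \sum_i(\delta_i-r_{i,m})\aa_i$, both pieces lie in $\bigoplus_\omega \O_m$, so $\sum_i\delta_i\aa_i\in\bigoplus_\omega \O_m$ for every $m$. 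Taking the intersection over $m$ and using $\bigcap_m \O_m = 0$ (from the definition of $\Rhat$ as an inverse limit, i.e., $\bigcap_m \O_m = \{\gamma : \gamma(m)=0 \text{ for all } m\} = 0$) forces $\sum_i\delta_i\aa_i = 0$.

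The only genuinely nontrivial step is the identification $\mbox{Ann}_{\bigoplus_\omega \Rhat}(J_m) = \O_m\cdot\bigoplus_\omega\Rhat$; this is where the direct sum structure (rather than a direct product) and Lemma~\ref{closure} are both used essentially. Once that is in hand, the rest is bookkeeping, translating a statement about vanishing in an inverse limit into a family of annihilator conditions indexed by $m$, in exact analogy with Lemma~\ref{wd} from the proof of Theorem~\ref{A}.
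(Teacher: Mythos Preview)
Your proof is correct and follows essentially the same approach as the paper's: both hinge on Lemma~\ref{closure} (that $\O_m=\mbox{Ann}_{\Rhat}(J_m)$) together with the observation that $\delta_i-r_{i,m}\in\O_m$. The only difference is presentational: the paper immediately reduces coordinatewise to $\Rhat$ and notes that $(\sum_i\delta_ia_i)(m)=(\sum_i r_{i,m}a_i)(m)$, whereas you carry out the same computation at the level of $\bigoplus_\omega\Rhat$ via the identification $\mbox{Ann}_{\bigoplus_\omega\Rhat}(J_m)=\bigoplus_\omega\O_m$.
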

\begin{proof}  Proceeding coordinate-wise, it suffices to prove that for all $(a_i:i<n)$ from $\Rhat$, $\sum_{i<n} \delta_i a_i=0$ if and only if $J_m \cdot (\sum_{i<n} r_{i,m}a_i)=0$ for every $m$.
Note that for each $m$, $(\sum_{i<n}\delta_i a_i)(m)=(\sum_{i<n} r_{i,m}a_i)(m)$.  Since $\sum_{i<n}\delta_ia_i=0$ if and only if each $(\sum_{i<n} \delta_ia_i)(m)=0$,
we conclude that $\sum_{i<n}\delta_ia_i=0$ if and only if $(\sum_{i<n} r_{i,m}a_i)(m)=0$  for all $m$.  By Lemma~\ref{closure}, this holds if and only if
each $J_m\cdot(\sum_{i<n} r_{i,m}a_i)=0$.  \qed
\end{proof}

Following the argument (but without the assumption of $r$-purity) in \ref{Rhatextend}, we immediately obtain:

\begin{Lemma}  \label{auto}  Suppose $\bigoplus_\omega R\subseteq G,H\subseteq \bigoplus_\omega \Rhat$ are $R$-submodules and $f:G\rightarrow H$ is an
$R$-module isomorphism.  Then there is an $\Rhat$-automorphism $f^*$ of $\bigoplus_\omega \Rhat$ extending $f$.
\end{Lemma}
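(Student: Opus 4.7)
The plan is to mimic the proof of Lemma~\ref{Rhatextend} essentially verbatim, with the preceding well-definedness lemma playing the role that Lemma~\ref{wd} played there. Specifically, I would define
$$f^*\left(\sum_{i<n}\delta_i\aa_i\right) := \sum_{i<n}\delta_i f(\aa_i)$$
for $\delta_i\in \Rhat$ and $\aa_i\in G$. The first preliminary observation is that every element of $\bigoplus_\omega \Rhat$ actually has a representation of the form $\sum_{i<n}\delta_i\aa_i$ with $\aa_i\in G$, since $\bigoplus_\omega\Rhat$ is generated over $\Rhat$ by the standard basis of $\bigoplus_\omega R\subseteq G$. The analogous remark applied to $H$ shows that $f^*$ takes values in $\bigoplus_\omega \Rhat$.

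The only non-routine point is well-definedness, i.e.\ showing that if $\sum_{i<n}\delta_i\aa_i=0$ then $\sum_{i<n}\delta_i f(\aa_i)=0$. For this I would pick representatives $r_{i,m}\in\delta_i(m)$; by the preceding lemma the hypothesis is equivalent to $J_m\cdot(\sum_{i<n} r_{i,m}\aa_i)=0$ for every $m$. Since $f$ is $R$-linear and each element of $J_m$ lies in $R$, applying $f$ to $s\sum_{i<n} r_{i,m}\aa_i=0$ for each $s\in J_m$ gives $s\sum_{i<n} r_{i,m}f(\aa_i)=0$, i.e.\ $J_m\cdot(\sum_{i<n} r_{i,m}f(\aa_i))=0$ for every $m$. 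Running the preceding lemma in the opposite direction then yields $\sum_{i<n}\delta_i f(\aa_i)=0$, as required.

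Once well-definedness is in hand, everything else is formal. The map $f^*$ is $\Rhat$-linear directly from the defining formula, and taking $n=1$ with $\delta_0=1$ shows that $f^*$ extends $f$. To see that $f^*$ is bijective, I would apply the same construction to $f^{-1}:H\to G$ to obtain $(f^{-1})^*$; both compositions $f^*\circ (f^{-1})^*$ and $(f^{-1})^*\circ f^*$ agree with the identity on the generating set $\bigoplus_\omega R$ and are $\Rhat$-linear, hence are the identity on $\bigoplus_\omega \Rhat$. Thus $f^*$ is the desired $\Rhat$-automorphism extending $f$.
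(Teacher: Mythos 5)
Your proposal is correct and follows exactly the route the paper intends: the paper's proof of Lemma~\ref{auto} is the one-line remark ``Following the argument (but without the assumption of $r$-purity) in \ref{Rhatextend}, we immediately obtain,'' and you have simply unpacked that, defining $f^*$ by the same formula and using the preceding well-definedness lemma in place of Lemma~\ref{wd}. The extra details you supply (that $\bigoplus_\omega\Rhat = \Rhat G = \Rhat H$ since both contain $\bigoplus_\omega R$, and the explicit inverse via $(f^{-1})^*$) are exactly the ones left implicit in the paper.
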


\begin{Definition}  {\em  If $p(x_1,\dots,x_n)\in \Rhat[x_1,\dots,x_n]$, let 
$Z(p):=\{\abar\in \Rhat^{\,n}: p(\abar)=0\}$ be its zero set.  Any such $Z(p)$ is closed in $\Rhat^{\,n}$.  
Given any $X\subseteq \Rhat^{\,n}$, let $\partial X$ denote its boundary, i.e., the  set of all $\abar\in \Rhat^{\,n}$ such that any open set containing $\abar$ intersects both
$X$ and its complement.  Any such $\partial X$ is closed and nowhere dense.

Say that a subset $\Gamma_0\subseteq \Rhat$ is {\em $R$-sufficiently generic} if, for all $n$,  for all sequences $(\gamma_1,\dots,\gamma_n)$ of distinct elements of $\Gamma_0$,
and for all $p(x_1,\dots,x_n)\in R[x_1,\dots,x_n]$, $(\gamma_1,\dots,\gamma_n)\not\in\partial Z(p)$.
}
\end{Definition}

As $R$ is countable and the topology on $\Rhat$ is Polish, the following lemma follows from the Baire category theorem.

\begin{Lemma} \label{existssuitably} An infinite sufficiently generic $\Gamma_0\subseteq \Rhat$ exists.
\end{Lemma}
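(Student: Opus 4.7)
The plan is to apply the Baire Category Theorem in the Polish space $\Rhat^\omega$ to select a sequence $\bar\gamma = (\gamma_i)_{i \in \omega}$ whose range $\Gamma_0 := \{\gamma_i : i \in \omega\}$ is infinite and $R$-sufficiently generic.

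For each $n \ge 1$, each polynomial $p \in R[x_1, \ldots, x_n]$, and each injection $\sigma : \{1, \ldots, n\} \to \omega$, set
$$B_{p, \sigma} := \{\bar\gamma \in \Rhat^\omega : (\gamma_{\sigma(1)}, \ldots, \gamma_{\sigma(n)}) \in \partial Z(p)\}.$$
Since $R$ is countable, there are only countably many such pairs $(p, \sigma)$. The key claim is that each $B_{p, \sigma}$ is closed and nowhere dense in $\Rhat^\omega$. Closedness is immediate from continuity of the coordinate projection $\pi_\sigma : \Rhat^\omega \to \Rhat^n$ together with closedness of $\partial Z(p)$. For nowhere-denseness, $\partial Z(p) = Z(p) \setminus \mbox{int}(Z(p))$ is closed with empty interior in $\Rhat^n$, and $\pi_\sigma$ is a continuous open surjection, so the preimage under $\pi_\sigma$ of any closed nowhere dense set remains closed nowhere dense.

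Separately, $\Rhat$ has no isolated points: for any $\gamma \in \Rhat$ and any $n$, the basic open neighborhood $\gamma + \O_n$ contains $\gamma + x$ for every $x \in I_n$, and $I_n \neq 0$ by hypothesis. Hence the set $D := \{\bar\gamma \in \Rhat^\omega : \gamma_i \neq \gamma_j \hbox{ for all } i \neq j\}$ is a dense $G_\delta$, being the countable intersection of the open dense sets $D_{i, j} := \{\bar\gamma : \gamma_i \neq \gamma_j\}$.

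By Baire's theorem applied to $\Rhat^\omega$, the meager set $\bigcup_{p, \sigma} B_{p, \sigma}$ has comeager complement, which meets $D$ in a (comeager, hence nonempty) set. Any $\bar\gamma$ chosen from this intersection produces the desired infinite $R$-sufficiently generic $\Gamma_0$, since the distinctness condition guarantees that tuples indexed by injections $\sigma$ correspond exactly to sequences of distinct elements of $\Gamma_0$. The only step of actual content is verifying the nowhere-denseness of $B_{p, \sigma}$, which reduces to the standard fact that open continuous surjections pull nowhere dense closed sets back to nowhere dense closed sets.
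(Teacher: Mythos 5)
Your proposal is correct and carries out exactly the Baire category argument the paper invokes without elaboration ("As $R$ is countable and the topology on $\Rhat$ is Polish, the following lemma follows from the Baire category theorem"). Working in $\Rhat^\omega$ with the countable family of closed nowhere-dense sets $B_{p,\sigma}$, together with the dense $G_\delta$ set of injective sequences (which needs the perfectness of $\Rhat$, correctly deduced from each $I_n$ being nonzero and $I_n \subseteq \mathcal{O}_n$), is a clean way to package the argument; an equivalent alternative, echoing the proof of Lemma~\ref{existindep}, would be to iterate Baire category in $\Rhat$ itself to extend any finite sufficiently generic set by one element, but the two are essentially the same.
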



Enumerate $\Gamma_0$ as $\{\gamma_{n,m}:n,m\in\omega\}$ and put $J:=\bigcup_{n\in\omega} J_n$.
To conclude that the theory of $R$-modules is Borel complete, we exhibit a Borel reduction from tagged $R/J$-modules to $R$-modules.
Fix any countable tagged $R/J$-module $\Vbar=(V,V_n)_{n\in\omega}$.  For notational simplicity later, assume that $V_0=V$ and $V_1 = 0$.
To construct the Borel embedding, first consider $\bigoplus_V R$ as an $R$-module with standard basis $\{\ee_v:v\in V\}$
and let $\eps:\bigoplus_V R\rightarrow V$ denote the $R$-module surjection given by $\eps(\sum_{v\in V} r_v)=\sum_{v\in V} r_v v$.
Let $G(\Vbar)$ be the $R$-submodule of $\bigoplus_V \Rhat$ generated by $\bigoplus_V R\cup\bigcup_{n,m} \gamma_{n,m} \eps^{-1}(V_n)$.

We argue that the map $\Vbar\mapsto G(\Vbar)$ is a Borel reduction.  The Borelness is clear, and if $\Vbar\cong \Vbar'$ as tagged $R/J$-modules, then the $R$-modules
$G(\Vbar)$ and $G(\Vbar')$ are isomorphic.  The reverse direction requires more work.  

Given any $\Vbar$, let $$J_*:=\{\gg\in G(\Vbar):\gg\ \hbox{annihilates some $\O_n$}\}$$
$J_*$ is evidently an $R$-submodule of $G(\Vbar)$ but more is true.

\begin{Fact}  \label{Jstar}
\begin{enumerate}
\item  $\Rhat J_*=J_*$, hence $J_*$ can be construed as an $\Rhat$-module.
\item  $J_*\cap \bigoplus_V R\subseteq \ker(\eps)$.
\end{enumerate}
\end{Fact}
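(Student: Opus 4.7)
The plan is to prove the two parts separately, relying on the coordinate-wise nature of the $\Rhat$-action on $\bigoplus_V \Rhat$ and on the density of $R$ in $\Rhat$.

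For part (2), I would unpack what it means for $\gg = \sum_{v\in V} r_v \ee_v \in \bigoplus_V R$ to lie in $J_*$, i.e.\ to annihilate some $\O_n$. Because scalar multiplication on $\bigoplus_V \Rhat$ is coordinate-wise, the equation $\alpha\gg = 0$ for every $\alpha\in\O_n$ forces $\alpha r_v = 0$ in $\Rhat$ for each $v$ in the support of $\gg$. Specializing $\alpha$ to range over $I_n \subseteq \O_n \cap R$ gives $r_v \in \mbox{Ann}_R(I_n) = J_n \subseteq J$. Since $V$ is an $R/J$-module, each $r_v v = 0$, so $\eps(\gg) = \sum_v r_v v = 0$.

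For part (1), the key point is that although $G(\Vbar)$ is only an $R$-submodule of $\bigoplus_V \Rhat$ and is not a priori closed under multiplication by $\Rhat$, the $\O_n$-annihilator hypothesis on elements of $J_*$ lets me trade $\Rhat$-multiplication for $R$-multiplication. Given $\gg \in J_*$ annihilating $\O_n$ and an arbitrary $\delta \in \Rhat$, I would exploit density of $R$ in $\Rhat$: pick any $r \in R$ with $r + I_n = \delta(n)$, so that $\alpha := \delta - r$ satisfies $\alpha(n) = 0$ and hence $\alpha \in \O_n$. Then
$$\delta\gg \;=\; r\gg + \alpha\gg \;=\; r\gg,$$
which lies in $G(\Vbar)$ because $G(\Vbar)$ is an $R$-submodule. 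A one-line calculation using commutativity of the $\Rhat$-action (namely $\beta(\delta\gg) = \delta(\beta\gg) = 0$ for $\beta\in\O_n$) shows that $\delta\gg$ still annihilates $\O_n$, so $\delta\gg \in J_*$. Combined with the trivial inclusion $J_* \subseteq \Rhat J_*$, this gives $\Rhat J_* = J_*$.

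I do not anticipate any real obstacle: the entire content of the argument is the decomposition $\delta = r + \alpha$ with $r \in R$ and $\alpha \in \O_n$ used in part (1). Once one notices that annihilating $\O_n$ is preserved under the full $\Rhat$-action and that the $\alpha$-summand is killed, $\Rhat$-closure of $J_*$ reduces to the obvious $R$-closure of $G(\Vbar)$.
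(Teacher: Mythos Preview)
Your proposal is correct and follows essentially the same route as the paper's proof: both parts hinge on the decomposition $\delta = r + (\delta - r)$ with $r\in\delta(n)$ and $\delta - r\in\O_n$ for part (1), and on specializing to $I_n\subseteq\O_n$ to force each coefficient into $J_n$ for part (2). The only cosmetic difference is that the paper concludes $r\gg\in J_*$ directly by noting $J_*$ is an $R$-submodule, whereas you first land in $G(\Vbar)$ and then separately re-verify the $\O_n$-annihilation; both are fine.
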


\begin{proof}
(1) Choose any $\gamma\in \Rhat$ and $\jj\in J_*$.  Say $\jj$ annihilates $\O_n$.  Choose any $r\in \gamma(n)$.
Then as $\gamma-r\in \O_n$, $(\gamma-r)\jj=0$.  Thus, $\gamma \jj=r\jj\in J_*$ since $J_*$ is an $R$-module.

(2)  Choose any $\aa=\sum_{v\in V} r_v\ee_v\in J_*$.  Choose $n$ so that $\aa$ annihilates $\O_n$.  Choose any $i\in I_n$.
As $i\aa=0$, we have $\sum_{v\in V} (ir_v)\ee_v=0$.  Thus, for every $v\in V$, $ir_v=0$.  As this holds for every $i\in I_n$, each $r_v\in J_n\subseteq J$.
Thus, $\aa=\sum_{v\in V} j_v\ee_v$ with every coefficient in $J$.  But $V$ is an $R/J$-module, so every $j\in J$ annihilates $V$. 
Thus, $\eps(\aa)=\sum_{v\in V} j_v v=0$, i.e, $\aa\in\ker(\eps)$.  \qed
\end{proof}

Put $M:=\bigoplus_V R + J_*$ (not a direct sum!) and put $M_n:=\eps^{-1}(V_n)+J_*$.  (Note that as $V_0=V$ and $V_1 = 0$, we have $M_0=M$ and $M_1 = \mbox{ker}(\epsilon) + J_*$).

\begin{Fact}  \label{corr}  For each $n\in\omega$, $$M_n=\{\gg\in G(\Vbar):\gamma_{n,m}\gg\in G(\Vbar)\ \hbox{for all $m\in\omega$}\}$$
\end{Fact}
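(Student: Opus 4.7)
The plan is to prove the two containments separately.  The forward containment $\eps^{-1}(V_n)+J_*\subseteq\{\gg\in G(\Vbar):\gamma_{n,m}\gg\in G(\Vbar)\text{ for all }m\}$ is immediate from the construction: if $\gg=\aa+\jj$ with $\aa\in\eps^{-1}(V_n)$ and $\jj\in J_*$, then $\gamma_{n,m}\aa$ is one of the defining generators of $G(\Vbar)$, while $\gamma_{n,m}\jj\in\Rhat\cdot J_*=J_*\subseteq G(\Vbar)$ by Fact~\ref{Jstar}(1).

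For the reverse containment, suppose $\gg\in G(\Vbar)$ satisfies $\gamma_{n,m}\gg\in G(\Vbar)$ for every $m$.  Fix a representation $\gg=\bb+\sum_{(i,j)\in F}\gamma_{i,j}\cc_{i,j}$ with $F$ finite, $\bb\in\bigoplus_V R$, and $\cc_{i,j}\in\eps^{-1}(V_i)$; choose $m$ so large that $(n,m)\notin F$, and pick a similar representation $\gamma_{n,m}\gg=\bb^{(m)}+\sum_{(i',j')\in F_m}\gamma_{i',j'}\cc^{(m)}_{i',j'}$ inside $G(\Vbar)$.  For each $v\in V$, equating the two expressions for $\gamma_{n,m}\gg$ at the $v$-coordinate produces an identity $p_v(\vec\gamma)=0$ for some $p_v\in R[\vec y]$ in variables indexed by $\{(n,m)\}\cup F\cup F_m$.

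The main step is to apply sufficient genericity of $\Gamma_0$: each $p_v$ must vanish on an open neighborhood of the form $\prod_\delta(\gamma_\delta+\O_{M_v})$.  Substitute $\gamma_{n,m}+t$ for the variable corresponding to $\gamma_{n,m}$ while leaving the others at $\gamma_\delta$.  A direct expansion, using $p_v(\vec\gamma)=0$ to cancel the $t$-independent terms, yields $t\cdot h_v=0$ for every $t\in\O_{M_v}$, where $h_v=g_v$ (the $v$-coordinate of $\gg$) if $(n,m)\notin F_m$, and $h_v=g_v-c^{(m)}_{(n,m),v}$ if $(n,m)\in F_m$.  Since only finitely many $v$'s are relevant, we may take a single $M$ so that $h_v\cdot\O_M=0$ for each such $v$.

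In the first sub-case, $\gg$ annihilates $\O_M$ at every coordinate, so $\gg\in J_*\subseteq M_n$.  In the second, $\gg-\cc^{(m)}_{(n,m)}$ annihilates $\O_M$ coordinatewise and lies in $G(\Vbar)$, hence in $J_*$; since $\cc^{(m)}_{(n,m)}\in\eps^{-1}(V_n)$, we obtain $\gg\in\eps^{-1}(V_n)+J_*=M_n$.  The principal obstacle is handling the potential $\gamma_{n,m}$-term in the representation of $\gamma_{n,m}\gg$, but this is exactly what supplies the required element of $\eps^{-1}(V_n)$, so the two sub-cases align.  The critical input throughout is that sufficient genericity localizes the vanishing of $p_v$ to a neighborhood $\gamma_\delta+\O_M$ rather than to a single point, which is precisely what detects membership in $J_*$.
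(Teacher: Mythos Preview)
Your argument is correct and follows essentially the same route as the paper: write $\gg$ and $\gamma_{n,m}\gg$ in generator form, apply sufficient genericity coordinatewise to upgrade the single vanishing $p_v(\vec\gamma)=0$ to vanishing on a neighborhood, then perturb only the $\gamma_{n,m}$-coordinate (which appears to degree~$1$ since $(n,m)\notin F$) to conclude $\O_M\cdot(\gg-\cc^{(m)}_{(n,m)})=0$. The paper simply avoids your two sub-cases by allowing the coefficient $\bb_{n,m_*}$ to be zero, which absorbs your first case into the second.
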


\begin{proof}  First suppose $\gg\in M_n$.  Say $\gg=\aa+\jj$ with $\aa\in \eps^{-1}(V_n)$ and $\jj\in J_*$ and choose any $\gamma_{n,m}\in \Gamma_0$.
Then $\gamma_{n,m}\aa$ is a generator of $G(\Vbar)$ and $\gamma_{n,m}\jj\in J_*\subseteq G(\Vbar)$ by Fact~\ref{Jstar}(1), so $\gamma_{n,m}\aa\in G(\Vbar)$.


Conversely, suppose $\gg\in G(\Vbar)$ has $\gamma_{n,m} \mathbf{g} \in G(\overline{V})$ for all $m \in \omega$.  Write $\gg=\aa+\sum_{n',m'}\gamma_{n',m'}\aa_{n',m'}$ with $\aa\in\bigoplus_V R$ and $\aa_{n',m'}\in\eps^{-1}(V_{n'})$ and all but finitely many $\mathbf{a}_{n',m'}=0$.
Fix $m^*\in\omega$ so that $\mathbf{a}_{n,m_*}=0$.  As we are assuming $\gamma_{n,m^*}\gg\in G(\Vbar)$, we can similarly write
$$\gamma_{n,m^*}\gg=\bb+\sum_{n',m'} \gamma_{n',m'}\bb_{n'm'}.$$
Rewriting, we have
$$-\mathbf{b} - \sum_{(n', m') \not= (n, m_*)} \gamma_{n', m'} \mathbf{b}_{n',m'} + \gamma_{n, m_*}\left(\mathbf{a}+ \sum_{n', m'} \gamma_{n', m'} \mathbf{a}_{n', m'}-\mathbf{b}_{n, m_*} \right) = 0.$$

Recall that all of this takes place within $\bigoplus_V \hat{R}$. By sufficient genericity applied at each coordinate, we get that there is some open $\mathcal{O}_k$ such that the above equation remains true whenever we perturb $\gamma_{n', m'}$ by anything in $\mathcal{O}_k$. By perturbing $\gamma_{n, m_*}$ and leaving fixed all other $\gamma_{n', m'}$ (and using $\mathbf{a}_{n, m_*} = 0$) we get that $\mathcal{O}_k \left(\mathbf{a}+ \sum_{n', m'} \gamma_{n', m'} \mathbf{a}_{n', m'}-\mathbf{b}_0 \right) = 0$, i.e. $\mathcal{O}_k(\mathbf{g} - \mathbf{b}_0) = 0$, i.e. $\mathbf{g} - \mathbf{b}_0 \in J_*$. Since $\mathbf{b}_0 \in \epsilon^{-1}(M_n)$ we are done.  \qed
\end{proof}

Next, extend the $R$-homomorphism $\eps$ to $\eps_*:M\rightarrow V$ by $\eps_*(\aa+\jj)=\aa$.  By Fact~\ref{Jstar}(2), $\eps_*$ is a well-defined $R$-homomorphism and is surjective because $\eps$ was. 
Let $K_*:=\ker(\eps_*)$.   It follows that $\eps_*$ induces an isomorphism between the tagged $R$-modules $(M/K_*,M_n/K_*)_{n \in \omega}$ and $\overline{V}$.

Recall that $V_0 = V$ and $V_1 = 0$.  Hence $M_0 = M$; also, an easy computation shows that $M_1 = K_*$. 

With the above results, we now finish. Suppose there is an $R$-module isomorphism $f:G(\Vbar)\rightarrow G(\Vbar')$, Then by Lemma~\ref{auto}, $f$ extends to an $\Rhat$-isomorphism, and by Fact~\ref{corr}, $f$ maps $M_n$ onto $M_n'$ for each $n$ (and hence maps $M$ to $M'$ and $K_*$ to $K_*'$). Thus $\overline{V} \cong (M/K_*, M_n/K_*)_{n \in \omega} \cong (M'/K'_*, M'_n/K'_*)_{n \in \omega} \cong \overline{V}'$. \qed

\section{Proof of Theorem~\ref{big}}   \label{TWO}

We conclude the paper by using the results above to prove Theorem~\ref{big}.  In light of Theorem~\ref{classical}  it suffices to show that if
$R$ is a commutative, countable ring for which the theory of $R$-modules is not Borel complete, then $R$ is an Artinian principal ideal ring.

\subsection{Gathering Ingredients}


In this section we collect together some algebraic facts and some corollaries of our previous theorems. First we recall some definitions for a commutative ring $R$:

\begin{Definition} \label{commalg} {\em 
\begin{enumerate}
	\item An ideal $\pp$ is {\em prime} if $R/\pp$ is an integral domain, i.e. $ab \in \pp$ implies $a \in \pp$ or $b \in \pp$.
\item An ideal $\m$ is {\em maximal} if $R/\m$ is a field, i.e. $\m$ is a maximal element of the poset of proper ideals of $R$. (Any maximal ideal is prime.)
\item An element $a \in R$ is {\em nilpotent} if $a^n = 0$ for some $n$. An ideal $I$ is nilpotent if $I^n = 0$ for some $n$. $I$ is {\em locally nilpotent} (or nil) if each element of $I$ is nilpotent.
\item The {\em Jacobson radical} of $R$ is the intersection of all maximal ideals of $R$.
\item The {\em  nilradical} of $R$ is the intersection of all prime ideals, or equivalently, the set of all nilpotent elements of $R$. This is contained in the Jacobson radical.
\item $R$ is {\em Artinian} if there are no infinite descending chains of ideals. $R$ is {\em Noetherian} if every ideal is finitely generated, i.e. there are no infinite ascending chains of ideals.
\item $R$ is a {\em principal ideal ring} if each ideal of $R$ is principal, i.e., is generated by a single element.
\item $R$ is {\em local} if it has a unique  maximal ideal.
\item An element $e \in R$ is {\em idempotent} if $e^2 = e$. Two idempotents $e, e'$ are {\em orthogonal} if $e e' = 0$.
\end{enumerate}
}
\end{Definition}

\begin{Lemma}[Chinese Remainder Theorem]
\label{CRT}   Suppose $R$ is a commutative ring and $(\m_i: i < n)$ is a sequence of distinct maximal ideals of $R$. Then the natural homomorphism $R \to \prod_{i < n} R/\m_i$ is surjective.
\end{Lemma}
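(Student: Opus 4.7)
The plan is to reduce to producing, for each $i<n$, an ``indicator'' element $e_i\in R$ that maps to $1+\m_i$ in the $i$th factor and to $0+\m_j$ in every other factor. Once these $e_i$ are in hand, given any target tuple $(a_0+\m_0,\dots,a_{n-1}+\m_{n-1})$, the element $\sum_{i<n} a_i e_i\in R$ maps onto it, establishing surjectivity.

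The first step is to observe that distinct maximal ideals are comaximal: for $i\neq j$, the ideal $\m_i+\m_j$ properly contains $\m_i$ (since $\m_j\not\subseteq\m_i$ by maximality of $\m_j$ together with $\m_i\neq\m_j$), so by maximality of $\m_i$ we get $\m_i+\m_j=R$. Hence for each $j\neq i$ we may pick $x_j\in\m_i$ and $y_j\in\m_j$ with $x_j+y_j=1$.

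The second step is to define $e_i:=\prod_{j\neq i} y_j$. For each $j\neq i$ the factor $y_j\in\m_j$ lies in the product, so $e_i\in\m_j$ (using commutativity and that $\m_j$ is an ideal). On the other hand, $e_i=\prod_{j\neq i}(1-x_j)$; expanding this product, every term except the constant $1$ contains some $x_j\in\m_i$ as a factor, so $e_i\equiv 1\pmod{\m_i}$. This is exactly what was required.

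The ``hard'' part is really just the comaximality observation and the clean choice of $e_i$; no serious obstacle arises, since commutativity makes the product $\prod_{j\neq i} y_j$ well-defined and the expansion $\prod(1-x_j)$ straightforward. An alternative would be induction on $n$ using the two-ideal case $\m_1(\m_2\cap\cdots\cap\m_n)+(\m_2\cap\cdots\cap\m_n)\m_1=\cdots$, but the direct construction above seems both shorter and more transparent.
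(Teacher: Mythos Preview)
Your argument is correct: the comaximality of distinct maximal ideals, the construction of the idempotent-like elements $e_i=\prod_{j\neq i}y_j$, and the linear combination $\sum_i a_i e_i$ all work exactly as you describe. The paper itself does not give a proof but simply cites Lang's \emph{Algebra} (Theorem~2.1), so there is no substantive approach to compare against; what you have written is essentially the standard textbook proof that such a citation would point to.
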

\begin{proof}  This is a special case of Theorem 2.1 of \cite{Lang}.
\end{proof}

\begin{Lemma} [Hopkins-Levitsky]     \label{ArtinianImpliesNoetherian}
If $R$ is Artinian then $R$ is Noetherian. (This holds for all rings, not just the commutative case.)
\end{Lemma}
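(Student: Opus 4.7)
This is the classical Hopkins--Levitzki theorem. My plan is to prove it in three stages: (i) the Jacobson radical $J = \mathrm{rad}(R)$ is nilpotent; (ii) $R/J$ is semisimple and every Artinian $R/J$-module is Noetherian; (iii) these combine via the finite filtration $R \supseteq J \supseteq J^2 \supseteq \cdots \supseteq J^n = 0$ to give Noetherianity of $R$. The real work is in (i); the rest is bookkeeping.

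For (i), the chain $J \supseteq J^2 \supseteq \cdots$ stabilizes by DCC at some $J^n = J^{n+1}$. Suppose for contradiction $J^n \ne 0$. By DCC applied to ideals, choose an ideal $I$ minimal with respect to $IJ^n \ne 0$, and pick $x \in I$ with $xJ^n \ne 0$. The principal ideal $(x)$ then lies in this family and is contained in $I$, so minimality forces $I = (x)$. Next, $(xJ^n) \cdot J^n = xJ^{2n} = xJ^n \ne 0$ and $(xJ^n) \subseteq (x) = I$, so another appeal to minimality gives $(xJ^n) = (x)$; in particular $x = xy$ for some $y \in J$. Since $y \in J$, $1 - y$ is a unit of $R$, and $(1-y)x = 0$ forces $x = 0$, contradicting $xJ^n \ne 0$. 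Hence $J^n = 0$.

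Having $J$ nilpotent, stage (ii) is quick in the commutative case: $R/J$ is a reduced Artinian ring, and the DCC applied to finite intersections of maximal ideals of $R$ forces $R$ to have only finitely many maximal ideals $\m_1, \ldots, \m_k$. By the Chinese Remainder Theorem (Lemma~\ref{CRT}), $R/J \cong \prod_{i \le k} R/\m_i$, a finite product of fields; this is Noetherian, and every Artinian module over it is a finite direct sum of simple modules, hence Noetherian. For stage (iii), each successive factor $J^i / J^{i+1}$ of the filtration is annihilated by $J$, so is naturally an $R/J$-module; it inherits the DCC from $R$ and hence, by (ii), is Noetherian as an $R/J$-module and so also as an $R$-module. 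Using the standard two-out-of-three property for Noetherianity in short exact sequences, an induction on $i$ shows that each $R/J^i$ is Noetherian; taking $i = n$ gives Noetherianity of $R$.

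The main obstacle is stage (i), since this is where DCC is genuinely used, via a double appeal to minimality together with the characterization of $J$ as those $y$ for which $1 - y$ is always a unit. The general (non-commutative) case requires only cosmetic changes: one works with left ideals throughout and replaces CRT in (ii) by the Artin--Wedderburn classification of semisimple left Artinian rings, but the skeleton of the argument is unchanged.
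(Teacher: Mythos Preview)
Your argument is correct (at least in the commutative case, which is all the paper actually needs), and is essentially the standard textbook proof of Hopkins--Levitzki. There is, however, nothing to compare against: the paper does not give its own proof of this lemma but simply cites Theorem~4.15 of Lam's \emph{A First Course in Noncommutative Rings}. Your write-up is in fact a reasonable paraphrase of the proof one finds there (nilpotence of the radical via a minimality argument, semisimplicity of $R/J$, and the finite $J$-adic filtration).

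One small remark on your ``cosmetic changes'' for the noncommutative case: as written, your minimality step uses that $xJ^n \subseteq (x)$, which in the left-Artinian setting would require the left ideal $Rx$ to absorb right multiplication by $J^n$. The clean fix is to run the argument with multiplication on the other side: choose a \emph{left} ideal $I$ minimal with $J^n I \ne 0$, pick $a\in I$ with $J^n a \ne 0$, get $I = Ra$, then observe $J^n(J^n a) = J^n a \ne 0$ and $J^n a \subseteq Ra$, so $J^n a = Ra$ and $a = ya$ for some $y\in J$. This is indeed a minor reorientation, but it is not quite as symmetric as ``cosmetic'' suggests.
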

\begin{proof}
See for instance Theorem 4.15 of \cite{FC}.
\end{proof}

\begin{Lemma} \label{Ingred1}  The class of Artinian principal ideal rings is closed under finite direct products.
\end{Lemma}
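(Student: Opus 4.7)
The plan is to reduce to the binary case by induction on the number of factors, and then prove that if $R$ and $S$ are both commutative Artinian principal ideal rings, so is $R\times S$. The key preliminary observation, which drives everything, is that ideals of $R\times S$ are precisely products $I\times J$ with $I$ an ideal of $R$ and $J$ an ideal of $S$. To see this, given an ideal $K\subseteq R\times S$, one sets $I=\pi_R(K)$ and $J=\pi_S(K)$ and checks that if $(a,b)\in I\times J$, then $(a,0)$ and $(0,b)$ both lie in $K$ (using the idempotents $(1,0)$ and $(0,1)$ to project elements of $K$), so $(a,b)\in K$.

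Granted this description of ideals, both properties transfer immediately. For the Artinian property, any descending chain of ideals $I_n\times J_n$ in $R\times S$ yields descending chains $(I_n)$ and $(J_n)$ in $R$ and $S$ respectively, each of which stabilizes by hypothesis, hence so does the original chain. For the principal ideal property, if $I=(a)$ in $R$ and $J=(b)$ in $S$, then the single element $(a,b)\in R\times S$ generates $I\times J$, because $(a,b)\cdot(r,s)=(ar,bs)$ sweeps out precisely $aR\times bS=I\times J$ as $(r,s)$ ranges over $R\times S$.

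Since both steps are essentially mechanical consequences of the structural description of ideals in a direct product, there is no real obstacle; the only thing one must be careful about is verifying that the structural description actually holds (that is, that every ideal of $R\times S$ genuinely splits as a product), which uses the orthogonal idempotents $(1,0),(0,1)$ and hence requires the rings to have units --- a hypothesis in force throughout the paper. Induction on the number of factors then gives the statement for any finite direct product.
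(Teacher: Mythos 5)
Your proof is correct and takes essentially the same approach as the paper's: both rest on the observation that ideals of a finite direct product are exactly products of ideals of the factors, from which the Artinian and principal properties transfer immediately. You spell out the idempotent argument and reduce to the binary case by induction, whereas the paper works with $n$ factors directly and leaves the ideal-splitting fact unproved, but these are cosmetic differences.
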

\begin{proof}  Suppose $R= \prod_{i < n} R_i$ where each $R_i$ is Artinian and principal. Note that each ideal $J$ of $R$ can be written as $\prod_{i < n} J_i$ where $J_i$ is an ideal of $R_i$. Since each $J_i$ is principal, so must $J$ be; and since the finite product of well-founded linear orders is still well-founded, $R$ is Artinian. \qed
\end{proof}

\begin{Lemma} \label{Ingred2}
Suppose $R$ is a commutative ring with locally nilpotent Jacobson radical (i.e. the Jacobson radical is equal to the nilradical). Suppose further $R$ has only finitely many maximal ideals. Then $R$ is a finite direct product of local rings.
\end{Lemma}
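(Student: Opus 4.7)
The plan is to lift a direct product decomposition of $R/J$ up to $R$ using that the Jacobson radical $J$ is nil. First, list the finitely many maximal ideals as $\m_1, \ldots, \m_n$; by hypothesis $J = \m_1 \cap \cdots \cap \m_n$ is locally nilpotent. Since distinct maximal ideals in a commutative ring are pairwise comaximal, the Chinese Remainder Theorem (Lemma~\ref{CRT}) yields a ring isomorphism $R/J \cong \prod_{i=1}^n R/\m_i$. In particular, $R/J$ carries orthogonal idempotents $\bar{e}_1, \ldots, \bar{e}_n$ summing to $1$, with $\bar{e}_i$ the identity of the $i$-th factor.

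The next step, which is the one real technical point, is to lift these to orthogonal idempotents $e_1, \ldots, e_n \in R$ summing to $1$. For a single lift, pick any preimage $a \in R$ of $\bar e_1$; then $a - a^2 \in J$ is nilpotent, say $(a-a^2)^k = 0$, and a standard polynomial in $a$ (e.g.\ a truncation of the binomial expansion of $(a + (1-a))^{2k-1}$ into the terms with $a$-degree $\geq k$) produces an idempotent $e_1 \equiv a \pmod{J}$. To make the lifts orthogonal, once $e_1, \ldots, e_k$ have been produced, set $f = 1 - e_1 - \cdots - e_k$ and work inside the corner ring $fR$; the image of $\bar e_{k+1}$ there is still idempotent, and the image of $J$ is still nil, so the same lifting argument produces $e_{k+1} \in fR$, automatically orthogonal to $e_1, \ldots, e_k$. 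After $n$ steps, $\sum_{i=1}^n e_i \equiv 1 \pmod{J}$ is an idempotent of $R$, and since $1$ is the only idempotent in $1 + J$ (as $J$ is nil), $\sum_i e_i = 1$.

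Having lifted, the usual identification $R \cong \prod_{i=1}^n e_i R$ gives a decomposition of $R$ as a finite direct product of commutative rings, each factor $e_i R$ being a ring with identity $e_i$. It remains to check each $e_i R$ is local, and this is a counting argument: under any finite product decomposition $R \cong \prod_{i=1}^n R_i$, the maximal ideals of $R$ are exactly the ideals of the form $R_1 \times \cdots \times M \times \cdots \times R_n$ with $M$ a maximal ideal of some $R_i$, so the total count is $\sum_i |\mathrm{MaxSpec}(R_i)|$. Since $R$ has exactly $n$ maximal ideals and we have $n$ factors, each $e_i R$ has exactly one maximal ideal and is therefore local, completing the proof.
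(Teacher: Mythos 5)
Your proof is correct, and it takes a genuinely different route from the paper's. The paper disposes of the lemma in four lines by invoking two facts from Lam's textbook: Lemma 21.28 (idempotents lift modulo a nil ideal) and Theorem 23.11 (a semiperfect commutative ring is a finite product of local rings), observing that $R/J(R)$ is a finite product of fields and hence semisimple, so $R$ is by definition semiperfect. Your argument instead unpacks what those citations are doing: you use the Chinese Remainder Theorem to get the orthogonal idempotents of $R/J$ explicitly, lift them one at a time with the truncated-binomial trick $e = \sum_{j\geq k}\binom{2k-1}{j}a^j(1-a)^{2k-1-j}$ (the product of the two halves of $(a+(1-a))^{2k-1}$ has a factor $(a-a^2)^k=0$, so this is idempotent and reduces to $\bar a$ mod $J$), enforce orthogonality by passing to the corner ring $fR$, use that a nil ideal contains no nonzero idempotents to see the lifts sum to $1$, and then finish with a counting argument on maximal ideals in a finite product. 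All the steps check out; in particular the reduction modulo $J$ of each $e_i$ is a unit in $R/\m_i$, so each $e_iR$ is nonzero, which the counting argument needs. The tradeoff is transparent: the paper's version is shorter but leans on the semiperfect machinery, while yours is longer but self-contained and elementary (it only needs CRT, which the paper already records as Lemma~\ref{CRT}). Either is a valid proof; yours would be the natural choice if one wanted the paper to avoid the semiperfect-ring citations.
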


\begin{proof}  Since $J(R)$ is locally nilpotent, by Lemma 21.28 of \cite{FC}, idempotents can be lifted modulo $J(R)$. Since $R$ has finitely many maximal ideals, $R/J(R)$ is a finite product of fields, hence is semisimple. By definition, this implies $R$ is semiperfect. By Theorem 23.11 of \cite{FC}, we get that $R$ is a finite direct product of local rings. \qed
\end{proof}

\begin{Lemma} \label{Ingred3}
Suppose $R$ is a commutative ring such that $R$-modules are not Borel complete. Then the following all hold:

\begin{enumerate}
	\item[a)] Every prime ideal of $R$ is maximal.
	\item[b)] The Jacobson radical of $R$ is equal to its nilradical, and hence is locally nilpotent.
	\item[c)] There is no infinite family of pairwise orthogonal idempotents.
\end{enumerate}

\end{Lemma}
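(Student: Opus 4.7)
The plan is to prove each of (a), (b), (c) by contrapositive: in each case I will deduce Borel completeness of $R$-modules from a purported ``defect'' of $R$, contradicting the standing hypothesis of the lemma. In every case the engine is one of Theorems~\ref{A}, \ref{B}, \ref{C}, composed with the transfer result Lemma~\ref{R/I}.

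For (a), suppose $\pp\subseteq R$ is a prime ideal that is not maximal. Then $R/\pp$ is an integral domain that is not a field, so Remark~\ref{integral} (a special case of Theorem~\ref{A}) makes the theory of $R/\pp$-modules Borel complete; Lemma~\ref{R/I} lifts this to $R$-modules. Part (b) is then a purely formal consequence of (a): once every prime ideal is maximal, the intersection of the primes and the intersection of the maximals coincide, so the nilradical of $R$ equals the Jacobson radical. In a commutative ring the nilradical is by definition the set of nilpotent elements, so the Jacobson radical is then automatically locally nilpotent.

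For (c), suppose $\{e_i : i<\omega\}$ is an infinite family of pairwise orthogonal, nonzero idempotents. Set $J:=\mbox{Ann}_R(\{e_i:i<\omega\})$ and pass to $\bar R:=R/J$. The key calculation is that $re_i\in J$ forces $re_i=re_i\cdot e_i=0$ (apply the annihilation condition at the index $j=i$ and use $e_i^2=e_i$), which has two consequences. First, each $\bar e_i$ remains a nonzero idempotent in $\bar R$ and the family $\{\bar e_i:i<\omega\}$ has trivial joint annihilator there. Second, writing $I_n:=\bigcap_{i<n}\mbox{Ann}_R(e_i)=\mbox{Ann}_R\bigl(\sum_{i<n} e_i\bigr)$, the same calculation identifies $\mbox{Ann}_{\bar R}(\{\bar e_i:i<n\})$ with $I_n/J$. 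Hence $(I_n/J)_{n<\omega}$ is a descending chain of annihilator ideals of $\bar R$, each nonzero (since $\bar e_n\in I_n/J\setminus\{0\}$), whose intersection is $J/J=0$. Theorem~\ref{C} therefore makes $\bar R$-modules Borel complete, and Lemma~\ref{R/I} transfers this to $R$-modules, the desired contradiction.

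The only step requiring any real care is (c): the naive chain $(I_n)_{n<\omega}$ inside $R$ itself has intersection $J$, not $0$, so Theorem~\ref{C} does not apply directly and one must first quotient by $J$ and verify that the chain survives as a chain of annihilator ideals in $\bar R$ with zero intersection. Parts (a) and (b) are essentially immediate from the earlier results.
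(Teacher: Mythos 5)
Your proof is correct and follows the same strategy as the paper: Theorem~\ref{A} (via Remark~\ref{integral}) plus Lemma~\ref{R/I} for part (a), the observation that (a) forces the nilradical to equal the Jacobson radical for part (b), and the construction of a descending chain of annihilator ideals after quotienting by the joint annihilator of the idempotents for part (c), feeding into Theorem~\ref{C}. The only difference is presentational: the paper works directly inside the quotient ring after renaming, whereas you track $R$, $J$, and $\bar R$ explicitly and verify that each $I_n/J$ is an annihilator ideal of $\bar R$, which is a bit more bookkeeping but amounts to the same argument.
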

\begin{proof}
		
(a) If $\pp$ were some prime ideal that is not maximal,  then $R/\pp$ would be an integral domain that is not a field.  By Theorem~\ref{A}, the theory of $R/\pp$-modules would be Borel complete (let $r \in R/\pp$ be any nonzero nonunit); but this contradicts Lemma~\ref{R/I}.  

(b) Follows immediately from (a).

(c) We prove the contrapositive. Let $\{e_n: n < \omega\}$ be the family of pairwise orthogonal idempotents. Let $I = \mbox{Ann}_R(e_n: n < \omega)$. Since each $e_n e_n \not= 0$, we have each $e_n \not \in I$, so we can mod out by $I$ without disturbing the hypotheses. We have then arranged that for all $r \in R$ there is some $n < \omega$ with $r e_n \not= 0$. For each $n < \omega$ let $I_n = \mbox{Ann}_R(e_m: m < n)$. Note that $e_{n+1} \in I_n \backslash I_{n+1}$, so this is a strictly descending chain of annihilator ideals with empty intersection. By Theorem~\ref{C}, $R$-modules are Borel complete.
 \qed
\end{proof}



\subsection{Infinitely Many Maximal Ideals}

In this subsection we prove the following theorem.
\begin{Theorem} \label{manymaximal}   Suppose $R$ is a countable, commutative ring that has infinitely many maximal ideals. Then the theory of $R$-modules is Borel complete.
\end{Theorem}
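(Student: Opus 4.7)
The plan is to prove the contrapositive: assume $R$ is countable, commutative, and the theory of $R$-modules is not Borel complete, and show that $R$ has only finitely many maximal ideals. The essential toolkit is Lemma~\ref{Ingred3}: every prime ideal of $R$ is maximal, the Jacobson radical $J(R)$ coincides with the nilradical and is locally nilpotent, and $R$ admits no infinite family of pairwise orthogonal nonzero idempotents. The strategy is to derive a contradiction from the assumption of infinitely many maximal ideals by manufacturing such a forbidden orthogonal family.

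The first step is to pass to $\overline{R} := R/J(R)$. Because $J(R)$ is contained in every maximal ideal of $R$, the maximal ideals of $R$ and of $\overline{R}$ correspond bijectively, so $\overline{R}$ also has infinitely many maximal ideals. The ring $\overline{R}$ is commutative, reduced (we have killed the nilradical), and of Krull dimension zero (every prime of $\overline{R}$ is maximal), hence by a standard commutative-algebra fact $\overline{R}$ is von Neumann regular: every $\overline{a} \in \overline{R}$ satisfies $(\overline{a}) = (\overline{e})$ for some idempotent $\overline{e}$.

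Next I would recursively build pairwise orthogonal nonzero idempotents $\{f_n : n < \omega\}$ in $\overline{R}$. Maintain an idempotent $g_n \in \overline{R}$, starting from $g_0 = 1$, such that the summand ring $g_n \overline{R}$ is vNR with infinitely many maximal ideals. Since $g_n\overline{R}$ has more than one maximal ideal it is not a field, so choosing $\overline{a}$ in one maximal ideal but not another and replacing $\overline{a}$ by the idempotent generating $(\overline{a})$ produces a nontrivial idempotent $e \in g_n \overline{R}$. This gives a ring decomposition $g_n \overline{R} \cong e\overline{R} \times (g_n - e)\overline{R}$; since the maximal ideals of a finite product are exactly those of the factors, at least one summand still has infinitely many maximal ideals. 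Let $g_{n+1}$ be that summand's unit and $f_n$ the other summand's unit, both nonzero by nontriviality. A short calculation using $g_n g_m = g_{\max(n,m)}$ and $f_n \in g_n \overline{R}$ shows the $f_n$ are pairwise orthogonal.

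Finally, I would lift $\{f_n\}$ from $\overline{R}$ to $R$. Because $J(R)$ is nil, a standard lifting result (cf.\ Chapter~21 of \cite{FC}) promotes any orthogonal system of idempotents modulo a nil ideal to an orthogonal system in the larger ring. This produces an infinite pairwise orthogonal family of idempotents in $R$, contradicting Lemma~\ref{Ingred3}(c). The main obstacle is the central claim that a commutative vNR ring with infinitely many maximal ideals contains infinitely many orthogonal idempotents; the recursive splitting handles this, the key point being that at each stage a nontrivial ring decomposition $S \cong S_1 \times S_2$ partitions $\mathrm{Spec}(S)$ as $\mathrm{Spec}(S_1) \sqcup \mathrm{Spec}(S_2)$, so the "infinitely many maximal ideals" property must persist in one of the summands and the construction never terminates.
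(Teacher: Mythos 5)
Your proof is correct, but it takes a genuinely different route from the paper's. Both arguments rest on the same toolkit (Lemma~\ref{Ingred3}), but they diverge sharply after that. The paper picks, via a nonprincipal ultrafilter, a sequence $(\mathfrak{m}_n)$ of maximal ideals with the property that every $r\in R$ lies in either finitely or cofinitely many of the $\mathfrak{m}_n$; after modding out by $\bigcap_n\mathfrak{m}_n$ it views $R\subseteq\prod_n R/\mathfrak{m}_n$, introduces the prime ideal $\mathfrak{p}$ of finite-support elements and the coordinate idempotents $e_n$, and finishes by a case split: if infinitely many $e_n$ land in $R$ we contradict~\ref{Ingred3}(c), and otherwise $\mathfrak{p}$ sits inside infinitely many $\mathfrak{m}_n$, hence is a non-maximal prime, contradicting~\ref{Ingred3}(a). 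Your approach is more structural: pass to $\overline R=R/J(R)$, which by~\ref{Ingred3}(a)(b) is reduced of Krull dimension zero and hence von~Neumann regular, recursively split off nontrivial idempotents (always keeping a summand with infinitely many maximal ideals), and then lift the resulting infinite orthogonal family back along the nil ideal $J(R)$ (Lam, \cite{FC}, Lemma 21.28 plus Proposition 21.25) to contradict~\ref{Ingred3}(c) directly in $R$. What the paper's argument buys is that it stays very close to the ring $R$ itself and uses essentially no structure theory beyond the Chinese Remainder Theorem; what yours buys is that a single contradiction (infinitely many orthogonal idempotents) suffices, at the cost of invoking the vNR characterization and the countable-orthogonal-idempotent lifting theorem. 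One small simplification you could make: rather than lifting the idempotents, you could apply~\ref{Ingred3}(c) to $\overline R$ itself and then invoke Lemma~\ref{R/I} to transfer Borel completeness from $\overline R$-modules back to $R$-modules, exactly as the paper does when it quotients by $\bigcap_n\mathfrak{m}_n$; this avoids the lifting lemma entirely.
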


\begin{proof}  Fix such an $R$.

\medskip
\noindent{\bf Claim 1.}  There is a sequence $(\m_n:n < \omega)$ of distinct maximal ideals of $R$ such that for all $r \in R$, $\{n < \omega: r \in \m_n\}$ is either finite or cofinite.

\begin{proof}
Take $X$ to be a countably infinite set of maximal ideals of $R$ and let $\mathcal{U}$ be a nonprincipal ultrafilter on $X$. Enumerate $R = \{r_n: n < \omega\}$ and for each $n < \omega$, let $X_n$ be either $\{\m \in X: r_n \in \m\}$ or its complement, whichever is in $\mathcal{U}$. Inductively pick $\m_n$ so that $\m_n \in \left(\bigcap_{m \leq n} X_n\right) \backslash \{\m_m: m < n\}$. This easily works.  \qed
\end{proof}
	
	Fix such a sequence $(\m_n: n < \omega)$ for the rest of the proof. Note that we can mod out by $\bigcap_n \m_n$ without disturbing the hypotheses or conclusion (using Lemma~\ref{R/I}); i.e. we can arrange $\bigcap_n \m_n = 0$. It follows that the natural homomorphism $R \to \prod_n R/\m_n$ is injective, so we can view $R$ as a subring of $\prod_n R/\m_n$. Rephrased, for $r \in R$ and $n < \omega$, put $r(n) := r + \m_n \in R/\m_n$. 
	
	As notation, for each $r \in R$, put $\mbox{supp}(r) = \{n: r(n) \not= 0\}$. By arrangement we have that for all $r \in R$, $\mbox{supp}(r)$ is either finite or cofinite. Let $\pp$ be the set of all $r \in R$ whose support is finite; this is easily seen to be a prime ideal.
	
	
	For each $n$, let $e_n \in \prod_n R/\m_n$ be the idempotent defined via $e_n(n) = 1$, and $e_n(m) = 0$ for all $m \not= n$. Then $(e_n: n < \omega)$ is an infinite family of orthogonal idempotents. Let $u \subseteq \omega$ be the set of all $n$ such that $e_n \in R$. 
	
	\medskip
	\noindent
	{\bf Claim 2.}  
	For each $n \not \in u$, we have $\pp \subseteq \mathbf{m}_n$.

	\begin{proof}
		Rephrased, we want to show that if $r \in \pp$, then $\mbox{supp}(r) \subseteq u$. So let $n \in \mbox{supp}(r)$; it suffices to show $e_n \in R$. By the Chinese Remainder Theorem (see Lemma~\ref{CRT}), we can find some $s \in R$ such that $s(n) = r(n)^{-1}$,  and for all $m \in \mbox{supp}(r)$ with $m \not= n$, $s(m) = 0$. Then $rs = e_n$ so $e_n \in R$.  \qed
	\end{proof}
	
	Now if $u$ is infinite then $R$-modules are Borel complete by Lemma~\ref{Ingred3}(c), so we can suppose $u$ is finite. In that case, $(\mathbf{m}_n: n \not \in u)$ is an infinite family of distinct maximal ideals containing $\pp$, so in particular, $\pp$ is not maximal. By Lemma~\ref{Ingred3}(a), $R$-modules are Borel complete in any case.  \qed
\end{proof}

\subsection{The Local Case}

In this subsection we prove the following theorem.

\begin{Theorem}\label{LocalCase}
Suppose $R$ is a local commutative ring. If $R$-modules are not Borel complete, then $R$ is an Artinian principal ideal ring.
\end{Theorem}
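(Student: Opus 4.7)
The plan is to combine Theorems~\ref{A}, \ref{B}, and \ref{C} with Lemma~\ref{Ingred3} to pin down the structure of $R$ step by step, culminating in the conclusion that $R$ is a local Artinian principal ideal ring (that is, a quotient of a discrete valuation ring by a power of its uniformizer). Write $\m$ for the maximal ideal and $k := R/\m$ for the residue field.

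First I would extract basic structural consequences. By Lemma~\ref{Ingred3}(a) and (b), $\m$ is the unique prime of $R$ and every element of $\m$ is nilpotent. By Theorem~\ref{B} (whose centrality hypothesis is automatic in the commutative setting), $R$ is \emph{uniform}: for any nonzero $x, y \in R$, $(x) \cap (y) \neq 0$. Indeed if $(x) \cap (y) = 0$, then Theorem~\ref{B} would force $\mbox{Ann}(x) + \mbox{Ann}(y) = R$, which in a local ring means one of the annihilators contains a unit, contradicting $x \neq 0$ or $y \neq 0$.

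Second, I would use uniformity to bound $\dim_k \m/\m^2 \leq 1$. The socle $S := \mbox{Ann}_R(\m)$ is a $k$-vector space, and for $x \in S$ we have $Rx = kx$ since $\m x = 0$; so two linearly independent elements of $S$ would yield ideals with trivial intersection, contradicting uniformity. Hence $\dim_k S \leq 1$. Now by Lemma~\ref{R/I}, the theory of $R/\m^2$-modules is also not Borel complete; $R/\m^2$ is local with maximal ideal $\m/\m^2$ of square zero, so $\m/\m^2$ equals the socle of $R/\m^2$, and applying the socle bound to $R/\m^2$ gives $\dim_k \m/\m^2 \leq 1$.

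Third, and this is the main obstacle, I would show that $\m$ is nilpotent. Choose $x \in \m$ (possibly $0$) with $\m = Rx + \m^2$; iterating gives $\m^i = Rx^i + \m^{i+1}$ for all $i$. Since $x$ is nilpotent we have $x^N = 0$ for some $N$, so $\m^N = \m^{N+1}$, and the ideal $J := \m^N = \bigcap_n \m^n$ satisfies $\m J = J$. The heart of the proof is to show $J = 0$, which I would do by contradiction using Theorem~\ref{C}. Assuming $J \neq 0$, the idea is that the relation $J = \m J$ combined with uniformity and nil-ness of $\m$ forces a fractional-exponent structure on $J$ reminiscent of a valuation ring such as $k[t^{1/2^\infty}]/(t^N)$: starting from any $0 \neq j \in J$ and repeatedly factoring via $J = \m J$, one produces an ascending chain of principal ideals $(j_1) \subsetneq (j_2) \subsetneq \cdots$ inside $J$, whose annihilators form an infinite strictly descending chain of nonzero annihilator ideals of $R$ with trivial intersection; Theorem~\ref{C} then yields Borel completeness, a contradiction.

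Finally, once $\m^N = 0$, the equation $\m = Rx + \m^N$ collapses to $\m = Rx$, so $\m$ is principal. The descending chain $R \supsetneq (x) \supsetneq (x^2) \supsetneq \cdots \supsetneq (x^N) = 0$ is then a composition series with each quotient of dimension at most $1$ over $k$, so $R$ has finite length and is Artinian. Every ideal of $R$ must appear in this composition series, hence equals $(x^i)$ for some $i$, and in particular is principal. Therefore $R$ is an Artinian principal ideal ring, as desired.
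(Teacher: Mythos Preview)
Your overall strategy parallels the paper's—use Theorem~\ref{B} for structural control on ideals, then Theorem~\ref{C} to rule out the non-Artinian case—but Step~4 has a genuine gap. The assertion that the annihilators $\mbox{Ann}(j_n)$ of your ascending chain have \emph{trivial intersection} is not justified, and in fact can fail. Take the local $k$-algebra $R$ with basis $\{t^q : q \in \mathbb{Q}\cap[0,1]\}$ and multiplication $t^a t^b = t^{a+b}$ if $a+b\le 1$, and $=0$ otherwise. This is a commutative chain ring with $\m$ locally nilpotent and $\m=\m^2$ (so your $J=\m\neq 0$ and $\dim_k \m/\m^2=0$), but its socle is $S=k\,t^1\neq 0$. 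Since $t^1\cdot j=0$ for every $j\in\m$, every $\mbox{Ann}(j_n)$ contains $S$, so $\bigcap_n \mbox{Ann}(j_n)\supseteq S\neq 0$ no matter how the chain is built; Theorem~\ref{C} does not apply to $R$ directly. The paper avoids this by first passing to a quotient with no minimal nonzero ideal: since $R$ is not Artinian one picks a strictly descending chain and mods out by its intersection; in the quotient (here $R/S$) one has $\mbox{Ann}(\m)=0$. One then builds an ascending chain of principal ideals whose union is all of $\m$ (not merely inside $J$), so that $\bigcap_n \mbox{Ann}(r_n)=\mbox{Ann}(\m)=0$ and Theorem~\ref{C} fires.

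A secondary point: your ``repeatedly factoring via $J=\m J$'' tacitly needs the principal ideals to be linearly ordered, in order to collapse $j=\sum_i m_i j'_i$ to a single $j=m\,j'$. Uniformity of $R$ alone does not give this. You recover linear ordering by running your uniformity argument not in $R$ but in $R/((a)\cap(b))$ whenever $(a),(b)$ are incomparable—this is exactly the paper's Claim~1. With that upgrade and the mod-out step above, the rest of your outline goes through and matches the paper's proof.
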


\begin{proof}
 Fix a  local commutative ring $R$ with maximal ideal $\m$, such that $R$-modules are not Borel complete.
Note that $\m$ is locally nilpotent, by Lemma~\ref{Ingred3}(b).
%
%
%

\medskip
\noindent{\bf Claim 1.}
	The ideals of $R$ are linearly ordered under inclusion.

\begin{proof} 
By way of contradiction, suppose there were ideals $I,J$ with $I\not\subseteq J$ and $J\not\subseteq I$. Necessarily, $I, J \subseteq \m$. We can mod out by $I \cap J$ without disturbing the hypotheses (using Lemma~\ref{R/I}); so suppose $I \cap J = 0$.

Choose $r\in I\setminus J$ and $s\in J\setminus I$.  Since the principal ideal $(s)\subseteq J$,
we have $(r)\not\subseteq (s)$ and dually, $(s)\not\subseteq (r)$. By arrangement we have $(r) \cap (s) = 0$.  But note that $\mbox{Ann}(r) \subseteq \m$ and $\mbox{Ann}(s) \subseteq \m$, so by Theorem~\ref{B}, the theory of $R$-modules is Borel complete, contradicting our hypothesis on $R$.  \qed
\end{proof}

\medskip
\noindent{\bf Claim 2.}   The following both hold.
\begin{itemize}
\item[a)] If $J\subsetneq I$ are ideals, then $J\subsetneq (a)\subseteq I$ for every $a\in I\setminus J$: and 
\item[b)]  For any ideal $I$, there is no ideal $J$ with $\m I \subsetneq J \subsetneq I$.
\end{itemize}

\begin{proof} (a)  Choose any $a\in I\setminus J$.  Clearly, $(a)\subseteq I$.  However, since $a\not\in J$, $(a)\not\subseteq J$, so
$J\subseteq (a)$ by linearity.

(b) Note that this is trivial if $\m I = I$, so assume $\m I\subsetneq I$.
 After modding out we can suppose $\m I = 0$, and so $I$ is naturally an $R/\m$-vector space; subspaces of $I$ are the same as subideals. By Claim 1, the subspaces of $I$ are linearly ordered under inclusion. 
This implies that $\mbox{dim}(I) =1$, since if $a$ and $b$ are linearly independent, then $(a)$ and $(b)$ contradict linearity. It follows that $I = (a)$ for every nonzero $a \in I$, so there are no $J$ with $0 \subsetneq J \subsetneq I$, as desired. \qed
\end{proof}

\medskip
\noindent{\bf Claim 3.}  
$\m$ is finitely generated.

\begin{proof}
Suppose otherwise. Then $R$ is not Noetherian, so by Lemma~\ref{ArtinianImpliesNoetherian}, $R$ is not Artinian. Choose a strictly decreasing sequence $(J_n:n\in\omega)$ of ideals
and put $J:=\bigcap_{n\in\omega} J_n$. Note that we can mod out by $J$ without disturbing the hypotheses (using Lemma~\ref{R/I}); i.e. we can arrange $R$ has no minimal nonzero ideal. Note that this implies $\mbox{Ann}_R(\m) = 0$, by Claim 2b).

Since $\m$ is countable but not finitely generated,  repeatedly using Claim 2a), we get a strictly increasing sequence $((r_n):n\in\omega)$ of principal ideals contained in $\m$
with $\m=\bigcup_{n\in\omega} (r_n)$.  For each $n$, put $I_n:=\mbox{Ann}_R(r_n)$.  Clearly, $I_{n+1}\subseteq I_n$ for each $n$.  Now each $r_n$ is nilpotent (and we may assume non-zero); let $k \geq 2$ be least with $r_n^k = 0$. Then $r_n^{k-1}\in \mbox{Ann}_R(r_n)$, so $I_n\neq 0$. Also, $\bigcap_{n\in\omega} I_n = \mbox{Ann}_R(\m) = 0$, as remarked above. Thus, we have a descending sequence of annihilator ideals with zero intersection, so the theory of $R$-modules is Borel complete by Theorem~\ref{C}, contrary to hypothesis. \qed
\end{proof}

\medskip
\noindent{\bf Claim 4.} 
$\m$ is principal.
\begin{proof}
By the preceding claim, we can write $\m = (a_i: i < n)$ for some $a_i \in \m$. By linearity, we can suppose $(a_i) \subseteq (a_j)$ for $i \leq j$. But then $\m = (a_{n-1})$. \qed
\end{proof}

We can now finish. Write $\m = (x)$. Let $k$ be least with $x^k = 0$; so $k \geq 1$. Write $x^0 = 1$. Then $\langle (x^i):i  \leq k\rangle$ is a strictly descending sequence of principal ideals. Moreover, note that each $(x^{i+1}) = \m (x^i)$. By Claim 2b), $\langle (x^i): i \leq k \rangle$ enumerates all of the ideals of $R$, and so $R$ an Artinian principal ideal ring.
\qed
\end{proof}

\subsection{Putting it all together}
We now Prove Theorem~\ref{big}. So suppose $R$ is a commutative ring, such that the theory of $R$-modules is not Borel complete. We aim to show $R$ is an Artinian principal ideal ring.

By Theorem~\ref{manymaximal}, $R$ has only finitely many maximal ideals, and by Lemma~\ref{Ingred3}(b), the Jacobson radical of $R$ is locally nilpotent. By Lemma~\ref{Ingred2}, $R$ is a finite product of local rings $R = \prod_{n < n_*} R_n$. By Lemma~\ref{R/I}, for each $n$, the theory of $R_n$-modules is not Borel complete. By Theorem~\ref{LocalCase}, each $R_n$ is an Artinian principal ideal ring. By Lemma~\ref{Ingred1}, $R$ is an Artinian principal ideal ring.
%
%
%

\appendix

\section{Appendix:  Constructing limit objects with rich automorphism groups}


In this appendix, we describe suitable families of classes $\K$ of algebraic objects that have a limit structure with a rich automorphism group.
Our method is an extension of the notion of {\em merging} that appears in \cite{Herrings}.

\begin{Definition}\label{GrowingDef}  {\em  Let $\L$ be a countable language with a distinguished unary predicate $X$.  
A class $\K$ of $\L$-structures  is {\em $X$-growing} if every $A\in\K$ has an extension $B\supseteq A$ in $\K$ with $X^B$ properly extending
		$X^A$. 
		
	A class 	$\K$ of countable $\L$-structures  is {\em suitable} if $\K$ is closed under isomorphism, has disjoint amalgamation, is $X$-growing, and there is some $A \in \K$ with $X^A=\emptyset$.

		An $L$-structure $M$ is a {\em $\K$-limit} if $M=\bigcup\{A_n:n\in\omega\}$ with each $A_n\in\K$ and $A_n\subseteq A_m$ whenever $n\le m$.
	}
\end{Definition}

Our main theorem is the following:

\begin{Theorem}\label{GeneralSuitable}
	Suppose a class $\K$ is suitable.  
	Then there is some $\mathbf{K}$-limit $M$ admitting an equivalence relation $E$ on $X^M$ with infinitely many classes, such that each $h \in \mbox{Sym}(X^M/E)$ lifts to an automorphism of $M$.
\end{Theorem}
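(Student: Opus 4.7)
The plan is to realize $M$ as the union of a chain $A_0 \subseteq A_1 \subseteq \cdots$ of members of $\K$, assigning each new element of $X$ to an $E$-class at the moment it enters the chain, and maintaining at each finite stage the invariant that $E\myrestriction X^{A_n}$ has only finitely many classes. A stagewise bookkeeping schedules three families of tasks:
\begin{itemize}
\item[(i)] \emph{Amalgamation tasks:} for each pair $B_0,B_1\in\K$ with $A_n\subseteq B_i$ and $B_0\cap B_1 = A_n$, embed the disjoint amalgam of $B_0,B_1$ over $A_n$ in some later $A_m$.
\item[(ii)] \emph{Growth tasks:} infinitely often, invoke the $X$-growing hypothesis to extend $A_n$ by an element of $X$ declared to form a fresh $E$-class, guaranteeing $X^M/E$ is infinite.
\item[(iii)] \emph{Symmetry tasks:} for each finite partial $\L$-isomorphism $f\colon B\to B'$ between substructures of $A_n$ respecting the relevant $E$-class labels, arrange that $f$ extends to an automorphism of some $A_{m'}\supseteq A_n$ realizing the induced permutation of $E$-classes.
\end{itemize}

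The engine for (iii) is a symmetric amalgamation trick, in the spirit of the merging of~\cite{Herrings}. Given $f\colon B\to B'$, pick an $\L$-isomorphic copy $A_n^\dagger$ of $A_n$ together with an isomorphism $\phi\colon A_n\to A_n^\dagger$ extending $f$, and form the disjoint amalgam of $A_n$ with $A_n^\dagger$ over $B\cup B'$, identifying $\phi(B)\subseteq A_n^\dagger$ with $B'\subseteq A_n$. Because $\K$ has disjoint amalgamation and is closed under isomorphism, the result lies in $\K$; it contains, internally, an extension of $f$ to a partial isomorphism $\iota\colon A_n\to A_n^\dagger$. Iterating to close under composition (one generator of the finite symmetric group at a time) produces $A_{m'}\supseteq A_n$ carrying a genuine $\L$-automorphism realizing the prescribed permutation.

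After the chain is complete, let $M=\bigcup_n A_n$ and $E=\bigcup_n E_n$. By (ii), $X^M/E$ is countably infinite. To verify the lifting property, fix $h\in\mbox{Sym}(X^M/E)$, enumerate $M=\{m_k:k<\omega\}$, and construct $\sigma\in\mathrm{Aut}(M)$ as the union of finite partial $\L$-isomorphisms $\sigma_0\subseteq\sigma_1\subseteq\cdots$, each satisfying $\sigma_k(x)/E = h(x/E)$ for every $x\in\dom(\sigma_k)\cap X^M$. At stage $k$, to bring $m_k$ into the domain (or by symmetry the range), find an $A_n$ containing $\dom(\sigma_k)$, $\range(\sigma_k)$, and $m_k$, and apply task (iii) to the finite restriction of $h$ that is relevant.

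The main obstacle is verifying (iii) and, more subtly, arranging the bookkeeping so that commitments already made to $h$ at earlier stages of the back-and-forth do not conflict with the $E$-classes introduced later. The first point reduces to disjoint amalgamation plus closure under isomorphism; the second is handled by task (ii), which keeps a reservoir of fresh $E$-classes available for any new element of $X$ brought into play. The overall structure follows the standard Fraïssé-style priority argument, with the merging trick providing the extra symmetry needed beyond ultrahomogeneity.
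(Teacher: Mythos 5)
Your overall architecture --- build a chain $(A_n)$ scheduling amalgamation, growth, and symmetry tasks, then run a back-and-forth against $h$ --- matches the skeleton of the paper's proof. But symmetry task (iii) as you state it is too strong, and the engine you propose for it does not deliver.

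Task (iii) asks that each finite partial $\L$-isomorphism $f$ inside $A_n$ (compatible with the $E$-class data) \emph{extend to an $\L$-automorphism of some later $A_{m'}\in\K$}. Disjoint amalgamation does not give this, and the merging move you sketch does not either: what it produces is a \emph{partial} isomorphism $\iota\colon A_n\to A_n^{\dagger}$ between two isomorphic substructures of the amalgam, not an automorphism of the amalgam; $\iota$ moves $A_n$ off itself. The suggestion to ``iterate to close under composition'' does not close at a finite stage: extending $\iota$ again moves $A_n^{\dagger}$ to a fresh copy, the prospective automorphism has infinite orbits, and the union of the resulting chain of copies is a $\K$-limit rather than a member of $\K$. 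Nor does ``one generator of the finite symmetric group at a time'' help: a transposition of $E$-classes is realized only as an isomorphism onto a disjoint copy, and there is no single structure in which two such maps compose to an automorphism. Extending a given partial isomorphism to an automorphism of a finitely generated (or finite) structure is exactly the content of extension properties for partial automorphisms (EPPA, Hrushovski-type theorems), and that does not follow from disjoint amalgamation.

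The paper's proof deliberately asks for much less at each finite stage. It works in the class $\K_*$ of expansions $(A,E^A)$ and introduces the notion of $C$ being ``$B$-big'': $X^C/E^C\supsetneq X^B/E^B$, and for every $A\subseteq B$ in $\K_*$, every embedding $f\colon A\to B$, and every $h\in\mbox{Sym}(X^B/E)$ extending $f/E$, there is an \emph{embedding} $g\colon B\to C$ extending $f$ with $g/E=h$. One round of merging produces such a $g$ (this is Lemma~\ref{one}), and because $X^B$ is finite, only finitely many such constraints arise, so $C$ can be reached in finitely many amalgamation steps inside $\K_*$. No automorphism of any finite stage is ever claimed. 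The back-and-forth system $\mathcal{F}_h$ then consists of isomorphisms between finitely generated substructures of $M$ whose induced $E$-class maps are restrictions of $h$; forth follows from $B$-bigness, and the union of a maximal chain in $\mathcal{F}_h$ is the desired automorphism of $M$. If you replace ``extends to an automorphism of $A_{m'}$'' in task (iii) by ``extends to an embedding $A_n\to A_{m'}$ realizing the prescribed $E$-class permutation,'' your outline becomes essentially the paper's.

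Two smaller points. Your back-and-forth moves with finite partial $\L$-isomorphisms, which need not extend to isomorphisms of generated substructures; the paper's $\mathcal{F}_h$ uses isomorphisms between $\K_*$-members for exactly this reason, and the strongly suitable hypothesis that $X^A$ generates $A$ is what makes this manageable. Finally, the theorem is stated for \emph{suitable} classes, where $X^A$ may be infinite and need not generate $A$; the paper first treats the strongly suitable case and then reduces the general case to it by passing to a two-sorted language with a finite generating sort $V$ mapping onto $X$. Your proposal tacitly works in the strongly suitable setting and does not address that reduction.
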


The following simple corollary might also be useful in some settings.

\begin{Corollary}
	Suppose $\mathbf{K}$ is a class of countable structures, is closed under isomorphism, has disjoint amalgamation and is $X$-growing.
	Then there is some $\mathbf{K}$-limit $M$ admitting some $Y \subseteq X^M$ and some equivalence relation $E$ on $Y$ with infinitely many classes, such that each $h \in \mbox{Sym}(Y/E)$ lifts to an automorphism of $M$.
\end{Corollary}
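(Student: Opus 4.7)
The Corollary differs from the hypotheses of Theorem~\ref{GeneralSuitable} only in that $\K$ need not contain a structure with $X^A = \emptyset$. The plan is to enlarge the language by a fresh unary predicate $Y$ that will play the role of $X$ in the theorem, while allowing us to start the construction with $Y$ empty.

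Let $\L^+ := \L \cup \{Y\}$, and let $\K^+$ denote the class of countable $\L^+$-structures $B^+$ such that $B^+ \upharpoonright \L \in \K$ and $Y^{B^+} \subseteq X^{B^+}$. Regard $\K^+$ as an $\L^+$-class with distinguished predicate $Y$ (in place of $X$); we verify that $\K^+$ is suitable in the sense of Definition~\ref{GrowingDef}. Closure under isomorphism is immediate. Taking any $A \in \K$ (which exists, since $\K$ is $X$-growing) and setting $Y^{A^+} := \emptyset$ produces a witness with empty distinguished predicate. The $Y$-growing condition reduces to $X$-growing in $\K$: given $B^+ \in \K^+$, extend $B^+ \upharpoonright \L$ inside $\K$ to some $C$ with a new $x \in X^C \setminus X^{B^+}$, then set $Y^{C^+} := Y^{B^+} \cup \{x\}$.

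The one nontrivial clause is disjoint amalgamation in $\K^+$. Given $A^+$ a common $\L^+$-substructure of $B^+_0, B^+_1$ with underlying intersection $A^+$, amalgamate the $\L$-reducts disjointly inside $\K$ to obtain $C \in \K$, and set $Y^{C^+} := Y^{B^+_0} \cup Y^{B^+_1}$. The point is to verify that each $B^+_i$ remains an $\L^+$-substructure of $C^+$, i.e.\ that $Y^{C^+} \cap B^+_i = Y^{B^+_i}$. The $\supseteq$ inclusion is trivial; for $\subseteq$, use $Y^{B^+_{1-i}} \cap B^+_i \subseteq B^+_{1-i} \cap B^+_i = A^+$ together with the $\L^+$-substructure hypothesis $Y^{A^+} = Y^{B^+_i} \cap A^+$, so $Y^{B^+_{1-i}} \cap B^+_i \subseteq Y^{A^+} \subseteq Y^{B^+_i}$.

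With $\K^+$ shown suitable, apply Theorem~\ref{GeneralSuitable} to it: this produces a $\K^+$-limit $M^+$ together with an equivalence relation $E$ on $Y^{M^+}$ having infinitely many classes, such that every $h \in \mbox{Sym}(Y^{M^+}/E)$ lifts to an $\L^+$-automorphism of $M^+$. Put $M := M^+ \upharpoonright \L$ and $Y := Y^{M^+} \subseteq X^M$. Then $M$ is a $\K$-limit via the $\L$-reducts of the chain presenting $M^+$, and each lifted $\L^+$-automorphism of $M^+$ restricts to an $\L$-automorphism of $M$ that preserves $Y$ setwise, hence respects $E$. The only step requiring genuine care is the substructure check inside the amalgamation argument; every other clause is essentially formal.
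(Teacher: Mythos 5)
Your proof is correct and uses exactly the same idea as the paper: the paper's one-line proof says to add a new unary predicate $Y$ interpreted as a subset of $X$, and then apply Theorem~\ref{GeneralSuitable} with $Y$ playing the role of the distinguished predicate. You have filled in the routine verification that this expanded class is suitable (in particular, disjoint amalgamation, where the small typo $Y^{A^+} = Y^{B^+_i} \cap A^+$ should read $Y^{A^+} = Y^{B^+_{1-i}} \cap A^+$, does not affect correctness), but the structure of the argument is identical.
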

\begin{proof}
	Let $\mathbf{K'}$ be obtained from $\mathbf{K}$ by adding a new unary predicate $Y$ and insisting only that it is interpreted as a subset of $X$; apply the preceding theorem.  \qed
\end{proof}

\subsection{Extending and amalgamating equivalence relations}

We collect some simple remarks.
It is obvious that given any equivalence relation $(I,E^I)$ and any set $J\supsetneq I$, there are (many) equivalence relations $E^J$ on $J$ 
so that $(J,E^J)\supseteq (I,E^I)$ (note that this is a stronger statement than  $E^J\supseteq E^I$). 


Now suppose $(I,E^I)$ is a substructure of both $(J,E^J)$ and $(K,E^K)$ with $J\cap K=I$ and we investigate equivalence relations $E^*$ on $J\cup K$ extending
$E^J\cup E^K$.  Transitivity requires $E^*(x,y)$ to hold whenever $x/E^J$ and $y/E^K$ extend the same $E^I$-class. 
Recognizing this, call a bijection $\ell:J/E^J\rightarrow K/E^K$ {\em permissible} if for all $x \in I$ we have $\ell(x/E^J) = x/E^K$.

\begin{Lemma}  \label{permit}  Suppose $(I,E^I)$ is a substructure of both $(J,E^J)$ and $(K,E^K)$ with $J\cap K=I$ and $\ell:J/E^J\rightarrow K/E^K$ is a permissible bijection.
Then there is a unique equivalence relation $E_\ell$ on $J\cup K$ extending $E^J\cup E^K$ satisfying $E_\ell(x,y)$ whenever $\ell(x/E^J)=y/E^K$.
\end{Lemma}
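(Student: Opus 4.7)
The plan is to define $E_\ell$ explicitly and then check it satisfies all four requirements. For $x,y \in J \cup K$, declare $E_\ell(x,y)$ to hold iff one of the following holds: (i) $x,y\in J$ and $E^J(x,y)$; (ii) $x,y\in K$ and $E^K(x,y)$; or (iii) one of $x,y$ lies in $J$ and the other in $K$, and (writing $x\in J$, $y\in K$ after a possible swap) $\ell(x/E^J)=y/E^K$. When $x,y \in I = J\cap K$, several clauses may apply, but permissibility of $\ell$ makes them consistent: for $x\in I$, $\ell(x/E^J)=x/E^K$, so clause (iii) reduces to $E^K(x,y)$, which in turn agrees with (i) and (ii) since $(I,E^I)$ is a substructure of both $(J,E^J)$ and $(K,E^K)$.

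Reflexivity and symmetry of $E_\ell$ are immediate from the definition. For transitivity, given $E_\ell(x,y)$ and $E_\ell(y,z)$, I would case-split on where the middle element $y$ sits among $J\setminus I$, $K\setminus I$, and $I$. The only nontrivial situation is when $x$ and $z$ lie on opposite sides (say $x\in J$, $z\in K$); then whichever of the three locations $y$ occupies, one short chain through $\ell$, together with the fact that $\ell$ respects the common part $I$, delivers $\ell(x/E^J)=z/E^K$. The extension of $E^J\cup E^K$ and the $\ell$-compatibility then hold by construction.

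For uniqueness, any equivalence relation $E^*$ on $J\cup K$ that extends $E^J\cup E^K$ and satisfies $E^*(x,y)$ whenever $\ell(x/E^J)=y/E^K$ must contain every pair listed in (i)--(iii), so $E^*\supseteq E_\ell$. Conversely, closing $E^J\cup E^K$ together with the $\ell$-condition under transitivity produces no pairs outside of (i)--(iii): at most one transitivity step through $I$ is ever needed, since permissibility forces the $I$-classes to match on both sides. Hence $E^*\subseteq E_\ell$ as well, and the two agree. The only mild obstacle is the bookkeeping of the transitivity case split; the single nontrivial ingredient is permissibility of $\ell$, which is exactly what closes the "mixed" chains.
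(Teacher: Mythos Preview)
Your proposal is correct and follows the same approach as the paper: define $E_\ell$ explicitly as the union of $E^J$, $E^K$, and the cross pairs dictated by $\ell$, then verify it is an equivalence relation with the required properties. The paper's proof is far terser---it simply writes down $E_\ell:=E^J\cup E^K\cup\{(x,y)\in J\times K: \ell(x/E^J)=y/E^K\}$ and leaves all the checks (well-definedness on $I$, reflexivity, symmetry, transitivity, uniqueness) to the reader---whereas you have sketched those verifications, but the substance is identical.
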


\begin{proof}  Put $E_\ell:=E^J\cup E^K\cup\{(x,y)\in J\times K: \ell(x/E^J)=y/E^K\}$.
\qed
\end{proof}

\begin{Definition}  {\em  Given an equivalence relation $(J,E^J)$, a partial function $f:J\rightarrow J$ is {\em $E^J$-preserving} if, for all $x,y\in \mbox{dom}(f)$, $E^J(x,y)$ iff $E^J(f(x),f(y))$.
Such an $f$ induces a partial map $f/E^J:J/E^J\rightarrow J/E^J$. When we have $(J, E^J) \subseteq (K, E^K)$ then we like to identify $f/E^J$ with $f/E^K$; we write $f/E$ for both.
}
\end{Definition}

\subsection{General construction of structures with infinitary indiscernibles}

In this section, we prove Theorem~\ref{GeneralSuitable} in the special case when $\mathbf{K}$ is {\em strongly suitable}  (see below).   Actually, in our applications, this special case would be enough.

\begin{Definition}
{\em
Suppose $\L$ is a countable language with a distinguished unary predicate $X$.  (We allow multi-sorted languages, and we allow sorts to be empty.) 
For an $\L$-structure $A$ and $I\subseteq A$, let $\mbox{cl}^A(I)$ denote the smallest substructure of $A$ containing $I$, which may be empty.
We consider classes $\K$ of countable $\L$-structures.
Call $\K$ {\em unbounded} if every $A \in \mathbf{K}$ has a proper extension $B \supsetneq A$ in $\mathbf{K}$, and say $\K$ has
{\em disjoint amalgamation} if for all $A, B, C \in \mathbf{K}$ with $A \subseteq B, C$ and $B \cap C = A$, there is some $D \in \mathbf{K}$ with $B, C \subseteq D$. 
%
%
%
}
\end{Definition}  

\begin{Definition}  \label{suitabledef}
{\em  A class $\K$ is {\em strongly suitable} if

\begin{enumerate}
\item $\mathbf{K}$ is closed under isomorphism;
\item  For every $A\in \K$, 
 $X^A$ is finite and {\em generates $A$}, i.e., $A = \mbox{cl}^A(X^A)$;
 \item  $\K$ is unbounded; 
 \item  There is (at least one) $A\in \K$ with $X^A=\emptyset$; and
\item $\mathbf{K}$ has disjoint amalgamation.
\end{enumerate}
}
\end{Definition}

Clearly, $\K$ strongly suitable implies $\K$ suitable.  In particular, every structure in $\K$ is countable as it is generated by a finite set  in a countable language.

%

%

\begin{Theorem}\label{Suitable}
If $\mathbf{K}$ is strongly  suitable then there is some   $\mathbf{K}$-limit $M$ admitting an equivalence relation $E$ on $X^M$ 
with infinitely many classes, such that each $h \in \mbox{Sym}(X^M/E)$ lifts to an automorphism of $M$. 
\end{Theorem}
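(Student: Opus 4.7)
The strategy is to form an enriched class $\K^{*}$ of pairs $(A, E^A)$ consisting of an element $A \in \K$ together with an equivalence relation $E^A$ on $X^A$, and then build $(M, E)$ as a Fra\"iss\'e-style limit of $\K^*$. Once $(M,E)$ is built and satisfies sufficient homogeneity in $\K^*$, any $h \in \mbox{Sym}(X^M/E)$ lifts to an automorphism of $M$ via a back-and-forth argument.

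First, I would verify that $\K^*$ inherits the necessary closure properties from $\K$. A base $(A_\emptyset, \emptyset) \in \K^*$ exists by condition (4) of strong suitability. For disjoint amalgamation in $\K^*$: given $(A, E^A) \subseteq (B, E^B), (C, E^C)$ in $\K^*$ with $B \cap C = A$, we first disjointly amalgamate $B, C$ over $A$ in $\K$ to obtain some $D$; on $X^B \cup X^C$, the union $E^B \cup E^C$ is already an equivalence relation (since $X^B \cap X^C = X^A$ and both restrict to $E^A$ on $X^A$), which we then extend freely to $X^D$ by placing any extra $X$-elements into fresh $E$-classes. Moreover, $\K^*$ is $X$-growing, since we can use $X$-growing in $\K$ and assign the new $X$-element to either a fresh class or an existing one as needed.

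Second, by a standard book-keeping argument, I would construct a chain $(A_n, E_n)_{n \in \omega}$ in $\K^*$ whose union $M^* = (M, E)$ satisfies: (a) $X^M/E$ is infinite (by creating new $E$-classes at infinitely many stages), (b) each $E$-class is infinite (by enlarging each existing class at infinitely many stages), and (c) $M^*$ is ultrahomogeneous in $\K^*$, meaning that any $\K^*$-isomorphism between two finite sub-objects of $M^*$ extends to an automorphism of $M^*$. The enumeration of tasks and the use of disjoint amalgamation follow the usual Fra\"iss\'e pattern.

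Finally, given $h \in \mbox{Sym}(X^M/E)$, I would build $\sigma \in \mbox{Aut}(M)$ lifting $h$ by back-and-forth. Since each $A_n$ is generated by $X^{A_n}$ in $\K$, $M$ is generated by $X^M$, and so it suffices to define $\sigma$ on $X^M$ in a way that is consistent on all finitely generated sub-objects. At each step we maintain a finite partial $\K^*$-isomorphism $\sigma_k$ between sub-objects of $M^*$ whose induced action on touched $E$-classes agrees with a restriction of $h$. To extend: take the next element $x$ in an enumeration of $X^M$; by infiniteness of the class $h(x/E)$ together with ultrahomogeneity of $M^*$, we can find $y \in X^M$ with $y/E = h(x/E)$ such that $\sigma_k \cup \{(x, y)\}$ extends to a $\K^*$-isomorphism of the generated sub-objects. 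Alternating domain and range extensions yields a total $\sigma \in \mbox{Aut}(M)$ with $\sigma/E = h$.

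The main technical obstacle is checking that the amalgamation and extension properties in $\K^*$ are robust enough to achieve (a), (b), and (c) simultaneously --- i.e., that imposing the equivalence relation on $X$ does not obstruct the amalgamation process inherited from $\K$. Once the Fra\"iss\'e construction is in place, both the ultrahomogeneity property and the back-and-forth lifting step are routine.
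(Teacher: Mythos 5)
Your overall architecture matches the paper's: form the enriched class $\mathbf{K}_*$ of $\mathbf{K}$-structures equipped with an equivalence relation on $X$, build a union of a chain from $\mathbf{K}_*$, and lift $h$ by a back-and-forth. The amalgamation check for $\mathbf{K}_*$ is also as in the paper. But the forth step of your back-and-forth has a genuine gap.

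You maintain a finite $\mathbf{K}_*$-isomorphism $\sigma_k: A \to A'$ with $\sigma_k/E \subseteq h$, and to extend to a new $x \in X^M$ you assert that ``by infiniteness of the class $h(x/E)$ together with ultrahomogeneity of $M^*$'' you can find $y$ in the class $h(x/E)$ with $\sigma_k \cup \{(x,y)\}$ extending to a $\mathbf{K}_*$-isomorphism. This does not follow. Ultrahomogeneity gives you, for any one-point $\mathbf{K}_*$-extension $B = \langle A, x\rangle$, \emph{some} realization $\langle A', y'\rangle \cong_{A'} B$ in $M^*$; but when $x$ is $E$-unrelated to $X^A$, the $\mathbf{K}_*$-isomorphism type of $B$ over $A$ only forces $y'$ to lie in an $E$-class not meeting $X^{A'}$ --- it does not pin down \emph{which} such class. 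There is no reason the realizing $y'$ lands in $h(x/E)$, nor that some element of $h(x/E)$ realizes the right $\L$-type over $A'$: the $E$-class $h(x/E)$ may have been populated during the construction entirely by elements realizing other extension types, and your conditions (a), (b), (c) do not preclude this. Infiniteness of the class does not help, since the elements in it can all have the wrong $\L$-type over $A'$. Ultrahomogeneity only lets you move between two configurations \emph{already known to be $\mathbf{K}_*$-isomorphic}; it does not produce a witness in a prescribed $E$-class.

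The paper resolves exactly this issue by building a stronger saturation property directly into the chain: the notion of a $B$-big extension $C$, which requires that for \emph{every} embedding $f: A \to B$ and \emph{every} $h \in \mbox{Sym}(X^B/E)$ extending $f/E$, there is an embedding $g: B \to C$ extending $f$ with $g/E = h$. The explicit control over $g/E$ (established via the paper's Lemmas on amalgamating along a permissible bijection of $E$-classes) is precisely what makes the forth step go through; it is strictly stronger than ultrahomogeneity. To repair your argument you would need to replace your item (c) with an analogue of $B$-bigness, i.e.\ build into the chain that each finite partial isomorphism extends while realizing any prescribed permutation of the finitely many $E$-classes in sight, and prove the corresponding amalgamation lemma allowing one to ``twist'' the class identification by a permissible bijection rather than always amalgamating freely.

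Incidentally, your property (b) (each $E$-class infinite) is neither used by the paper's proof nor sufficient for yours; what matters is control over which class a new witness lands in, not how many elements each class has.
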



We note that with a little extra work, one can also arrange that the $\mathbf{K}$-limit $M$ is homogeneous, i.e. for all $A \subseteq M$ with $A \in \mathbf{K}$, every embedding $f: A \to M$ extends to an automorphism of $M$. But we won't need this.

\begin{proof} Suppose $\mathbf{K}$ is a strongly suitable class of $\L$-structures.
Let $\L_*:=\L\cup\{E\}$, where $E$ is a new binary relation symbol and let
  $\mathbf{K}_*$ consist of all expansions $A=(A_0,E)$ with $A_0\in\K$ and $E$ interpreted as an equivalence relation on $X^{A}$.  
  


%


\begin{Lemma} \label{zero}
	Suppose $A' \subseteq B, B'$ are in $\mathbf{K}_*$ with $B \cap B' = A'$. Suppose $g: B \cong B'$ is an isomorphism (which need not fix $A'$). Suppose $h \in \mbox{Sym}(X^B/E^B)$ satisfies that for all $a \in A'$, $h(g^{-1}(a)/E^B) = a/E^B$. Then there is some amalgam $C$ of $B$ and $B'$ over $A$, such that if we compute $g/E$ in $C$ then we get $g/E = h$.
\end{Lemma}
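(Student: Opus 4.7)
The plan is to use disjoint amalgamation in $\K$ to produce an underlying $\K$-structure $C_0$, and then cook up an equivalence relation on top of $C_0$ using Lemma~\ref{permit}; the compatibility hypothesis on $h$ and $g$ is designed exactly to make the required amalgamating bijection permissible. First I would apply disjoint amalgamation for $\K$ to the $\L$-reducts of $B$ and $B'$ over $A'$, obtaining some $C_0 \in \K$ with $B, B' \subseteq C_0$; write $Y := X^{C_0} \setminus (X^B \cup X^{B'})$ for the (possibly empty) set of ``extra'' $X$-elements.

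Next I would construct a permissible bijection $\ell : X^B/E^B \to X^{B'}/E^{B'}$. Since $g : B \cong B'$ is an isomorphism in $\K_*$, it respects $E$ and so induces a bijection $g/E : X^B/E^B \to X^{B'}/E^{B'}$. Set
\[
\ell := (g/E) \circ h^{-1},
\]
a bijection $X^B/E^B \to X^{B'}/E^{B'}$. To check permissibility, fix $a \in X^{A'} = X^B \cap X^{B'}$. The hypothesis $h(g^{-1}(a)/E^B) = a/E^B$ rearranges to $h^{-1}(a/E^B) = g^{-1}(a)/E^B$, and hence
\[
\ell(a/E^B) = (g/E)\bigl(g^{-1}(a)/E^B\bigr) = a/E^{B'},
\]
as required. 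Lemma~\ref{permit} now produces an equivalence relation $E_\ell$ on $X^B \cup X^{B'}$ extending $E^B \cup E^{B'}$. I would then extend $E_\ell$ to an equivalence relation $E^C$ on all of $X^{C_0}$ by putting each element of $Y$ into its own singleton class, and set $C := (C_0, E^C)$, which lies in $\K_*$.

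Finally I would verify that $g/E$, computed in $C$, equals $h$. Because $E^C$ restricts to $E^B$ on $X^B$, the natural map $X^B/E^B \hookrightarrow X^C/E^C$ is injective, so it makes sense to compare $g/E^C$ (as a partial map on $X^C/E^C$) with $h$. Given $x \in X^B$, pick any $y \in X^B$ with $y/E^B = h(x/E^B)$; then by definition of $\ell$ we have $\ell(y/E^B) = (g/E)(h^{-1}(h(x/E^B))) = g(x)/E^{B'}$, so by construction of $E_\ell$ the elements $y$ and $g(x)$ are $E^C$-equivalent. Hence $g(x)/E^C = h(x/E^B)$ as classes in $C$, proving $g/E = h$. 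The only subtle step is the permissibility of $\ell$, and this is precisely where the compatibility hypothesis between $h$ and $g$ on $A'$ is consumed; without it, the would-be amalgamated equivalence relation would fail to extend $E^B$ and $E^{B'}$ coherently.
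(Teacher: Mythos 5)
Your proposal is correct and takes essentially the same approach as the paper: apply disjoint amalgamation in $\K$ to get $C_0$, then use Lemma~\ref{permit} with the bijection $\ell = (g/E)\circ h^{-1}$ (which is exactly the paper's $\ell$, written implicitly there via $\ell(h(x/E^B)) = g(x)/E^{B'}$), checking permissibility via the compatibility hypothesis on $h$ and $g$ over $A'$. Your write-up is slightly more explicit than the paper's in two places — choosing singleton classes for the new $X$-elements (the paper just says any extension of $E^*$ works) and spelling out the final verification that $g/E = h$ in $C$ — but the argument is the same.
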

\begin{proof}
Let $B_0, B'_0, A'_0$ be the reducts of $B, B', A'$ to $\mathbf{K}$. 

By disjoint amalgamation in $\mathbf{K}$, we can find $C_0 \in \mathbf{K}$ with $B_0, B'_0 \subseteq C_0$. 
Now, to obtain an amalgam $C$ of $B$ and $B'$ over $A'$, it is enough to specify $E^C$ amalgamating $E^B$ and $E^{B'}$.  We obtain $E^C$ in two steps; first we find the (unique) equivalence relation
$E^*$ on $X^B\cup X^{B'}$ extending $E^B\cup E^{B'}$ satisfying $E^*(g(x),y)$ for every $x\in X^{B}$ and $y\in h(x/E^B)$.  Then, given $E^*$, any equivalence relation $E^C$ on $X^{C_0}$ extending $E^*$ will work.

By Lemma~\ref{permit}, finding such an equivalence relation $E^*$ on $X^B\cup X^{B'}$ amounts to showing that the bijection $\ell:X^B/E^B\rightarrow X^{B'}/E^{B'}$ given by
$\ell(h(x/E^B))=g(x)/E^{B'}$ is permissible. We verify this. $\ell$ is well-defined since $g$ preserves $E$. Choose any $a \in A'$; we need to show $\ell(a/E^B) = a/E^{B'}$. We have $a/E^B = h(g^{-1}(a)/E^B)$ by hypothesis. So by definition of $\ell$, we have $\ell(a/E^B) = g(g^{-1}(a))/E^{B'} = a/E^{B'}$ as desired. \qed
\end{proof}

Rephrasing:
\begin{Lemma}  \label{one}
Suppose $A \subseteq B$ are in $\mathbf{K}_*$ and $f: A \to B$ is an $\L_*$-embedding and $h \in \mbox{Sym}(X^B/E)$ extends $f/E$. 
Then we can find $C \supseteq B$ in $\mathbf{K}_*$ and an $\L_*$-embedding $g: B \to C$ extending $f$, with $g/E = h$.
\end{Lemma}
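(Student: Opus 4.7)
The plan is to derive this statement directly from Lemma~\ref{zero} by manufacturing a second copy of $B$ to play the role of $B'$ there. Set $A' := f(A) \subseteq B$, a substructure of $B$ in $\mathbf{K}_*$. Now take a fresh isomorphic copy $B'$ of $B$, arranged so that $B \cap B' = A'$, together with an isomorphism $\psi \colon B \to B'$ chosen to extend $f$: for $a \in A$ put $\psi(a) := f(a) \in A'$, and bijectively extend across $B \setminus A$ into $B' \setminus A'$. This is possible because $f \colon A \to A'$ is already an $\L_*$-isomorphism, so we can transport the rest of the $\L_*$-structure on $B$ across.

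Next, apply Lemma~\ref{zero} with the roles there taken to be $A' = A'$, $B = B$, $B' = B'$, $g = \psi$, and $h = h$. The only hypothesis requiring verification is that for every $a \in A'$ one has $h(\psi^{-1}(a)/E^B) = a/E^B$. But $\psi^{-1}(a) = f^{-1}(a) \in A$, so this reduces to $h(f^{-1}(a)/E^B) = a/E^B$, which is exactly the assertion that $h$ extends $f/E$. Lemma~\ref{zero} then supplies an amalgam $C \in \mathbf{K}_*$ of $B$ and $B'$ over $A'$, inside which $\psi/E = h$. In particular $B \subseteq C$, and $g := \psi$ viewed as an $\L_*$-embedding $B \to C$ extends $f$ by construction and satisfies $g/E = h$, as required.

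There is no real obstacle here; as the paper's own ``Rephrasing:'' indicates, this lemma is a convenient repackaging of Lemma~\ref{zero}. The only point requiring mild care is the choice of $\psi$: by insisting $\psi|_A = f$ we simultaneously ensure that in the conclusion $g = \psi$ automatically extends $f$, and that the compatibility hypothesis of Lemma~\ref{zero} translates cleanly into the hypothesis that $h$ extends $f/E$.
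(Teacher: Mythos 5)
Your proof is correct and follows essentially the same route as the paper's: define $A' = f[A]$, take a disjoint copy $B'$ of $B$ with $B \cap B' = A'$ together with an isomorphism extending $f$, verify that $h$ satisfies the compatibility condition of Lemma~\ref{zero}, and apply that lemma. The only difference is that you spell out in slightly more detail why such a copy $B'$ and isomorphism $\psi$ exist, which the paper leaves implicit.
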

\begin{proof}
Write $A' = f[A]$. Choose $B' \in \mathbf{K}_*$ with $A' \subseteq B'$ and $B' \cap B = A'$, such that there is an $\L_*$-isomorphism $g: B \cong B'$ extending $f$. 
It is enough to show the hypotheses of the previous lemma are met. For this it suffices to show that for all $a \in A'$, $h(g^{-1}(a)/E^B) = a/E^B$. Note $g^{-1}(a) = f^{-1}(a)$; since $h$ extends $f/E$, we have $h(g^{-1}(a)/E^B) = h(f^{-1}(a)/E^B) = f(f^{-1}(a))/E^B=a/E^B$ as desired.
%
%
\qed
\end{proof}

\begin{Lemma}  \label{two}  For every $A\in\K_*$ there is $B\in \K_*$ with $B\supseteq A$ such that $X^B$ has at least one new $E$-class.
\end{Lemma}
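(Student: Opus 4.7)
My plan is as follows. Given $A\in\K_*$ with $\L$-reduct $A_0\in\K$, I would first invoke clause~(3) of strong suitability (namely, $\K$ is unbounded) to obtain a proper extension $B_0\supsetneq A_0$ inside $\K$.

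The first observation to establish is that $X^{B_0}\supsetneq X^{A_0}$. This follows from clause~(2) of strong suitability: since $X^{B_0}$ generates $B_0$, if we had $X^{B_0}=X^{A_0}$ then $B_0=\mbox{cl}^{B_0}(X^{B_0})=\mbox{cl}^{B_0}(X^{A_0})$, and the right hand side equals $A_0$ because $A_0$ is an $\L$-substructure of $B_0$ with $A_0=\mbox{cl}^{A_0}(X^{A_0})$. This would contradict properness of the extension, so necessarily new $X$-points appear.

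Now pick any $x_0\in X^{B_0}\setminus X^{A_0}$. I would define an equivalence relation $E^B$ on the finite set $X^{B_0}$ by taking $E^B$ to agree with $E^A$ on $X^{A_0}\times X^{A_0}$ and by placing all points of $X^{B_0}\setminus X^{A_0}$ into a single fresh class disjoint from every $E^A$-class (making each new point its own singleton class would work equally well). Setting $B:=(B_0,E^B)$, we have $B\in\K_*$, and $A$ is an $\L_*$-substructure of $B$ because by construction $E^B\cap(X^{A_0}\times X^{A_0})=E^A$. The class $x_0/E^B$ is then the required new $E$-class.

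There is no substantive obstacle here. The only subtle point is the first observation, that new $X$-points come for free from combining the generation axiom with unboundedness; once that is in hand, cooking up an extended equivalence relation with a brand-new class is completely routine.
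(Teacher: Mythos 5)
Your proof is correct and follows essentially the same approach as the paper's: use unboundedness to obtain a proper extension $B_0 \supsetneq A_0$ in $\K$, use the generation clause to conclude $X^{B_0} \supsetneq X^{A_0}$, and then extend $E^A$ arbitrarily so as to create at least one new class. The only difference is that you spell out the generation argument in slightly more detail than the paper does.
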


\begin{proof}  As $\K$ is suitable, choose $B_0\in\K$ properly extending the $L$-reduct $A_0$ of $A$.  As $A_0=cl^{A_0}(X^{A})$ and $B_0=\cl^{B_0}(X^B)$, it follows that
$X^B$ properly extends $X^A$.  Choose any equivalence relation $E^B$ on $X^B$ extending $E^A$ with at least one new class.  Then $B=(B_0,E^B)$ works.
\qed
\end{proof}


%

\begin{Definition}  {\em
Suppose $B \subseteq C$ are in $\mathbf{K}_*$. Then say that $C$ is $B$-big if $X^C/E^C \supsetneq X^B/E^B$
 and for all $A \subseteq B$ in $\mathbf{K}_*$, for all embeddings $f: A \to B$ and for all $h \in \mbox{Sym}(X^B/E)$ extending $f/E$, there is an embedding $g: B \to C$ extending $f$ with $g/E = h$. 
}
\end{Definition}

\begin{Lemma}
Suppose $B \in \mathbf{K}_*$. Then there is $C \supseteq B$ in $\mathbf{K}_*$ which is $B$-big.
\end{Lemma}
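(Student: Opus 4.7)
The plan is to construct $C$ as the end of a finite chain $B = C_0 \subseteq C_1 \subseteq \cdots \subseteq C_k \subseteq C_{k+1} = C$ in $\mathbf{K}_*$, where each intermediate step handles one of the triples $(A, f, h)$ relevant to $B$-bigness via Lemma~\ref{one}, and the final step uses Lemma~\ref{two} to guarantee the growth $X^C / E \supsetneq X^B / E$.

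The crucial observation is that the set of relevant triples $(A, f, h)$ is finite. Because $\mathbf{K}$ is strongly suitable, $X^B$ is finite; any substructure $A \in \mathbf{K}_*$ of $B$ is determined by its generating set $X^A \subseteq X^B$, giving finitely many choices of $A$. Each embedding $f : A \to B$ is determined by its action on $X^A$, which maps into the finite set $X^B$, so for each $A$ there are only finitely many $f$; and $\mbox{Sym}(X^B/E)$ is a finite group. Enumerate all such triples as $(A_i, f_i, h_i)_{i < k}$.

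At stage $i$ of the construction I would extend $h_i$ to a permutation $\tilde h_i$ of $X^{C_i}/E$ by letting $\tilde h_i$ agree with $h_i$ on $X^B/E$ and be the identity on $X^{C_i}/E \setminus X^B/E$. Since $X^{A_i}/E \subseteq X^B/E$ and $h_i$ already extends $f_i/E$, so does $\tilde h_i$; thus Lemma~\ref{one} applies to $A_i \subseteq C_i$ with $f_i$ and $\tilde h_i$ and produces $C_{i+1} \supseteq C_i$ in $\mathbf{K}_*$ together with an embedding $g^*_i : C_i \to C_{i+1}$ extending $f_i$ with $g^*_i / E = \tilde h_i$; restricting to $B$ yields an embedding $g_i : B \to C_{i+1}$ extending $f_i$ with $g_i / E = h_i$. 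Once every triple has been processed, one application of Lemma~\ref{two} supplies $C := C_{k+1} \supseteq C_k$ carrying a new $E$-class in $X$, and composing each $g_i$ with the inclusion $C_{i+1} \hookrightarrow C$ witnesses $B$-bigness.

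The main (mild) obstacle is conceptual rather than technical: Lemma~\ref{one} demands that $h$ live on the ambient structure ($C_i$), whereas our $h_i$ lives only on $X^B/E$. The extension-by-identity trick dispatches this cleanly, and the whole argument works precisely because finiteness of $X^B$ makes the total number of triples finite, so a finite iteration suffices; in the merely suitable setting (where $X^B$ could be infinite) one would presumably need a transfinite back-and-forth of the sort already implicit in the hypotheses of Theorem~\ref{GeneralSuitable}.
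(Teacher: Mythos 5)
Your proof is correct and takes essentially the same approach as the paper's, which likewise observes that $B$-bigness imposes only finitely many constraints and then builds a finite chain in $\mathbf{K}_*$ using Lemmas~\ref{one} and~\ref{two}. The only detail the paper leaves implicit is precisely the one you address: each application of Lemma~\ref{one} needs its permutation $h$ to act on $X^{C_i}/E$ rather than on $X^B/E$, and your extend-by-the-identity trick handles this cleanly (it is legitimate because $B$ is an $\L_*$-substructure of each $C_i$, so $E^{C_i}$ restricts to $E^B$ on $X^B$ and no $E$-classes merge as the chain grows, making $\tilde h_i$ a genuine element of $\mathrm{Sym}(X^{C_i}/E)$ extending $f_i/E$).
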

\begin{proof}
For any $B\in \K_*$, being $B$-big describes finitely many constraints on an extension. 
We can handle each of these constraints by an application of either  Lemma~\ref{one} or  Lemma~\ref{two}.
Thus, via disjoint amalgamation, we can construct a finite chain in $\K^*$ to satisfy  them all. \qed
\end{proof}

To finish the proof of Theorem~\ref{Suitable},  recursively build a sequence $(A_n: n < \omega)$ from $\mathbf{K}_*$ such that $X^{A_0}=\emptyset$
 and each $A_{n+1}$ is $A_n$ big. Let $M_* = \bigcup_n A_n$, a $\mathbf{K}_*$-limit. and let $M$ be the reduct to $\mathbf{K}$. Write $E = E^{M_*}$. We claim $(M, E)$ works. Clearly $E$ has infinitely many classes; we just need to verify the lifting property. Let $h \in \mbox{Sym}(X^M/E)$ be given. Let $\mathcal{F}_h$ be the set of all triples $(A, A', f)$ where $A, A' \subseteq M_*$ are in $\mathbf{K}_*$, and $f: A \cong A'$ satisfies $f/E \subseteq h$.   We verify that $\mathcal{F}_h$ is a back-and-forth system of $L_*$-isomorphisms.
Clearly,  $\mathcal{F}_h$ is nonempty since $(A_0,A_0, \mbox{id}) \in \mathcal{F}_h$. By symmetry, it suffices to check that $\mathcal{F}_h$ is a forth system. So suppose $(A, A', f) \in \mathcal{F}_h$ and $A \subseteq B$; we want to find $(B', f')$ with $(B, B', f') \in \mathcal{F}_h$ extending $(A, A', f)$. Choose $n$ large enough so that $A, A', B \subseteq A_n$ and such that for all $x \in X^B$, $h(x/E)$ has a representative in $A_n$. Let $h'$ denote the restriction of $h$ to $X^B/E$.  Since $A_n$ contains a representative of $h'(x/E)$ for all $x\in X^B$,
 $h'$ is a finite partial injection from $X^{A_n}/E^{A_n}$ to itself. Let $h''$ be an arbitrary extension of $h'$ to $\mbox{Sym}(X^{A_n}/E^{A_n})$. Since $A_{n+1}$ is $A_n$-big, we can find some 
 $L_*$-isomorphism $g: A_n \to A_{n+1}$ extending $f$ with $g/E = h''$.   Let $f':=g\restriction B$ and put $B':=g[B]$.
Then $(B,B',f')\in\mathcal{F}_h$ and extends $(A,A',f)$. \qed
\end{proof}

\subsection{From strongly suitable to suitable}


We now finish the proof Theorem~\ref{GeneralSuitable}. Let $\mathbf{K}$ be suitable. By adding a dummy sort containing just a named constant, we can suppose that every structure in $\mathbf{K}$ has nonempty domain.

	Let $\mathcal{L}'$ be the two-sorted language with sorts $U$ and $V$; let it contain $\mathcal{L}$, considered as operating on the $U$-sort, and additionally, let it contain a binary relation symbol $R \subseteq U \times V$, and also  $n$-ary functions $f_{n, m}: V^n \to U$ for each $n, m < \omega$ (so when $n = 0$ we have countably many constants for elements of $U$). Let $\mathbf{K}'$ consist of all structures $A$ satisfying:

\begin{itemize}
\item  $V$ is finite; 
	\item $R^A$ is (the graph of) a surjective function from $X^A$ onto $V$;
	\item $A = \mbox{cl}^A(V)$;
	\item The reduct of $A$ to $U$ is an element of  $\mathbf{K}$.
\end{itemize}

Easily, $(\mathbf{K}', V)$ is strongly suitable.   Let $(M', E')$ be as given by Theorem~\ref{Suitable}.
Let $M$ be the reduct of $M'$ to $U$ and let $E$ be the equivalence relation on $X^M$ given by: $x E y$ if and only if $R(x) E' R(y)$. This works.

\end{document}